\documentclass[12pt]{amsart}
  \usepackage{eucal}
\usepackage{mathrsfs}
\usepackage{amsmath}
\usepackage{graphicx}
\usepackage{amssymb}
\usepackage{amscd}
\usepackage{color}
\usepackage{xcolor}
\usepackage[percent]{overpic}
\usepackage{tikz}
\usepackage{tikz-cd}
\usetikzlibrary{arrows.meta}
\usetikzlibrary{math,angles,intersections,calc, decorations.pathreplacing, shapes.misc}
\usepackage[all,cmtip]{xy}
\usepackage{hyperref}
\usepackage{blkarray}

 \topmargin=0pt
 \oddsidemargin=0pt
 \evensidemargin=0pt
 \textwidth=16cm
 \textheight=22cm
 \raggedbottom
 \parskip=0.12cm

\tikzstyle{vecArrow} = [thick, decoration={markings,mark=at position
   1 with {\arrow[semithick]{open triangle 60}}},
   double distance=1.4pt, shorten >= 5.5pt,
   preaction = {decorate},
   postaction = {draw,line width=1.4pt, white,shorten >= 4.5pt}]
\tikzstyle{innerWhite} = [semithick, white,line width=1.4pt, shorten >= 4.5pt]

\newtheorem{thm}{Theorem}[section]
\newtheorem{prop}[thm]{Proposition}
\newtheorem{lemma}[thm]{Lemma}

\newtheorem{cor}[thm]{Corollary}

\theoremstyle{definition}

\newtheorem{problem}[thm]{Problem}

\newtheorem{definition}[thm]{Definition}

\newtheorem{remark}[thm]{Remark}

\newcommand{\cF}{\mathcal{F}}
\newcommand{\cG}{\mathcal{G}}
\newcommand{\cH}{\mathcal{H}}

\newcommand{\cL}{\mathcal{L}}

\newcommand{\cN}{\mathcal{N}}
\newcommand{\cS}{\mathcal{S}}
\newcommand{\cT}{\mathcal{T}}
\newcommand{\cU}{\mathcal{U}}
\newcommand{\cV}{\mathcal{V}}

\newcommand{\cMod}{\mathcal{M}\mathrm{od}}

\newcommand{\bk}{\mathbf{k}}

\newcommand{\bR}{\mathbf{R}}
\newcommand{\bZ}{\mathbf{Z}}
\newcommand{\bL}{\mathbf{L}}

\newcommand{\bC}{\mathbf{C}}

\newcommand{\bX}{\mathbf{X}}
\newcommand{\bY}{\mathbf{Y}}
\newcommand{\bW}{\mathbf{W}}
\newcommand{\bS}{\mathbf{S}}

\newcommand{\frj}{\mathfrak{j}}
\newcommand{\fri}{\mathfrak{i}}
\newcommand{\frf}{\mathfrak{f}}
\newcommand{\frg}{\mathfrak{g}}

\newcommand{\Supp}{\mathrm{Supp}}
\newcommand{\Comm}{\mathrm{Comm}}
\newcommand{\Cone}{\mathrm{Cone}}
\newcommand{\rank}{\mathrm{rank}}

\newcommand{\Hom}{\mathrm{Hom}}
\newcommand{\Maps}{\mathrm{Maps}}

\newcommand{\Aut}{\mathrm{Aut}}

\newcommand{\LagGr}{\mathrm{LagGr}}
\newcommand{\cLagGr}{\mathcal{L}\mathsf{agGr}}

\newcommand{\Loc}{\mathrm{Loc}}
\newcommand{\Mod}{\mathrm{Mod}}

\newcommand{\CAlg}{\mathrm{CAlg}}

\newcommand{\Fl}{\mathrm{F}\ell}

\newcommand{\Gr}{\mathrm{Gr}}

\newcommand{\pr}{\mathrm{pr}}
\newcommand{\proj}{\mathrm{proj}}

\newcommand{\Pic}{\mathrm{Pic}}

\newcommand{\Fin}{\mathrm{Fin}}

\newcommand{\Corr}{\mathrm{Corr}}

\newcommand{\Seq}{\mathrm{Seq}}

\newcommand{\Alg}{\mathrm{Alg}}
\newcommand{\Slch}{\mathrm{S}_{\mathrm{LCH}}}

\newcommand{\dagg}{\dagger}
\newcommand{\rightlax}{\mathrm{right}\text{-}\mathrm{lax}}

\newcommand{\twoop}{2\text{-}\mathrm{op}}

\newcommand{\bCorr}{\mathbf{Corr}}
\newcommand{\Gray}{\text{\circled{$\star$}}}

\newcommand{\fib}{\mathsf{fib}}
\newcommand{\openmap}{\mathsf{open}}
\newcommand{\all}{\mathsf{all}}

\newcommand{\QHam}{\mathsf{QHam}}
\newcommand{\propmap}{\mathsf{prop}}
\newcommand{\dgnl}{\mathrm{dgnl}}

\newcommand{\fC}{\mathsf{C}}
\newcommand{\fD}{\mathsf{D}}
\newcommand{\sfB}{\mathsf{B}}
\newcommand{\sfOmega}{\mathsf{\Omega}}

\newcommand{\lng}{\langle}
\newcommand{\rng}{\rangle}

\newcommand{\cY}{\mathcal{Y}}
\newcommand{\cC}{\mathcal{C}}

\newcommand{\cB}{\mathcal{B}}
\newcommand{\cP}{\mathcal{P}}
\newcommand{\cQ}{\mathcal{Q}}

\newcommand{\Sp}{\mathrm{Sp}}
\newcommand{\cLoc}{\cL\mathrm{oc}}

\newcommand{\Fun}{\mathrm{Fun}}

\newcommand{\bq}{\mathbf{q}}
\newcommand{\bp}{\mathbf{p}}

\newcommand{\Open}{\mathsf{Open}}

\newcommand{\Spc}{\mathcal{S}\mathrm{pc}}

\newcommand{\alg}{\mathsf{alg}}
\newcommand{\sfmod}{\mathsf{mod}}
\newcommand{\Shv}{\mathrm{Shv}}
\newcommand{\coShv}{\mathrm{coShv}}
\newcommand{\OneCat}{1\text{-}\mathrm{Cat}}
\newcommand{\TwoCat}{2\text{-}\mathrm{Cat}}
\newcommand{\Spectr}{\mathsf{Spectr}}
\newcommand{\ShvSp}{\mathrm{ShvSp}}

\newcommand{\PrstL}{\mathrm{Pr}_{\mathsf{st}}^\mathsf{L}}
\newcommand{\PrstR}{\mathrm{Pr}_{\mathsf{st}}^\mathsf{R}}
\newcommand{\bPrstL}{\mathbf{P}\mathrm{r}_{\mathsf{st}}^\mathsf{L}}
\newcommand{\bPrstR}{\mathbf{P}\mathrm{r}_{\mathsf{st}}^\mathsf{R}}

\newcommand{\bOneCat}{\textbf{1}\text{-}\mathbf{Cat}}

\newcommand{\leftlax}{\mathrm{left}\text{-}\mathrm{lax}}
\newcommand{\FT}{\mathsf{FT}}
\newcommand{\fj}{\mathfrak{j}}
\newcommand{\RKan}{\mathsf{RKan}}
\newcommand{\LKan}{\mathsf{LKan}}

\newcommand{\rel}{\mathsf{rel}}
\newcommand{\Sing}{\mathsf{Sing}}

\newcommand{\inert}{\mathsf{inert}}
\newcommand{\act}{\mathsf{active}}
\newcommand{\comp}{\mathsf{comp}}
\newcommand{\ord}{\mathsf{ord}}
\newcommand{\ConicOpen}{\mathsf{ConicOpen}}

\newcommand{\Path}{\mathcal{P}\mathrm{ath}}

\newcommand{\Bl}{\mathsf{Bl}}
\newcommand{\Gauss}{\mathsf{Gauss}}
\newcommand{\aff}{\mathsf{aff}}
\newcommand{\Graph}{\mathsf{Graph}}
\newcommand{\cShv}{\mathcal{S}\mathsf{hv}}
\newcommand{\bH}{\mathbb{H}}
\newcommand{\End}{\mathrm{End}}
\newcommand{\op}{\text{-op}}
\newcommand{\opincl}{\mathsf{opincl}}

\newcommand{\PcoShv}{\mathcal{P}\mathrm{coShv}}

\newcommand{\PrkL}{\mathrm{Pr}_\bk^{\mathsf{L}}}
\newcommand{\fG}{\mathfrak{G}}
\newcommand{\fF}{\mathfrak{F}}
\newcommand{\PShv}{\cP\Shv}

\newcommand{\bE}{\mathbb{E}}
\newcommand{\Seg}{\mathrm{Seg}}

\newcommand*\circled[1]{\tikz[baseline=(char.base)]{
            \node[shape=circle,draw,inner sep=0.05pt] (char) {#1};}}
            
   \tikzset{cross/.style={cross out, draw=black, fill=none, minimum size=2*(#1-\pgflinewidth), inner sep=0pt, outer sep=0pt}, cross/.default={3pt}}
            
\makeatletter
\newcommand{\xleftrightarrow}[2][]{\ext@arrow 3359\leftrightarrowfill@{#1}{#2}}
\newcommand{\xdashrightarrow}[2][]{\ext@arrow 0359\rightarrowfill@@{#1}{#2}}
\newcommand{\xdashleftarrow}[2][]{\ext@arrow 3095\leftarrowfill@@{#1}{#2}}
\newcommand{\xdashleftrightarrow}[2][]{\ext@arrow 3359\leftrightarrowfill@@{#1}{#2}}
\def\rightarrowfill@@{\arrowfill@@\relax\relbar\rightarrow}
\def\leftarrowfill@@{\arrowfill@@\leftarrow\relbar\relax}
\def\leftrightarrowfill@@{\arrowfill@@\leftarrow\relbar\rightarrow}
\def\arrowfill@@#1#2#3#4{%
  $\m@th\thickmuskip0mu\medmuskip\thickmuskip\thinmuskip\thickmuskip
   \relax#4#1
   \xleaders\hbox{$#4#2$}\hfill
   #3$%
}
\makeatother

\numberwithin{equation}{section}

\DeclareMathAlphabet{\mathpzc}{OT1}{pzc}{m}{it}

\begin{document}

\subjclass[2000]{}

\title{Microlocal sheaf categories and the $J$-homomorphism}
\author[X. Jin]{Xin Jin}
\address{Math Department,
Boston College,
Chestnut Hill, MA 02467.}
\email{xin.jin@bc.edu}

\maketitle

\begin{abstract}
Let $X$ be a smooth manifold and $\bk$ be a commutative (or at least $\bE_2$) ring spectrum. Given a smooth exact Lagrangian $L\hookrightarrow T^*X$, the microlocal sheaf theory (following Kashiwara--Schapira) naturally assigns a locally constant sheaf of categories on $L$ with fiber equivalent to the category of $\bk$-module spectra $\Mod(\bk)$. We show that the classifying map for the local system of categories factors through the stable Gauss map $L\rightarrow U/O$ and the delooping of the $J$-homomorphism $U/O\rightarrow B\Pic(\bS)$. As an application, combining with previous results of Guillermou \cite{Guillermou}, we recover a result of Abouzaid--Kragh \cite{AbKr} on the triviality of the composition $L\rightarrow U/O\rightarrow B\Pic(\bS)$, when $L$ is in addition compact. 
\end{abstract}

\tableofcontents

\section{Introduction}

Let $X$ be a smooth manifold and $\bk$ be a ring spectrum, either commutative or at least $\bE_2$. Given a smooth exact Lagrangian $L\hookrightarrow T^*X$, the microlocal sheaf theory (developed by Kashiwara--Schapira) naturally assigns a locally constant sheaf of categories on $L$ with fiber equivalent to the category of $\bk$-spectra $\Mod(\bk)$. It is a standard fact that such a locally constant sheaf of categories is determined (up to homotopy) by a classifying map
\begin{align}\label{eq: intro classify}
L\longrightarrow B\Aut_\bk(\Mod(\bk))\simeq B\Pic(\bk).
\end{align}
The goal of this paper is to give a solution to the following fundamental problem:
\begin{problem}\label{intro: problem}
Give a topological description of the classifying map for the sheaf of microlocal categories on $L$. 
\end{problem}

\begin{thm}[see Theorem \ref{thm: sec proof} for the precise statement]\label{thm: main intro}
The classifying map (\ref{eq: intro classify}) can be factored, up to homotopy, as 
\begin{align*}
L\overset{\gamma}{\longrightarrow} U/O\overset{BJ}{\longrightarrow}B\Pic(\bS)\longrightarrow B\Pic(\bk), 
\end{align*}
where $\gamma$ is the stable Gauss map, $BJ$ is the delooping of the $J$-homomorphism in stable homotopy theory, and the last map is the tautological one as the sphere spectrum $\bS$ is the initial object in the $\infty$-category of all ring spectra. 
\end{thm}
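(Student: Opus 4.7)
The plan is to reduce the classification of the microlocal sheaf of categories to a universal question over the stable Lagrangian Grassmannian, then identify the resulting universal map with $BJ$ by a Thom-spectrum / Maslov-index computation. Throughout, one uses the (by now standard) fact that the Kashiwara--Schapira microlocal category at a point $p\in L$ is determined, up to a canonical equivalence with $\Mod(\bk)$, by the first-order geometric data: the Lagrangian tangent plane $T_pL\subset T_p(T^*X)$. The content of Problem \ref{intro: problem} is to track \emph{how} this equivalence depends continuously on $p$.

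First I would set up the factorization through $U/O$. The pointwise choice of quantization of $L$ near $p$ depends on the position of $T_pL$ relative to a reference cotangent fiber, which stably is a point of $U/O$; this dependence is precisely what is recorded by the stable Gauss map $\gamma\colon L\to U/O$. To make this into a factorization of maps of spaces rather than just a compatible family of pointwise equivalences, I would build a universal microlocal sheaf of $\Mod(\bS)$-module categories over $U/O$ by considering the tautological Lagrangian bundle over the Lagrangian Grassmannian, passing to a limit, and applying the Kashiwara--Schapira specialization construction fiberwise. Functoriality of this construction under symplectic linear maps yields a locally constant sheaf of categories on $U/O$, hence a classifying map $\mu\colon U/O\to B\Pic(\bS)$. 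Naturality of microlocalization gives that the classifying map (\ref{eq: intro classify}) agrees with the composition $L\xrightarrow{\gamma}U/O\xrightarrow{\mu}B\Pic(\bS)\to B\Pic(\bk)$ up to homotopy.

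Next I would identify $\mu$ with $BJ$. A natural way to proceed is to express both maps as classifying maps of spherical fibrations. On the $J$-side, $BJ$ is by definition the delooping classifying the Thom spectrum functor $BO\to B\Pic(\bS)$; pulling back along $U/O\to BO$ gives the usual factorization through the stable normal bundle of the tautological Lagrangian. On the microlocal side, I would compute the local fiber of the universal microlocal category over $U/O$ as a twisted form of $\Mod(\bS)$, and observe that the twist is exactly the Thom spectrum of the tautological virtual bundle associated to $\gamma$ (this is the content of the standard dictionary between Maslov grading and the $J$-homomorphism). Comparing the two constructions — say by computing the induced maps on $\pi_n$ for $n\geq 0$, where the Maslov class in $\pi_1(U/O)\cong\bZ$ must map to the generator of $\pi_1 B\Pic(\bS)\cong\pi_0 GL_1(\bS)\cong\bZ/2$ via its mod-$2$ reduction, and checking higher-degree compatibility via the universal property of the connective cover — pins down $\mu\simeq BJ$.

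The main obstacle is the second step, namely the rigorous construction of a universal microlocal sheaf of categories over $U/O$ and the verification that it pulls back to the one of interest along $\gamma$. Microlocal sheaf theory is a priori set up on a fixed symplectic manifold, and promoting it to a family over the space $U/O$ requires some form of parametrized (or straightened) version of the Kashiwara--Schapira formalism; this is where the $\infty$-categorical bookkeeping becomes delicate and where most of the technical work of the paper is likely concentrated. Once this universality is in place, the identification with $BJ$ is essentially a computation forced by the $1$-connectivity of $U/O$ and the known homotopy groups of $B\Pic(\bS)$ in low degrees.
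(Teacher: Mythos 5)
Your proposal has the right coarse outline — factor through $U/O$ via the Gauss map, then identify a universal map with $BJ$ — but there are two genuine gaps, the second of which is fatal.

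First, the construction of a ``universal microlocal sheaf of categories over $U/O$'' is not carried out, and the paper in fact does \emph{not} take this route. There is no cotangent bundle containing the tautological Lagrangian bundle over $U/O$, so the Kashiwara--Schapira specialization has nothing to act on fiberwise; making sense of such a universal object would require essentially all of the machinery the paper builds anyway. The paper instead works directly on $L$: it shows that the space of (stabilized) Morse transformations for a germ of $L$ is, up to group completion, modeled by the pullback $\cP_L=\gamma^{-1}(\eta_{U/O})$ of the universal $\Omega(U/O)$-bundle, and then quantizes the Hamiltonian $\coprod_n BO(n)$-action to produce a direct equivalence $\mu\cShv_L(\cV;\bS)\simeq \Loc(\cP_L|_\cV;\bS)^{J\text{-equiv}}$ (Proposition \ref{prop: intro key}, proved via $\QHam_L(U/O)$). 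The factorization through $\gamma$ and $BJ$ falls out of this equivalence immediately; there is never a moment where one must compare two a priori unrelated maps $U/O\to B\Pic(\bS)$.

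Second, and more seriously, your method for pinning down the universal map $\mu\simeq BJ$ does not work. Two maps between spaces (or even between spectra) that induce the same homomorphisms on all homotopy groups need not be homotopic — obstruction theory sees $k$-invariants and indeterminacies that $\pi_*$ alone cannot detect — and there is no ``universal property of the connective cover'' that resolves this for maps $U/O\to B\Pic(\bS)$. You also misidentify the target homotopy group: $\pi_1B\Pic(\bS)\cong\pi_0\Pic(\bS)\cong\bZ$ (the grading/Maslov shift), not $\pi_0 GL_1(\bS)\cong\bZ/2$; the $\bZ/2$ (second relative Stiefel--Whitney class) appears only in $\pi_2 B\Pic(\bS)$. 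This conflation of $B\Pic(\bS)$ with $BGL_1(\bS)$ suggests the comparison would not even be set up at the right level. The hard and essential content of the theorem is producing the \emph{homotopy}, not merely checking low-degree obstructions, and the paper achieves this by exhibiting $\mu\cShv_L$ as $J$-equivariant local systems on $\cP_L$ in a manner compatible with restrictions — precisely the step your proposal defers to ``bookkeeping.''
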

The above answer was predicted in \cite{JiTr}, and here we give a full proof. We remark that for $\bk$ an ordinary ring, the classifying map factors through $U/O\rightarrow B\Pic(\bZ)\simeq B\bZ\times B^2(\bZ/2\bZ)$, and the obstruction classes given by (\ref{eq: intro classify}) for $\bk=\bZ$ are exactly the Maslov class and the second relative Stiefel--Whitney class. The case for an ordinary ring $\bk$ has been obtained in \cite{Guillermou}.

As a consequence of Theorem \ref{thm: sec proof}, combining with previous results of \cite{Guillermou} and \cite{JiTr}, we get a sheaf theoretic proof of a result of Abouzaid--Kragh, which was based on Floer homotopy types. 
\begin{cor}[Proposition 2.2 \cite{AbKr}]\label{cor: AbKr}
For any compact smooth exact Lagrangian $L\hookrightarrow T^*X$, the classifying map (\ref{eq: intro classify}) for $\bk=\bS$ is homotopically trivial. 
\end{cor}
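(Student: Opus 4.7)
The strategy is to combine Theorem \ref{thm: main intro} with Guillermou's quantization theorem, refined to the spectrum level via the framework of \cite{JiTr}. By Theorem \ref{thm: main intro}, the classifying map (\ref{eq: intro classify}) for $\bk = \bS$ factors, up to homotopy, as
\[
L \xrightarrow{\gamma} U/O \xrightarrow{BJ} B\Pic(\bS),
\]
so the corollary reduces to producing a null-homotopy of $BJ \circ \gamma$. Equivalently (reading Theorem \ref{thm: main intro} backwards), it reduces to exhibiting a global generator---a ``simple sheaf quantization''---of the sheaf of microlocal categories on $L$ with coefficients in the sphere spectrum $\bS$.

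First I would invoke the main result of \cite{Guillermou}: every compact exact Lagrangian $L \hookrightarrow T^*X$ admits a simple sheaf quantization with coefficients in an ordinary ring, namely a constructible sheaf on $X$ with microsupport $L$ whose microstalks are rank-one free modules. Applied to $\bk = \bZ$, this furnishes a global generator of the sheaf of microlocal categories and hence a null-homotopy of (\ref{eq: intro classify}) over $\bZ$. In conjunction with Theorem \ref{thm: main intro}, this recovers the familiar vanishing of the Maslov class and of the second relative Stiefel--Whitney class of $L$; but this is \emph{a priori} weaker than the desired statement, since the map $B\Pic(\bS) \to B\Pic(\bZ)$ is not injective on homotopy groups.

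The key step is to promote Guillermou's simple quantization from $\bk = \bZ$ to $\bk = \bS$. I would do this by re-running Guillermou's geometric argument---the doubling of the zero section along $L$, the Legendrian-isotopy invariance, and the gluing of the local simple sheaves into a global one---in the spectrum-enriched setting provided by \cite{JiTr}, where microlocalization and the associated categories of microsheaves are formulated coefficient-independently, compatibly with any $\bE_2$-ring spectrum. Executing this program produces a sheaf of $\bS$-module spectra on $X$ with microsupport $L$ and rank-one microstalks, i.e.\ a global generator of the sheaf of microlocal categories for $\bk = \bS$; this generator trivializes the locally constant sheaf of categories and yields the sought null-homotopy.

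The main obstacle I anticipate is precisely this spectrum-level execution of Guillermou's construction: his argument is geometric and combinatorial, built out of Legendrian moves and open-cover patching, and one must verify that each step admits a coherent spectrum-valued analogue under the formalism of \cite{JiTr}, with all simplicity and gluing conditions lifted from $\Mod(\bZ)$ to $\Mod(\bS)$ up to coherent homotopy. Once this compatibility is established, the conclusion is formal: the existence of the $\bS$-level simple quantization kills the classifying map (\ref{eq: intro classify}), and Theorem \ref{thm: main intro} reinterprets this as the null-homotopy of $BJ \circ \gamma$ asserted in \cite{AbKr}.
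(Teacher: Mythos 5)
Your high-level scaffolding is sound: Theorem \ref{thm: main intro} identifies the classifying map with $BJ\circ\gamma$, so establishing that the classifying map is null-homotopic (via a quantization argument) recovers the Abouzaid--Kragh statement. The gap lies in your "key step," where you propose to directly construct a rank-one simple sheaf quantization over $\bS$ by re-running Guillermou's argument. This is circular: the existence of a simple quantization with microstalk $\bS$ is \emph{equivalent} to the triviality of the classifying map, so a direct construction must contend with precisely the obstruction one is trying to kill. Guillermou himself does not proceed by a direct construction; he first builds a fully faithful quantization functor $\cQ_L:\Gamma(L,\mu\cShv_L)\to\Loc(X)$ (unobstructed, as it presumes no trivialization of $\mu\cShv_L$), and the step where untwistedness is extracted relies on the specific structure of $\Pic(\bZ)$ and does not transport formally to $\bS$.

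The paper's proof supplies the missing mechanism. Since $\mu\cShv_L\otimes\mu\cShv_L^\vee$ is canonically trivial, one obtains a twisted fully faithful functor $\cQ_L^{tw}:\Loc(L;\bS)\simeq\Gamma(L,\mu\cShv_L\otimes\mu\cShv_L^\vee)\to\Gamma(X,\psi_L^{-1}\mu\cShv_L^\vee)$, after minor modifications of $\cQ_L$ to fit the $!$-formalism. Triviality of $\mu\cShv_L$ then reduces to checking that $\cN:=\cQ_L^{tw}(\omega_L)$ is locally a compact object with endomorphism ring $\bS$. That computation is carried out by a path-space adjunction: one tensors against $\cL=(\eta_X)_!\omega_{\Path_*X}$, uses that the square formed by $\eta_L$, $\eta_X$, $\pi_L$ is homotopy Cartesian (since $\pi_L$ is a homotopy equivalence, a fact already established by Guillermou), and applies the adjunction $(\eta_X)_!\dashv\eta_X^!$ to transfer the endomorphism computation to the contractible space $\Path_*X$, where $\eta_X^{-1}\psi_L^{-1}\mu\cShv_L^\vee$ trivializes and the endomorphism ring of the costalk is seen to be $\bS$. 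This twisted-quantization-plus-dualizing-sheaf-plus-adjunction argument is the essential content your proposal would need to supply; "re-running Guillermou's geometric argument in the spectrum setting" considerably understates what is required.
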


\subsection{Motivations and applications}
In this subsection, we give a brief overview of microlocal sheaf approaches to symplectic topology, leading to motivations of Problem \ref{intro: problem} and applications of Theorem \ref{thm: main intro} as an answer to it. 

Microlocal sheaf theory was developed by Kashiwara--Schapira \cite{Kashiwara} in the 90s, and it has been substantially applied to the study of symplectic topology, especially making connections to Floer cohomology,  after the work of Nadler--Zaslow \cite{NZ}, \cite{Nadler2} and Tamarkin \cite{Tamarkin1}. 

For any topological space $X$ and a commutative ring $\bk$ (will be generalized to ring spectra later), let $\Shv(X;\bk)$ be the (large) dg-category (equivalently stable $\infty$-category) of sheaves of $\bk$-modules on $X$. A key notion in microlocal sheaf theory is the \emph{singular support} of a sheaf $\cF\in \Shv(X;\bk)$, denoted by $\SS(\cF)$, which is a closed conic subset in $\mathring{T}^*X=T^*X\backslash \{\text{zero-section}\}$, that collects all the covectors along which non-propagation of sections of $\cF$ occurs. A deep result of Kashiwara--Schapira is that the singular support of any sheaf is coisotropic, which is in some sense 
a mathematical shadow of the Heisenberg Uncertainty Principle that one can only observe the position and momentum of a particle along a coisotropic subvariety in the cotangent bundle, but not smaller than that. 

Among the coisotropic subvarieties in $T^*X$, the class of Lagrangian submanifolds, which is having the smallest possible dimension ($=\dim X$), has drawn particular interest in the study of symplectic topology, due to their nice intersection theory through Floer theory, simple moduli property by Arnold's Nearby Lagrangian conjecture for exact Lagrangians and the analysis for special Lagrangians, and easier quantization results (than other coisotropic varieties) by microlocal sheaves or holonomic $D$-modules. 

The approach of microlocal sheaf theory to the study of Lagrangian submanifolds (or more generally subvarieties) goes as follows. For any (exact) Lagrangian $L\subset T^*X$, there is a standard procedure to transform $L$ into a conic Lagrangian $\bL$ inside $T^{*,<0}(X\times \bR_t)$, where $T^{*,<0}(X\times \bR_t)$ is the open locus in $T^*(X\times \bR_t)$ consisting of covectors whose pairing with $\partial_t$ is strictly negative. Now one can study the category of sheaves with singular support contained in $\bL$ (more precisely some localization of it), denoted by $\Shv_{\bL}^{<0}(X\times \bR_t;\bk)$. This can be regarded as sheaf quantizations of $L$. Since it is invariant under Hamiltonian isotopies of $L$, the sheaf category is an interesting symplectic invariant of $L$. It is closely related to the Fukaya category of $T^*X$ (cf. \cite{Tamarkin1},  \cite{Guillermou}). 

The sheaf quantization has a generalization over ring spectra (cf. \cite{JiTr}). It is broadly expected that working over ring spectra and making connections to stable homotopy theory would help in understanding the topology of Lagrangian submanifolds, especially shed new light on Arnold's Nearby Lagrangian conjecture (cf. \cite{CJS}, \cite{AbKr}). 

The first step to understand $\Shv_{\bL}^{<0}(X\times \bR_t;\bk)$ is to understand the sheaves \emph{microlocally} along $L$. Namely, following the pioneering work of Kashiwara--Schapira, one can assign a sheaf of microlocal sheaf categories along $L$ (sometimes referred as Kashiwara--Schapira stack), denoted by $\mu\cShv_L$, whose sections over an open set $\cV\subset L$ form a certain sheaf category that only involves the local geometry of $\cV$ (inside a tubular neighborhood). There is a canonical functor
\begin{align*}
\Shv_{\bL}^{<0}(X\times \bR_t;\bk)\longrightarrow \Gamma(L,\mu\cShv_L)
\end{align*}
that can be roughly interpreted as taking microlocal stalks. 

If $L$ is smooth, then $\mu\cShv_L$ is locally constant with fiber equivalent to $\Mod(\bk)$, and this is the focus of this paper as it appears in Problem \ref{intro: problem}. In nice situations, e.g. $L$ is a closed (embedded) exact Lagrangian submanifold, then knowing $\Gamma(L,\mu\cShv_L)$ is enough to recover $\Shv_{\bL}^{<0}(X\times \bR_t;\bk)$ (cf. \cite{Guillermou}, \cite{JiTr}). So to understand $\Shv_{\bL}^{<0}(X\times \bR_t;\bk)$, it is crucial to first understand $\mu\cShv_L$, captured by the classifying map (\ref{eq: intro classify}). 

The understanding of the classifying map for $\mu\cShv_L$ is also crucial in the definition of a microlocal sheaf category for a Lagrangian skeleton $\Lambda$ (e.g. inside a Weinstein manifold). In general, there is no canonical symplectic framing as in the cotangent bundle case, so basically one first takes the (stable) symplectic frame bundle (or a thickening of it) of the exact symplectic neighborhood of $\Lambda$, which records the auxiliary data of how one can view local pieces of $\Lambda$ inside cotangent bundles, and define a sheaf of microlocal categories there, denoted by $\mu\cShv_{\widetilde{\Lambda}}$. The stable symplectic group, which is homotopy equivalent to the stable unitary group $U$, acts on the symplectic frame bundle, inducing a convolution action 
\begin{align*}
\mu\cShv_U\boxtimes \mu\cShv_{\widetilde{\Lambda}}\longrightarrow \mu\cShv_{\widetilde{\Lambda}}
\end{align*}
where $U$ is viewed as the stable version of its action Lagrangian graph inside $\varinjlim\limits_N T^*U(N)\times (T^*\bR^N)^-\times T^*\bR^N$. This exhibits $\mu\cShv_{\widetilde{\Lambda}}$ as a twisted equivariant sheaf of categories with respect to the character $U\longrightarrow B\Pic(\bk)$ that classifies $\mu\cShv_U$. This character is easy to write down using Theorem \ref{thm: main intro}, and this enables one to define a (possibly) twisted sheaf of microlocal categories on $\Lambda$ through a (non-canonical choice of) twisted equivariant descent. More details of this approach will appear in a forthcoming paper. There is an alternative construction of $\mu\cShv_\Lambda$ using h-principle \cite{Shende}, \cite{NaSh}.

\subsection{Sketch of the proof of Theorem \ref{thm: main intro}}
In this subsection, we give a sketch of the proof of the main theorem, and highlight the underlying geometry and key tools. Here for simplicity, we will assume $X=\bR^N$. For a general manifold $X$, everything can be reduced to the Euclidean case by embedding $X$ into $\bR^N, N\gg 0$ (see Subsection \ref{subsec: general X}).  

The first ingredient is the class of correspondences, which we call (local) Morse transformations, that gives local trivializations of $\mu\cShv_L$.  Roughly speaking, the Morse transformations for a given small open set $\cV\subset L$ is the class of \emph{local} symplectomorphisms of $T^*\bR^N$ such that 
\begin{itemize}
\item its graph in $(T^*\bR^N)^-\times T^*\bR^N$ admits a generating function in the base variables $(\bq_0,\bq_1)$ of $\bR^N\times \bR^N$ (i.e. its projection to the base is a local isomorphism),  

\item it transforms $\cV$ to a Lagrangian graph in $T^*\bR^N$ (i.e. admits a generating function in $\bq_1$). 
\end{itemize}
The Morse transformations have appeared as a generic class of contact transformations in \cite{Kashiwara}, and they are closely related to stratified Morse functions as in stratified Morse theory developed by Goresky--MacPherson \cite{stratified-morse-theory}. For more discussions in this aspect, see \cite{Jin}.

The second ingredient is the connection between Morse transformations for a germ of smooth Lagrangian $(L,x)$ (modulo equivalences given by post-composing with diffeomorphisms on the base manifold) and the space of paths from $\ell_x=:T_xL$ as an affine Lagrangian in $T^*\bR^N$ to the zero-section. The upshot is that these two spaces, after stabilizations, are homotopy equivalent up to a ``group completion". 

More explicitly, at the tangent space level, a Morse transformation is doing a Fourier transform, denoted by $\FT$, then followed by the time-1 map of a quadratic Hamiltonian depending only on $\bp$ (the momentum coordinates) that transforms $\FT(\ell_x)$ into a graph-type affine Lagrangian. If we view the open contractible locus $\cB$ of graph-type Lagrangians in the stable (affine) Lagrangian Grassmannian as a ``fat" base point, then each Morse transformation of $(L,x)$ can be assigned a path starting from $\ell_x$ and ending at $\cB$ as the concatenation of a path $\FT_t(\ell_x), 0\leq t\leq \frac{\pi}{2}$ (where one can take the Hamiltonian flow of $\frac{1}{2}(|\bq|^2+|\bp|^2)$ for $\FT_t$) and the image of $\FT(\ell_x)$ under the flow of a certain quadratic Hamiltonian in $\bp$. From this point of view, the space of such quadratic Hamiltonians involved (after stabilization) is homotopy equivalent to a torsor of 
\begin{align}\label{eq: intro nonneg}
\coprod\limits_{n}\{\text{nonnegative quadratic forms in } \bp\text{ of rank }n\}\simeq \coprod\limits_nBO(n), 
\end{align}
as a topological monoid. The Hamiltonian flows of the latter give based loops of $U/O$ at $\cB$, and this is an incidence of Bott periodicity. 

Now we can explain the key point of the proof. Let $\cP_L\rightarrow L$ be the pullback of the universal principal $\Omega(U/O)\simeq \bZ\times BO$-bundle over $U/O$ through the stable Gauss map $\gamma: L\rightarrow U/O$. Each Morse transformation for an open set $\cV\subset L$ gives a correspondence 
\begin{align*}
\xymatrix{&H\ar[dl]_{p_1}\ar[dr]^{p_0}&\\
\bR_{\bq_1}^N\times \bR_{t_1}&&\bR_{\bq_0}^N\times \bR_{t_0}
,}
\end{align*}
where $H\subset \bR^N_{\bq_0}\times \bR_{t_0}\times \bR^N_{\bq_1}\times  \bR_{t_1}$ is a smooth hypersurface, that establishes an equivalence of categories
\begin{align}\label{eq: intro trivialization}
(p_1)_*p_0^!: \mu\cShv_L(\cV;\bk)\overset{\sim}{\longrightarrow} \Loc(\cV;\bk),
\end{align}
i.e. a trivialization of $\mu\cShv_L$ on $\cV$. Now for simplicity, let us take $\cV$ sufficiently small and contractible around a point $x\in L$ (so that $\Loc(\cV;\bk)\simeq \Mod(\bk)$), then from the above remarks, we can (almost) think of $\cP_L|_{\cV}$ as parametrizing the Morse transformations, hence we get a family of trivializations (\ref{eq: intro trivialization}). Now for each object $\cF\in\mu\cShv_L(\cV;\bk)$, we get a family of $\bk$-modules, assembled into a local system on $\cP_L|_{\cV}$. The key proposition that we establish is the following.  
\begin{prop}[see Proposition \ref{prop: muShv, J-equiv} and Proposition \ref{prop: proof of thm}
]\label{prop: intro key}
For $\bk=\bS$ (the sphere spectrum), there is a natural equivalence of categories 
\begin{align*}
\mu\cShv_L(\cV;\bS)\overset{\sim}{\longrightarrow}  \Loc(\cP_L|_{\cV};\bS)^{J\text{-equiv}},
\end{align*}
that is compatible with restrictions along open inclusions $\cV'\hookrightarrow \cV$. 
\end{prop}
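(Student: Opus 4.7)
The plan is to build the equivalence directly from the family of Morse-transformation trivializations (\ref{eq: intro trivialization}), and then identify the resulting naturality constraint with $J$-equivariance.

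First I would upgrade a single Morse transformation to a family parametrized by $\cP_L|_\cV$. For $\cV$ sufficiently small and contractible around $x\in L$, a point $\sigma\in \cP_L|_{\cV}$ above $y\in\cV$ records a normalized path in the stable Lagrangian Grassmannian from $\ell_y = T_y L$ to the fat basepoint $\cB$, of the form described in the excerpt: the Fourier-transform arc $\FT_t(\ell_y)$ concatenated with the Hamiltonian flow of a nonnegative quadratic form in $\bp$. Such a normalized path canonically determines a Morse-type hypersurface $H_\sigma\subset (\bR^N_{\bq_0}\times\bR_{t_0})\times(\bR^N_{\bq_1}\times\bR_{t_1})$. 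Assembling the induced equivalences $(p_{1,\sigma})_*p_{0,\sigma}^!$ in $\sigma$ produces a functor
\begin{align*}
\Phi:\ \mu\cShv_L(\cV;\bS)\longrightarrow \Loc(\cP_L|_{\cV};\bS),
\end{align*}
and smooth dependence of $H_\sigma$ on $\sigma$, together with local constancy of $\mu\cShv_L$, shows that $\Phi(\cF)$ is genuinely a local system.

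Next I would analyze the fiberwise $\Omega(U/O)$-action on $\cP_L|_\cV$. A loop in $U/O$ based at $\cB$ is, up to homotopy, the Hamiltonian flow of a nonnegative quadratic form of rank $n$; this is precisely Bott periodicity in the form (\ref{eq: intro nonneg}). Tracing the effect of such a loop on $H_\sigma$ and on $(p_1)_*p_0^!$, the autoequivalence of $\mu\cShv_L(\cV;\bS)\simeq \Mod(\bS)$ it induces is, by the Morse-theoretic singular-support computation for quadratic generating functions, smash product with the Thom spectrum of the associated rank-$n$ bundle. This is exactly the classical unstable $J$-homomorphism $BO(n)\to B\Pic(\bS)$, so $\Phi$ factors through $\Loc(\cP_L|_{\cV};\bS)^{J\text{-equiv}}$.

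For the equivalence itself: since $\cV$ is contractible, both sides are abstractly equivalent to $\Mod(\bS)$. On the left any single Morse trivialization provides such an identification; on the right, evaluation at a chosen $\sigma$ identifies $J$-equivariant $\bS$-module local systems on the $\Omega(U/O)$-torsor $\cP_L|_\cV$ with $\Mod(\bS)$, because $J$-equivariance determines the holonomy along fiber loops. Step two precisely identifies these two identifications, so $\Phi$ is an equivalence. Naturality under $\cV'\hookrightarrow\cV$ is automatic: the normalized paths and their induced correspondences restrict compatibly, and $\cP_L$ is pulled back from $L$.

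The hard part is the homotopy-coherent version of the computation in step two: identifying the sheaf-theoretic autoequivalence of $\Mod(\bS)$ obtained by modifying $H_\sigma$ by a rank-$n$ quadratic Hamiltonian flow with the Thom-spectrum autoequivalence, as a map of spaces $BO(n)\to \Pic(\bS)$ rather than merely on isomorphism classes. On objects this is a classical singular-support calculation, but promoting it to a coherent family of autoequivalences, and checking that concatenation of loops corresponds to the multiplicative structure on the $J$-map, is the technical core of the proof. Once this is in hand, the equivalence and its naturality follow formally.
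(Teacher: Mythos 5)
Your proposal captures the heuristic that the paper itself lays out in the Introduction's sketch, but it elides two points that are the actual technical content of the argument, and one of them is a genuine gap rather than a routine verification.

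The central problem is your step ``a point $\sigma\in\cP_L|_\cV$ ... canonically determines a Morse-type hypersurface $H_\sigma$.'' It does not. A point of $\cP_L|_\cV$ is a path in $U/O$ from $\ell_y$ to the basepoint, and the paper is explicit that the space of Morse transformations (equivalently, of nonnegative quadratic Hamiltonians in $\bp$ satisfying the rank conditions) is a commutative monoid modeling $\coprod_n BO(n)$, which is only \emph{group-completion} equivalent to $\Omega(U/O)\simeq\bZ\times BO$, not homeomorphic to it --- note in particular that $\bZ$ is absent before completion. So you cannot build a well-defined coherent family $\{H_\sigma\}_{\sigma\in\cP_L|_\cV}$ of hypersurfaces; $\cP_L|_\cV$ does not literally parametrize Morse transformations. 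This is why the paper introduces the auxiliary fibrant simplicial category $\QHam_L(U/O)$ with its elaborate conditions ia)--id), constructs $F_L$ by Kan extension along the Cartesian fibration $p_L:\QHam_L(U/O)^{op}\to\Open(L)$, and then proves (Lemma \ref{lemma: cofinal D(x)} plus Proposition \ref{prop: proof of thm}(a)) that the stalks of the Kan-extended presheaf $(\Phi^{pre}_\alg,\Phi^{pre}_{\sfmod})$ group-complete to $(\bZ\times BO,\gamma^{-1}(\eta_{U/O}))$. The passage from the monoid $G$ to $\cP_L$ happens only after sheafification and group completion (implemented via Lemma \ref{lemma: Loc_F^pre} on $\cLoc_{\cF^{pre}}$ versus $\cLoc_{\cF}$), not pointwise as your $\Phi$ requires.

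Your second gap is more honestly flagged but more fatal: you identify ``the hard part'' as promoting the object-level Thom-spectrum computation to a coherent family of autoequivalences with the correct multiplicative structure, and say ``once this is in hand, the equivalence and naturality follow formally.'' In fact this coherence issue \emph{is} the proof: the paper expends most of Section \ref{sec: proof} (the two sets of correspondences in \ref{subsubsec: first corr}--\ref{subsubsec: second corr}, Lemma \ref{eq: corresp cH_M_N}, Lemma \ref{lemma: H_V,M_N}, the contractible spaces $\cS_\cY$ and the simplicial diagrams (\ref{eq: square in Corr}), (\ref{diagram: Corr Y_1, Y_2, Y_3}), (\ref{eq: F_L, Seq})) plus an entire Appendix on algebra objects in $\bCorr(\cC)$ precisely to produce that coherence. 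The $J$-map is not invoked via an abstract Thom-spectrum identification; it is realized concretely as $\pi_*\omega_{VG}\in\CAlg(\Loc(G;\Sp)^\otimes)$ inside the correspondence machinery, and the $J$-equivariance of $\Loc(\cV\times M;\Sp)$ comes from the $VG$-module structure of $\bH^-_{\cV,M_N}$. Your proposal would need to supply all of this; nothing about it is formal. Additionally, your naturality claim (``$\cP_L$ is pulled back from $L$'') misses that $\mu\cShv_L$ must be compared with $\fG_L$ as cosheaves, which requires the tensoring argument (\ref{eq: muSh tensor})--(\ref{eq: muSh tensor 2}) with the dual local system $\mu\cShv_L^\vee$ and a costalk comparison --- another nontrivial step.
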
 
Here $\cP_L|_{\cV}$ is viewed as a torsor over $\Omega(U/O)$ in the $\infty$-category of spaces $\Spc$, and the $J$-homomorphism $J: \Omega(U/O)\longrightarrow \Pic(\bS)$ is viewed as a character so that one can talk about $J$-equivariant local systems on $\cP_L|_{\cV}$. Then it is easy to see that Proposition \ref{prop: intro key} immediately implies our main result. 

The proof of Proposition \ref{prop: intro key} is essentially contained in \cite{Jin}, though the part of compatibility with restrictions under open inclusions was not discussed there (due to the non-flexibility of the model representing (\ref{eq: intro nonneg}) that we chose there). The idea is to quantize the Hamiltonian action of the space of nonnegative quadratic Hamiltonians depending only on $\bp$ as in (\ref{eq: intro nonneg}), which we referred as the Hamiltonian $\coprod\limits_n BO(n)$-action in \emph{loc. cit.}. Then the tautological vector bundle on $\coprod\limits_n BO(n)$ shows up, which gives rise to the $J$-homomorphism. 

Since there is a nontrivial amount of structures and functorialities involved in Proposition \ref{prop: intro key}, one needs to adopt the right machinery to write down a clean argument. First, the foundations on sheaves of spectra are contained in \cite{higher-topos} and \cite{higher-algebra}. Second, we employ the $(\infty,2)$-category of correspondences developed by Gaitsgory--Rozenblyum \cite{GaRo}, which is designed to treat the six-functor formalism in a clean and functorial way. With our inputs on concrete constructions of (commutative) algebra objects and their modules together with (right-lax) morphisms between them in the category of correspondences $\bCorr(\cC)$, for a Cartesian symmetric monoidal $(\infty,1)$-category $\cC$, in \cite{Jin} (further upgraded in Appendix \ref{sec: Appendix}), we are able to choose the appropriate models (explained in Section \ref{sec: proof}) to achieve the functorial results.

\subsection{Organization} 
The organization of the paper goes as follows. In Section \ref{sec: Bott}, we give some account of the geometry of $U/O$ and review one step of Bott periodicity $\Omega(U/O)\simeq \bZ\times BO$. The point of view is essentially following \cite{Harris}. In Section \ref{sec: microlocal}, we briefly review microlocal sheaf theory over ring spectra and define the sheaf of microlocal categories $\mu\Shv_L$ on a smooth Lagrangian $L$ in $T^*X$. We present two equivalent versions of $\mu\Shv_L$ with the latter version (denoted by $\mu\cShv_L$) adopted in later arguments, for it is convenient to run arguments with correspondences using this version. Then we recall Morse transformations and relate them to paths in $U/O$, which illustrates the geometry underlying the proof of the main theorem. We finish the section by recalling the category of correspondences (\cite{GaRo}) and the functor $\Shv\Sp_*^!$. In Section \ref{sec: proof}, we give the proof of the main result and the application Corollary \ref{cor: AbKr}. As explained before, the key idea of quantizing a Hamiltonian $\coprod\limits_n BO(n)$-action is already contained in \cite{Jin}. The additional ingredient here is the construction of an $\infty$-category $\QHam_L(U/O)$ over $\Open(L)^{op}$ that roughly plays the role of a ``principal $\coprod\limits_nBO(n)$-bundle" to approximate the pullback of the universal principal $\Omega(U/O)$-bundle along the Gauss map. Finally, in Appendix \ref{sec: Appendix}, we give the construction of a canonical functor from a correspondence category of $\Fin_*$-objects of a Cartesian symmetric monoidal $\infty$-category $\cC$ to the category of commutative algebra objects in $\bCorr(\cC)$ with right-lax homomorphisms (and also a module version). This is an upgraded version of the results that we have obtained in \cite{Jin} about constructions of (commutative) algebra and their modules in $\bCorr(\cC)$, and it is frequently (sometimes implicitly) used in the arguments throughout Section \ref{sec: proof}. We also include a list of notations and conventions for categories in Appendix \ref{sec: notations}.

\subsection{Acknowledgement}
I am grateful to David Nadler and Dima Tamarkin for many inspiring discussions and for valuable feedbacks on this work. I would like to thank David  Treumann for useful comments on an earlier draft. I have benefited from discussions with St\'ephane Guillermou that inspired the proof of Corollary \ref{cor: AbKr}. This work is partially supported by an NSF grant DMS-1854232. 

\section{Geometry of $U/O$ and Bott periodicity}\label{sec: Bott}

\subsection{Spectral decomposition for linear Lagrangians}\label{subsec: spectral linear}
Let $\omega_0$ be the standard symplectic form on $T^*\bR^N$, and let $J_0$ be the standard complex structure on $T^*\bR^N$ compatible with $\omega_0$ determined by $g_0(-,-)=\omega_0(-,J_0(-))$ is the Euclidean metric on $\bR^{2N}$.  
For any linear Lagrangian $\ell\in T^*\bR^N$, it determines a quadratic form $A_\ell$ on its projection to the zero-section, given by $A_\ell(u,v)=\widetilde{u}(v)$, where $\widetilde{u}$ is any lifting of $u$ in $\ell$ under the projection to be base. Note that $A_\ell$ is \emph{twice} of a primitive of $\ell$. Let $\ell_{[\infty]}$ be the cotangent fiber at 0. 
\begin{lemma}\label{lemma: ell_infty perp}
For any linear Lagrangian $\ell\subset T^*\bR^N$, the subspace $J_0(\ell\cap \ell_{[\infty]})$ is the orthogonal complement to $\proj_{\bR^N}(\ell)$. 
\end{lemma}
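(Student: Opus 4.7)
The statement is a direct unfolding of definitions plus the Lagrangian condition, so I would prove it by (i) identifying $J_0(\ell \cap \ell_{[\infty]})$ explicitly as a subspace of $\bR^N = \proj_{\bR^N}(T^*\bR^N)$, (ii) showing the nontrivial inclusion from the Lagrangian condition on $\ell$, and (iii) closing up with a dimension count.

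\textbf{Step 1 (Setup).} Using the convention $\omega_0((q_1,p_1),(q_2,p_2)) = p_1\cdot q_2 - p_2 \cdot q_1$ and $g_0 = \langle-,-\rangle$ the standard inner product, the defining equation $g_0(-,-) = \omega_0(-,J_0(-))$ forces $J_0(q,p) = (p,-q)$. Since $\ell_{[\infty]} = \{0\}\times \bR^N$, the intersection $\ell \cap \ell_{[\infty]}$ consists of vectors of the form $(0,p)$ with $(0,p)\in \ell$, so
\[
J_0(\ell \cap \ell_{[\infty]}) = \{(p,0) : (0,p)\in \ell\} \;\subset\; \bR^N\times\{0\}.
\]
Thus the assertion reduces to the identity of linear subspaces of $\bR^N$:
\[
W := \{p \in \bR^N : (0,p)\in \ell\} \;=\; V^\perp, \quad \text{where } V := \proj_{\bR^N}(\ell).
\]

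\textbf{Step 2 (Inclusion via Lagrangian condition).} Given any $p\in W$ and any $q\in V$, choose a lift $(q,p')\in \ell$. The Lagrangian condition $\omega_0((0,p),(q,p'))=0$ collapses to $p\cdot q = 0$, so $p\in V^\perp$, giving $W\subseteq V^\perp$.

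\textbf{Step 3 (Dimension count).} The short exact sequence $0 \to \ell\cap \ell_{[\infty]} \to \ell \to V \to 0$ gives $\dim W = \dim(\ell\cap\ell_{[\infty]}) = N - \dim V = \dim V^\perp$, so the inclusion in Step 2 is an equality.

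\textbf{Main obstacle.} There is essentially none; the only substantive point is fixing sign conventions so that the formula for $J_0$ and for $\omega_0$ are mutually consistent with the stated compatibility $g_0(-,-)=\omega_0(-,J_0(-))$. Once the signs are pinned down, the argument is a one-line application of the Lagrangian condition plus a dimension count.
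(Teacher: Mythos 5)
Your proof is correct and follows essentially the same route as the paper's: you use the Lagrangian condition $\omega_0|_{\ell\times\ell}=0$, together with the observation that a fiber-direction vector pairs trivially under $\omega_0$ with another fiber-direction vector (your $p'\cdot 0 = 0$ term, the paper's $\omega_0(v,u)=\omega_0(v,\widetilde{u})$), to get the inclusion $J_0(\ell\cap\ell_{[\infty]})\subseteq \proj_{\bR^N}(\ell)^\perp$, and then close with a dimension count. The only difference is stylistic: you work in coordinates after pinning down $J_0(q,p)=(p,-q)$, while the paper keeps the computation invariant via $g_0(J_0v,u)=\omega_0(v,u)$.
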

\begin{proof}
For any $u\in \proj_{\bR^N}\ell$ and $v\in \ell\cap \ell_{[\infty]}$, we have 
\begin{align*}
g_0(J_0v, u)=\omega_0(v,u)=\omega_0(v, \widetilde{u})=0,
\end{align*}
where $\widetilde{u}$ is any lifting of $u$ in $\ell$ as before. So we see that $J_0(\ell\cap \ell_{[\infty]})$ is orthogonal to $\proj_{\bR^N}(\ell)$. By dimension reason, they are orthogonal complements to each other. 
\end{proof}
By Lemma \ref{lemma: ell_infty perp}, any linear Lagrangian $\ell\subset T^*\bR^N$ is determined by a subspace $J_0(\ell\cap \ell_{[\infty]})\subset \bR^N$ and a linear Lagrangian graph 
\begin{equation}\label{eq: A_ell}
\ell_{fin}=\Gamma(\frac{1}{2}dA_\ell)\subset T^*(J_0(\ell\cap \ell_{[\infty]}))^\perp,
\end{equation}
and we have $\ell=J_0(\ell\cap \ell_{[\infty]})\oplus \ell_{fin}$ under the orthogonal splitting $T^*\bR^N=T^*(J_0(\ell\cap \ell_{[\infty]}))\oplus T^*(J_0(\ell\cap \ell_{[\infty]}))^\perp$. By this observation, we can view $\ell$ as a \emph{generalized quadratic form} in the sense that it has eigenvalues range in $\bR\cup \{\infty\}\cong S^1$, and its spectral decomposition is equal to the sum of $A_\ell$ and the eigenspace of $\infty$ given by $J_0(\ell\cap \ell_{[\infty]})$.  Such a point of view will be very useful to us. First, it will give a convenient way to describe and prove one step of Bott periodicity: $\Omega(U/O)\simeq \bZ\times BO$. Second, we will often use it to give an easy description of a small neighborhood of any $\ell\in U(N)/O(N)$, namely, those linear Lagrangians given by small deformations of the spectral decomposition of $\ell$. 

Regarding the stabilization process $U/O=\varinjlim\limits_{N}U(N)/O(N)$, where the inclusions $U(N)/O(N)\hookrightarrow U(N+1)/O(N+1)$ are given by direct sum with the zero-section in the extra dimension, a \emph{stable} linear Lagrangian has spectral decomposition given by a finite collection of nonzero (possibly including $\infty$) distinct eigenvalues $\lambda_1,\cdots, \lambda_k$, and finite dimensional eigenspaces $V_{\lambda_1},\cdots, V_{\lambda_k}$, and the orthogonal complement of $\bigoplus\limits_{j=1}^k V_{\lambda_j}$ (which is infinite dimensional) serves as the eigenspace of $0$.

\subsection{One step of Bott periodicity: $\Omega(U/O)\simeq \bZ\times BO$}\label{subsec: Bott}

Recall the ordinary 1-category $\Gamma$ defined in \cite{Segal2}, which is canonically equivalent to $\Fin_*^{op}$, and a $\Gamma$-space is equivalent to a commutative algebra object in $\Spc$. To be consistent with notations used in the rest of the paper, we will use $\Fin_*^{op}$ instead of $\Gamma$. 
If we have fixed an identification $\lng n\rng\cong \{1,\cdots, n,*\}$, then one has a natural functor  $\Delta\rightarrow \Fin_*^{op}$, taking $[n]\in \Delta$ to $\lng n\rng$ and a morphism $f: [n]\rightarrow [m]$ to $\hat{f}: \lng m\rng\rightarrow \lng n\rng$ determined by requiring $\hat{f}^{-1}(k)=\{f(k-1)+1,\cdots, f(k)\}, k\in \lng n\rng^\circ$ (here $\lng n\rng^\circ=\lng n\rng\backslash\{*\}$).  Using this functor, one can turn a $\Fin_*$-space into a simplicial space,  and one takes its geometric realization as the \emph{geometric realization} of the original $\Fin_*$-space.  

Recall the definition of the delooping functor  in \emph{loc. cit.}
\begin{align*}
\sfB: \CAlg(\Spc)&\longrightarrow \CAlg(\Spc)\\
A&\mapsto \sfB A
\end{align*}
where 
$\sfB A: N(\Fin_*)\rightarrow \Spc$ takes $\lng n\rng$ to the geometric realization of the $\Fin_*$-object $\lng m\rng\mapsto A(\lng m\rng\wedge \lng n\rng)$, induced from the canonical functor 
\begin{align*}
N(\Fin_*)\times N(\Fin_*)&\longrightarrow N(\Fin_*)\\
(\lng n\rng,\lng m\rng)&\mapsto \lng n\rng\wedge \lng m\rng\cong(\lng n\rng^\circ\times \lng m\rng^\circ)\cup\{*\}.
\end{align*}
Since $\CAlg(\Spc)$ is presentable and $\sfB$ preserves colimits, it admits a right adjoint $\sf\Omega$ which is the based loop functor. The natural map $A\rightarrow \sfOmega\sfB A$ from adjunction, for any $A\in \CAlg(\Spc)$,  serves as the group completion for $A$ (cf. \cite{Quillen}).

Now following the construction in \cite{Harris}, let $G^\bullet$ be the $\Fin_*$-space defined as follows:
\begin{align*}
&G^{\lng 0\rng}=pt, \\
&G^{\lng n\rng}:=\{(E_j)_{j\in \lng n\rng^\circ}: E_j\overset{\mathrm{linear}}{\subset} \varinjlim\limits_N\bR^N, \dim E_j<\infty, E_j\perp E_k\text{ for }j\neq k\};
\end{align*}
for any $f: \lng n\rng\rightarrow \lng m\rng$, 
\begin{align*}
f_*:& G^{\lng n\rng}\longrightarrow G^{\lng m\rng}\\
&(E_j)_{j\in \lng n\rng}\mapsto (\bigoplus\limits_{j\in f^{-1}(k)}E_{j})_{k\in \lng m\rng^\circ}. 
\end{align*}
There is a natural homeomorphism from $\sfB G^{\lng 1\rng}$ to $U/O$ induced from  
\begin{align}
\nonumber&\coprod\limits_n(G^{\lng n\rng}\times |\Delta^n|)\longrightarrow U/O\\
\label{eq: homeo U/O}&((E_j)_{1\leq j\leq n}, (0\leq t_1\leq \cdots\leq t_n\leq 1)) \mapsto (E_j, -\tan(\pi t_j))_{1\leq j\leq n}. 
\end{align}
Here we have fixed an identification between $\lng n\rng$ and $\{1,\cdots, n,*\}$, and  $(E_j, -\tan(\pi t_j))_{1\leq j\leq n}$ denotes for the spectral decomposition of the image Lagrangian. Here we adopt the convention that  the orthogonal complement of $\bigoplus\limits_{j=1}^nE_j$ has eigenvalue 0. Since the map (\ref{eq: homeo U/O}) is compatible with the gluing map for the geometric realization, it factors through $\sfB G^{\lng 1\rng}$ and gives the desired homeomorphism.  The group completion $G^{\lng 1\rng}\rightarrow \sfOmega\sfB G^{\lng 1\rng}\simeq \Omega(U/O)$ takes $E\in G^{\lng 1\rng}$ to the based loop $\ell(t),t\in [0,1]$, where $\ell(t)$ has spectral decomposition $(E, -\tan(\pi t))$. 

To see the equivalence $B(\bZ\times BO)\simeq U/O$, one considers the following $\Fin_*$-topological categories (the objects form a space rather than just a set)
\begin{align*}
&\widetilde{G}^{\lng 0\rng}=pt, \\
&\widetilde{G}^{\lng n\rng}:=\begin{cases}&\text{Objects}: 
(E_j, E_j')_{j\in \lng n\rng^\circ}: E_j, E_j'\subset\varinjlim\limits_N\bR^N \text{ finite dimensional}, E_j\perp E_k,  E'_j\perp E'_k\text{ for }j\neq k\\
&Hom((E_j, E_j')_j, (F_j, F_j')_j)=\begin{cases}*, \text{ if }\exists W_j\perp E_j, E_j' \text{ with } E_j\oplus W_j=F_j, E_j'\oplus W_j=F_j'\\
\emptyset, \text{ otherwise}
\end{cases}
\end{cases}.
\end{align*}
Taking the geometric realization of the $\Fin_*$-space $|\widetilde{G}^{\lng \bullet\rng}|$, where each $|\widetilde{G}^{\lng n\rng}|$ means the \emph{classifying space} of the topological category in the sense of \cite{Segal}, i.e. the geometric realization of its nerve, gives $B|\widetilde{G}^{\lng 1\rng}|\simeq B(\bZ\times BO)$. There is a homemorphism from the geometric realization of $|\widetilde{G}^{\lng \bullet\rng}|$ to $(U/O\times U/O)/U/O\simeq U/O$, where $U/O$ acts on the two factors by the diagonal action of taking direct sum of Lagrangian subspaces.

\section{Microlocal sheaf categories for Lagrangian submanifolds}\label{sec: microlocal}
In this section, we first recall the construction of the sheaf of microlocal categories on a smooth Lagrangian following \cite{Kashiwara}, which generalizes to the ring spectra setting without much change (for more details see \cite{JiTr}), then we recall the notion of Morse transformations \cite{Jin} and make connections to the space of paths in $U/O$ ending at the zero-section. Lastly, we briefly review the category of correspondences and collect the results that we will need. 

\subsection{Basics in microlocal sheaf theory}
For any smooth manifold $Y$ and a commutative (or at least $\bE_2$) ring spectrum $\bk$, let $\Shv(Y;\bk)$ be the stable $\infty$-category of all sheaves valued in $\bk$-spectra on $Y$. The notion of singular support for any sheaf over an ordinary ring defined in \cite{Kashiwara} can be extended in the spectra setting without change.
\begin{definition}\cite[Definition 5.1.2]{Kashiwara} For any sheaf $\cF\in\Shv(Y;\bk)$, a covector $(x,\xi)\in\mathring{T}^*Y=T^*Y\backslash T^*_YY$ is \emph{not} in the \emph{singular support} of $\cF$, which is denoted by $\SS(\cF)$, if there exists an open neighborhood $U$ of $(x,\xi)$ such that for any covector $(\tilde{x},\tilde{\xi})\in U$ and any smooth function $f: X\rightarrow \bR$ satisfying $f(\tilde{x})=0, df(\tilde{x})=\tilde{\xi}$, we have 
\begin{align*}
\Gamma_{\{f\geq 0\}}(\cF)|_{\tilde{x}}\simeq 0. 
\end{align*}
\end{definition}
It is clear from the definition that $\SS(\cF)$ is a closed conic subset of $\mathring{T}^*Y$. A deep theorem of Kashiwara--Schapira \cite{Kashiwara} asserts that $\SS(\cF)$ is always coisotropic. For any closed conic (coisotropic) subset $C\subset \mathring{T}^*Y$, define
\begin{align*}
\Shv_C(Y;\bk)=:\{\cF\in \Shv(Y;\bk): \SS(\cF)\subset C\}\subset\Shv(Y;\bk), 
\end{align*}
which is a full stable subcategory of $\Shv(Y;\bk)$ closed under infinite direct sums. 

We remark that the parts of microlocal sheaf theory that we will rely on throughout the paper are the functorial properties of singular support under the six functors, which are contained in \cite[Section 5.4]{Kashiwara} (see also \cite[Appendix 2]{Tamarkin1}). Although the original proofs are for sheaves over ordinary rings and for the bounded derived category, all the arguments can be carried out for the spectra setting without essential change. In the case when $C=\Lambda$ is a conic Lagrangian, $\Shv_\Lambda(Y;\bk)$ is relative easy to understand, which only has weakly constructible sheaves, and this is the situation that we will mostly focus on.

\subsection{The sheaf of microlocal categories on a smooth Lagrangian}

Let $X$ be a smooth manifold. The cotangent bundle $T^*X$ is an exact symplectic manifold, in the sense that the symplectic form $\omega=d\bq\wedge d\bp=\sum\limits_jdq_j\wedge dp_j$ has a primitive $\alpha=\bp d\bq$ (up to a negative sign), where $(\bq,\bp)$ is any local Darboux coordinate. 
For a possibly non-conic \emph{exact} Lagrangian $L\subset T^*X$, i.e. $\alpha|_L$ is an exact 1-form, there is a standard way to turn it into a conic Lagrangian in a larger cotangent bundle as follows. 

Let $T^{*,<0}(X\times \bR_t)$ denote for the negative half of the cotangent bundle $T^*(X\times \bR_t)$ consisting of covectors that have strictly negative coefficients in the $dt$-factor. 
For a smooth exact Lagrangian $L\subset T^*\bR^N$, fix a primitive $f_L$ for $\alpha|_L$, i.e. $\alpha|_L=df_L$. Let $\bL$ be the associated \emph{conic Lagrangian lifting} in $T^{*,<0}(\bR^N\times \bR_t)$ defined by 
\begin{align*}
\bL=:\{(\bq,\bp; t,\tau): (\bq,-\bp/\tau)\in L, t=f_L(\bq,\bp), \tau<0\}\subset T^*\bR^N\times \bR_t\times \bR_{\tau<0}. 
\end{align*}
It is usually convenient to view $\bL$ as the cone over the Legendrian $\bL_{\tau=-1}$ in the $1$-jet space $(T^*\bR^N\times \bR_t,-dt+\alpha)$. For any open subset $\cU\subset L$, we will use $\Cone(\cU)$ to denote the conic lifting of $\cU$, as a conic open subset in $\bL$. 

For simplicity, in the following discussions, we will restrict to the case where $X=\bR^N$ (or denoted by $V$ as a vector space). Eventually, the proof of our main theorem reduces to the $\bR^N$ case, since for any smooth manifold $X$, we can embed it into $\bR^N, N\gg 0$ and the case for $X$ follows easily then (see Subsection \ref{subsec: general X} for the details).

Following the work \cite{Kashiwara}, one can define a sheaf of microlocal sheaf categories on $L$ (over some fixed ring spectrum $\bk$) as follows. For any open subset $\cU\subset L$, set
\begin{align}\label{eq: muShv_L}
\mu\Shv_{L}^{pre}(\cU;\bk):=\varinjlim\limits_{\substack{\Omega\in \ConicOpen^{op}(T^{*}(\bR^N\times \bR_t))\\
\Cone(\cU)\overset{\text{closed}}{\subset} \Omega}}\Shv_{\Cone(\cU)\cup\Omega^c}(\bR^N\times\bR_t;\bk)/\Shv_{\Omega^c}(\bR^N\times\bR_t;\bk),
\end{align}
and the restriction maps are the natural ones. By the invariance of microlocal sheaf categories under quantized contact transformations, for sufficiently small $\cU\subset \bL$, we have a non-canonical equivalence $\mu\Shv_{L}^{pre}(\cU;\bk)\simeq \Loc(\cU;\bk)$ compatible with restrictions (see Section \ref{subsec: Morse transf} for more details). 
This implies that the sheafification of $\mu\Shv_{L}^{pre}$, denoted by $\mu\Shv_L$, is a locally constant sheaf of categories $\mu\Shv_{L}$ on $L$ with fiber equivalent to $\Mod(\bk)$.

Note that since the construction of $\mu\Shv_{L}$ comes from local data, one can drop the exact condition on the Lagrangian, and the definition also makes sense for a smooth Lagrangian immersion.

\subsection{An alternative construction of $\mu\Shv_L$}\label{subsec: alternative}
We present an equivalent construction of $\mu\Shv_L$ for a smooth Lagrangian $L\subset T^*\bR^N$ based on weighted blow up, which has the advantage of avoiding the quotient by $\Shv_{\Omega^c}(\bR^N\times\bR_t;\bk)$ in (\ref{eq: muShv_L}), and which later gives us a convenient and precise setting to perform Morse transformations (cf. Section \ref{subsec: Morse transf}). The approach presented here is largely inspired by a course given by D. Tarmarkin in 2016. 

Let $V=\bR^N$. Let 
\begin{align*}
&\cV_L=L\times V\times \bR_t,\text{ and } \\
&\pi_L: \cV_L\longrightarrow L,\ \pi_{V\times\bR_t}: \cV_L\longrightarrow V\times\bR_t
\end{align*}
be the obvious projections. Assume as before that $L$ is exact and fix a primitive $f_L$, and let 
\begin{align*}
\iota_{f_L}: L&\hookrightarrow \cV_L\\
(\bq, \bp)&\mapsto (\bq,\bp; \bq; f_L(\bq,\bp))
\end{align*}
be the ``diagonal" embedding. Now we do a version of deformation to the normal cone with respect to the following $\bR_+$-action on the normal bundle of $\iota_{f_L}$, whose fiber at $\iota_{f_L}(\bq_0,\bp_0)$ is canonically identified with $V\times \bR_t$ with shifted center at $(\bq_0, f_L(\bq_0,\bp_0))$:
\begin{align*}
&s\cdot (\bq-\bq_0, t-f_L(\bq_0,\bp_0))=(s(\bq-\bq_0), s\bp_0\cdot (\bq-\bq_0)+s^2(t-f_L(\bq_0, \bp_0)-\bp_0\cdot (\bq-\bq_0)), \\
&\ \ \ \ \ \ \text{ for }(\bq,t)\in V\times \bR_t,\ s\in \bR_+.
\end{align*}
The outcome is 
\begin{align*}
\widetilde{\cV}_L=\Bl^{\textsf{weighted}}_{\iota_{f_L}(L)\times \{0\}}(\cV_L\times \bR_{z\geq 0})-\overline{(\cV_L-\iota_{f_L}(L))}\times \{0\}\longrightarrow \bR_{z\geq0}
\end{align*}
with an open embedding 
\begin{align*}
j: \cV_L\times \bR_{z>0}\hookrightarrow \widetilde{\cV}_L,
\end{align*}
and a closed embedding 
\begin{align*}
i_0: \cV_L\hookrightarrow \widetilde{\cV}_L
\end{align*}
induced from the natural identification between the central fiber and $\cV_L$. 

Taking the tangent space of $L$ for each $x\in L$ gives the (affine) Gauss map from $L$ to the affine Lagrangian Grassmannian
\begin{align*}
L&\longrightarrow \LagGr_{\aff}(T^*V)\\
x&\mapsto \ell_x=T_xL.
\end{align*}
By choosing the primitive of $\alpha|_{\ell_x}$ such that its value at $x$ equals $f_L(x)$, we get a family of Legendrian liftings of the affine Lagrangians $\ell_x, x\in L$. Let 
\begin{align*}
\bL_{\Gauss}\subset T^{*,<0}(L\times V\times \bR_t)
\end{align*}
be the cone over the smooth \emph{closed} Legendrian in the 1-jet bundle $T^*L\times T^*V\times \bR_t$ from this family of Legendrian liftings. 

For any smooth manifold $Y$ and closed conic set $C\subset T^{*,<0}(Y\times\bR_t)$, following \cite{Tamarkin1}, define 
\begin{align*}
&\Shv^{\geq 0}(Y\times\bR_t;\bk)=:\Shv_{\mathring{T}^*(Y\times\bR_t)\backslash T^{*,<0}(Y\times\bR_t)}(Y\times\bR_t;\bk),\\
&\Shv^{<0}(Y\times \bR_t;\bk)=: \Shv(Y\times\bR_t;\bk)/\Shv^{\geq 0}(Y\times\bR_t;\bk),\\
&\Shv_C^{<0}(Y\times \bR_t;\bk)=: \{\cF\in \Shv^{<0}(Y\times \bR_t;\bk): \SS(\cF)\cap T^{*,<0}(Y\times\bR_t)\subset C \}\overset{\text{full}}{\subset}\Shv^{<0}(Y\times \bR_t;\bk). 
\end{align*}
Here we set $\Shv^{<0}(Y\times \bR_t;\bk)$ as the left orthogonal complement of $\Shv^{\geq 0}(Y\times\bR_t;\bk)$ (see Theorem \ref{thm: variant Tamarkin}).

Let $\pi_{V\times \bR_t}: \cV_L\times \bR_{z>0}\rightarrow V\times\bR_t$ be the projection to the $V\times \bR_t$ factor. 
\begin{lemma}\label{lemma: equiv muShv}
For each open $\cU\subset L$, there is an equivalence
\begin{align}\label{lemma eq: def normal}
i_0^*j_*\pi_{V\times \bR_t}^*: \mu\Shv_{L}(\cU;\bk)\longrightarrow \Shv_{\bL_\Gauss}^{<0}(\cU\times V\times \bR_t;\bk).
\end{align}
\end{lemma}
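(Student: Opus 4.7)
The plan is to reduce to a local statement on $L$ via sheaf-of-categories descent, and then identify both sides via the Kashiwara--Schapira singular-support calculus applied to the weighted blow-up $\widetilde{\cV}_L$. Both sides form sheaves of categories on $L$ in the $\cU$-variable: the source $\mu\Shv_L$ by construction, and the target $\cU \mapsto \Shv_{\bL_\Gauss}^{<0}(\cU \times V \times \bR_t;\bk)$ because the singular-support condition is local along the $\cU$-factor and sheaf descent holds in the Tamarkin category. The functor $i_0^* j_* \pi_{V\times\bR_t}^*$ is natural in $\cU$, since the weighted-blow-up construction is functorial in open inclusions on $L$. Hence it suffices to verify the equivalence on a basis of small contractible $\cU \subset L$ admitting a Morse transformation that brings $L|_\cU$ to a graph Lagrangian.

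Next, I would check that the functor is well-defined on the quotient $(\ref{eq: muShv_L})$ and lands in $\Shv_{\bL_\Gauss}^{<0}$. The crux is a singular-support computation: for $\cF$ whose singular support in $T^{*,<0}$ is contained in $\Cone(\cU) \cup \Omega^c$, the singular support of $i_0^* j_* \pi_{V\times\bR_t}^* \cF$ in $T^{*,<0}$ should lie in $\bL_\Gauss$, while sheaves whose singular support lies entirely in $\Omega^c$ should map to the zero class upon passage to $\Shv^{<0}$. The calculation hinges on the specific form of the weighted $\mathbf{R}_+$-action: linear in $\bq - \bq_0$ and quadratic in $t - f_L(\bq_0, \bp_0)$ with the shear $\bp_0 \cdot (\bq - \bq_0)$. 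This shear is exactly the first-order Taylor expansion of the primitive $f_L$, so the central fiber of $\widetilde{\cV}_L$ realizes the tangent Legendrian family: pulling back under $\pi_{V\times\bR_t}$, extending across the deformation via $j_*$, and restricting to $z=0$ specializes the Legendrian lift $\bL$ to its linearization $\bL_\Gauss$. I would reduce this to Darboux coordinates and apply the non-characteristic pullback and base change formulas of \cite{Kashiwara}, extended to sheaves of spectra along the lines of \cite{JiTr}.

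For the local equivalence, after a Morse transformation on small $\cU$ both sides become equivalent to $\Loc(\cU; \bk)$: the source by the standard quantized contact transformation theorem recalled right after $(\ref{eq: muShv_L})$, and the target by the variant of Tamarkin's theorem identifying sheaves microsupported on a trivial family of Legendrians with local systems on the parameter. A direct computation then shows that $i_0^* j_* \pi_{V\times\bR_t}^*$ reduces to the identity on $\Loc(\cU; \bk)$ in this flattened model, because $\widetilde{\cV}_L$ degenerates to a product over the $\bR_{z\geq 0}$-factor once $L$ is a graph. The main obstacle I anticipate is the singular-support bookkeeping of the previous paragraph: $j_*$ does not generally preserve singular-support bounds, and the quadratic piece of the weighted $\mathbf{R}_+$-action obscures the direct relationship between the singular supports of $\cF$ and $i_0^* j_* \pi_{V\times\bR_t}^* \cF$. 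I expect the resolution to be an explicit coordinate computation exploiting that the shear term $\bp_0 \cdot (\bq - \bq_0)$ is engineered precisely to compensate the tangent-plane contribution, so that on the central fiber the only surviving singular directions are those along $\bL_\Gauss$.
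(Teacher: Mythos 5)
Your proposal follows the same overall local-to-global skeleton as the paper: observe that both sides are sheaves of categories in $\cU$, reduce to a small basis of opens on which both sides become equivalent to $\Mod(\bk)$ (equivalently $\Loc$ on a contractible open), and conclude by descent. The difference lies in how the local reduction is carried out. The paper first invokes invariance of both sides under compactly supported Hamiltonian isotopies (citing GKS) to put $L$ in general position so that its projection to $V$ is finite-to-one; one then covers $\cU$ by opens $\Omega_\alpha$ cut out by box neighborhoods $B_\alpha\times I_\alpha\subset V\times\bR_t$, for which the colimit in (\ref{eq: muShv_L}) stabilizes to the explicit quotient $\Shv_{\Omega_\alpha}(B_\alpha\times I_\alpha;\bk)/\Loc(B_\alpha\times I_\alpha;\bk)$, making the comparison with the target essentially immediate. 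You instead propose to use a Morse transformation to flatten $L|_{\cU}$ to a graph and then argue that the weighted blow-up degenerates to a product. That route is plausible but carries an extra burden you do not address: the Morse transformation lives on the ambient $T^*(V\times\bR_t)$, whereas the functor $i_0^*j_*\pi_{V\times\bR_t}^*$ is built from the weighted blow-up of $\cV_L=L\times V\times\bR_t$, and one must check that the quantized contact transformation intertwines the blow-up construction with the corresponding one for the flattened Lagrangian (equivalently that the functor is natural with respect to such transformations). The paper's Hamiltonian-isotopy-plus-box-cover approach sidesteps this compatibility question entirely. Your worry about $j_*$ failing to preserve singular-support bounds is legitimate in general, but in the paper's model it is neutralized by working directly in the stabilized quotient description on the $\Omega_\alpha$, where both sides and the functor are explicit.
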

\begin{proof}
First, both sides are invariant under compactly supported Hamiltonian isotopies in $T^*V$ (cf. \cite{GKS}), so without loss of generality, we may assume that $L$ is in general position, i.e. its projection to $V$ is finite-to-one. Then we can cut out small open balls $\Omega_\alpha$ in $\cU\subset L$ by neighborhoods of the form $B_\alpha\times I_\alpha\subset V\times \bR_t$, where $B_\alpha\subset V$ and $I_\alpha\subset \bR_t$ are open balls and intervals respectively (cf. \cite{JiTr}). Then we have 
\begin{align*}
\mu\Shv_L(\Omega_\alpha;\bk)\simeq \Shv_{\Omega_\alpha}(B_\alpha\times I_\alpha;\bk)/\Loc(B_\alpha\times I_\alpha;\bk). 
\end{align*}
Using the above identification, the functor (\ref{lemma eq: def normal}) with $\cU$ replaced by $\Omega_\alpha$ is an equivalence of categories (with both sides non-canonically equivalent to $\Mod(\bk)$). Since $\cU$ has a complete cover by such $\Omega_\alpha$, and the equivalences are compatible with restrictions for any inclusion $\Omega_\beta\subset \Omega_\alpha$, it induces an equivalence of categories for $\cU$. 
\end{proof}

Let 
\begin{align*}
\mu\cShv_L(\cU;\bk)=:\Shv_{\bL_{\Gauss}}^{<0}(\cU\times V\times \bR_t;\bk),
\end{align*}
for any open $\cU\subset L$, which is clearly a sheaf of categories on $L$. 
Then Lemma \ref{lemma: equiv muShv} assures that $\mu\Shv_L\simeq \mu\cShv_L$. It is also immediate from the construction and the $\bR$-equivariance of $\Shv_{\bL_\Gauss}^{<0}(\cU\times V\times \bR_t;\bk)$ under any shift of the primitive $f_L|_\cU$ by a constant, that $\mu\cShv_L(\cU;\bk)$ is also well defined for any smooth (not necessarily exact) Lagrangian immersion. In the following, we will constantly use $\mu\cShv_L$ as our definition for the sheaf of microlocal categories on $L$.

\subsection{Morse transformations and paths in $U/O$}\label{subsec: Morse transf}
Recall the notion of Morse transformations (cf. \cite[Section 5.1]{Jin}). Thanks to the construction in Subsection \ref{subsec: alternative}, we only need to focus on Morse transformations for any conic lifting of an affine Lagrangian $\ell\subset T^*\bR^N$ that are the negative conormal bundles of smooth hypersurfaces in $\bR_{\bq_0}^N\times\bR_{t_0}\times \bR^N_{\bq_1}\times\bR_{t_1}$ of the form (up to some shifts in coordinates)
\begin{align*}
t_1-t_0-(Q+\frac{1}{2}A_\ell)[\bq_0]-\bq_0\cdot \bq_1=0, 
\end{align*}
for some nondegenerate quadratic form $Q$ on $\pi(\ell)$ and $A_\ell$ is as in Subsection \ref{subsec: spectral linear}  (after parallel shifting $\ell$ to be a linear Lagrangian). Here the negative conormal bundle means the part of conormal bundle of the hypersurface inside $(T^{*,<0}(\bR_{\bq_0}^N\times\bR_{t_0}))^-\times T^{*,<0}(\bR_{\bq_1}^N\times\bR_{t_1})$. One should think of each Morse transformation corresponding to a smooth Lagrangian in 
$T^*(\bR_{\bq_0}^N\times \bR_{\bq_1}^N)$ of the form
\begin{align}\label{eq: generating function}
\Graph(d((Q+\frac{1}{2}A_\ell)[\bq_0]+\bq_0\cdot\bq_1)),
\end{align}
which is a symplectomorphism that has a generating function in $\bq_0,\bq_1$. We stick to the convention that for a quadratic form $Q$, thought of as a symmetric matrix, $Q[\bq]=\bq^T\cdot Q\bq$ means the value of the quadratic function, that is to distinguish from the matrix-vector product $Q\bq$ (which will also show up).

The key point of considering Morse transformations is that they provide local trivializations of $\mu\cShv_L$. For any open $\cU\subset \bL$, if there is a Morse transformation $\bL_{01}$ for $\cU$ with underlying smooth hypersurface $H$, then the correspondence 
\begin{align}\label{diagram: corr H}
\xymatrix{&\cU\times H\ar[dr]^{p_0}\ar[dl]_{p_1}&\\
\cU\times \bR_{\bq_1}^N\times \bR_{t_1}& &\cU\times \bR_{\bq_0}^N\times \bR_{t_0}
}
\end{align}
induces an equivalence of categories 
\begin{align}\label{eq: corr H cU}
(p_1)_*p_0^!: \mu\cShv_{L}(\cU;\bk)\longrightarrow \Shv^{<0}_{((\Delta T^*\cU)\times\bL_{01})\circ \bL_{\Gauss}}(\cU\times \bR_{\bq_1}^N\times \bR_{t_1};\bk).
\end{align}
Since $((\Delta T^*\cU)\times\bL_{01})\circ \bL_{\Gauss}|_{\cU}$ is the negative conormal bundle of a smooth hypersurface whose projection to $\cU$ is a trivial fibration, the right-hand-side of  (\ref{eq: corr H cU}) is naturally identified with $\Loc(\cU;\bk)$.

Now we will explain some close relation between Morse transformations and paths in $U/O$ ending at the base point (the stable zero-section). Technically, we should use the stable affine Lagrangian Grassmannian instead of $U/O$, but since the difference is trivial at the topological level, we will always ignore such issues. It is clear from (\ref{eq: generating function}) that the underlying symplectomorphism of a Morse transformation is the composition of the Fourier transform, denoted by $\FT$, and the time-1 Hamiltonian map of the quadratic Hamiltonian function purely in $\bp$, given by $(Q+\frac{1}{2}A_\ell)[\bp]$.

Let 
\begin{align}\label{eq: FT_t}
\FT_t=\varphi_{\frac{1}{2}(|\bp|^2+|\bq|^2)}^{\frac{\pi}{2}t}, 0\leq t\leq 1
\end{align} 
be the path of linear symplectomorphism with $\FT_0=id$ and $\FT_1=\FT$, given by the Hamiltonian flow of the function $\frac{1}{2}(|\bp|^2+|\bq|^2)$. If we think of the contractible locus of graph like linear Lagrangians in $U/O$ as a ``fat" base point, then we can assign each Morse transformation the following path in $U(N)/O(N)\subset U/O$ starting from $\ell$ and ending at the base point:
\begin{align*}
(\varphi_{Q+\frac{1}{2}A_\ell}^t\bullet \FT_t)(\ell):=\begin{cases}&\FT_{2t},\ 0\leq t\leq \frac{1}{2}\\
&\varphi_{Q+\frac{1}{2}A_\ell}^{2t-1}\circ\FT,\ \frac{1}{2}\leq t\leq 1.
\end{cases}
\end{align*} 
This point of view is a key ingredient in our proof for the main theorem.

\subsection{The category of correspondences and the functor $\ShvSp_*^!$}\label{subsec: ShvSp}
In this subsection, we recall some basic facts about the correspondence category defined in \cite{GaRo}, and the symmetric monoidal functor $\ShvSp^!_*$ that we will use later in the proof of the main theorem. 
In \cite{Jin}, we proved based on \cite{GaRo}, \cite{higher-topos} and \cite{higher-algebra} that there is a canonically defined  symmetric monoidal functor 
\begin{align*}
\nonumber\ShvSp_!^*:\bCorr(\Slch)_{\all,\all}^{\propmap}&\longrightarrow (\bPrstL)^{\twoop}\\
\nonumber X&\mapsto \Shv(X;\Sp)\\
\Big(\xymatrix{Z\ar[r]^{p}\ar[d]_{q}&X\\
Y&
}
\Big)&\mapsto \Big(q_!p^*: \Shv(X;\Sp)\rightarrow \Shv(Y;\Sp)\Big).
\end{align*}
By the same approach, one gets the canonical symmetric monoidal functor 
\begin{align*}
\ShvSp_*^!: &\bCorr(\Slch)^{\propmap}_{\all,\all}\rightarrow \bPrstR\\
\nonumber X&\mapsto \Shv(X;\Sp)\\
\Big(\xymatrix{Z\ar[r]^{p}\ar[d]_{q}&X\\
Y&
}
\Big)&\mapsto \Big(q_*p^!: \Shv(X;\Sp)\rightarrow \Shv(Y;\Sp)\Big).
\end{align*}
The second functor is the one that we will use later. 

Recall that in \cite[Theorem 2.6, Theorem 2.11, Theorem 2.21, Proposition 2.22]{Jin}, we provided a list of constructions of 
commutative algebra objects, their modules and (right-lax) homomorphisms among them in $\bCorr(\cC)$, for $\cC$ a Cartesian symmetric monoidal $\infty$-category, out of concrete data including $\Fin_*$-objects, $\Fin_{*,\dagg}$-objects and correspondences among them. We will frequently use these, through the functoriality of $\ShvSp_*^!$, to present symmetric monoidal structures on certain microlocal sheaf categories and (right-lax) homomorphisms among them. Note that in this setting, tensor product in a sheaf category should be taken as $\overset{!}{\otimes}$.  On the other hand, (almost) all the sheaf categories that we are interested in will be some full subcategories with singular support condition in conic Lagrangians, restricted to which the functor $q_*p^!$ are continuous, thus (almost) all the categories and functors involved will be lying in  $\bPrstL$.

\section{Proof of the main theorem}\label{sec: proof}
First, let us give a more precise statement of the main theorem and make some remarks. 

\begin{thm}\label{thm: sec proof}
For any smooth immersed Lagrangian $L$ in $T^*X$, the classifying map for the sheaf of microlocal categories $\mu\cShv_L$ over a ring spectrum $\bk$ (commutative or at least $\bE_2$) is homotopic to the composition
\begin{align}\label{eq: thm sec proof}
L\overset{\vartheta\circ\gamma}{\longrightarrow} U/O\overset{BJ}{\longrightarrow} B\Pic(\bS)\longrightarrow B\Pic(\bk),
\end{align}
where $\gamma$ is the stable Gauss map, $\vartheta$ is the canonical involution on $U/O$, and $BJ$ is the the delooping of the $J$-homomorphism. 
\end{thm}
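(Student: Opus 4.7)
The plan is first to reduce to the case $X=\bR^N$, as promised in Subsection \ref{subsec: general X}, by embedding $X$ into Euclidean space of large dimension and transporting the problem along the induced Legendrian/symplectic framing. From then on I work with the alternative model $\mu\cShv_L$ built from the deformation to the normal cone in Lemma \ref{lemma: equiv muShv}, because it is this presentation that is compatible with the correspondence-categorical machinery of Section \ref{subsec: ShvSp}. The theorem will then follow formally from Proposition \ref{prop: intro key} (the main technical statement, which we must in any case prove), together with base change along $\bS\to \bk$ realized through the tautological map $B\Pic(\bS)\to B\Pic(\bk)$.

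The key geometric ingredient is a family of Morse transformations covering $L$. For each sufficiently small open $\cV\subset L$, a Morse transformation produces the correspondence of smooth hypersurfaces in (\ref{diagram: corr H}) and hence, via $\ShvSp_*^!$ applied to $(p_1,p_0)$, an equivalence $\mu\cShv_L(\cV;\bk)\simeq \Loc(\cV;\bk)$ of the kind in (\ref{eq: corr H cU}). As recalled at the end of Subsection \ref{subsec: Morse transf}, each such Morse transformation determines a path in $U/O$ from the Gauss image $\ell_x$ into the contractible locus of graph-type Lagrangians, obtained by concatenating the Fourier path $\FT_t$ of (\ref{eq: FT_t}) with the Hamiltonian flow of $(Q+\tfrac12 A_\ell)[\bp]$. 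After stabilization the space of admissible data $(Q,A_\ell)$ is a torsor for the topological monoid of nonnegative quadratic forms in $\bp$, which by (\ref{eq: intro nonneg}) has the homotopy type of $\coprod_n BO(n)$, and whose group completion is $\Omega(U/O)\simeq \bZ\times BO$ by the Bott periodicity step of Subsection \ref{subsec: Bott}. I would assemble all such data into the $\infty$-category $\QHam_L(U/O)$ over $\Open(L)^{op}$, which serves as an approximation to the pullback $\cP_L$ of the universal $\Omega(U/O)$-torsor along the stable Gauss map $\gamma:L\to U/O$.

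Next, following the strategy already carried out in \cite{Jin} but now in its enhanced form, I would quantize the Hamiltonian action of the monoid $\coprod_n BO(n)$ on $\mu\cShv_L$. Concretely, the action by the flow of $(Q+\tfrac12A_\ell)[\bp]$ is realized at the sheaf level by a correspondence in $\bCorr(\Slch)^{\propmap}_{\all,\all}$, and passing through $\ShvSp_*^!$ turns the family of Morse trivializations into a $\QHam_L(U/O)$-parametrized family of equivalences $\mu\cShv_L(\cV;\bS)\to \Loc(\cV;\bS)$. The crucial observation is that when one assembles these equivalences coherently, each element $E\in BO(n)$ acts on the fiber $\Mod(\bS)$ by tensoring with its tautological rank-$n$ virtual bundle, viewed as an element of $\Pic(\bS)$. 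This is exactly the $J$-homomorphism $J:\Omega(U/O)\to \Pic(\bS)$. Consequently, the family of trivializations descends to an equivalence $\mu\cShv_L(\cV;\bS)\simeq \Loc(\cP_L|_\cV;\bS)^{J\text{-equiv}}$, compatibly with restriction in $\cV$; this is Proposition \ref{prop: intro key}. The functoriality in $\cV$, which is the new input beyond \cite{Jin}, is where the more flexible model $\QHam_L(U/O)$ together with the construction of (right-lax) algebra/module morphisms in $\bCorr(\cC)$ from Appendix \ref{sec: Appendix} is indispensable.

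Once Proposition \ref{prop: intro key} is in hand, the theorem is essentially a formal consequence. Sheafifying the identification $\mu\cShv_L(-;\bS)\simeq \Loc(\cP_L|_{-};\bS)^{J\text{-equiv}}$ over the open cover of $L$ by coordinate neighborhoods shows that the classifying map for $\mu\cShv_L$ is that of the $J$-twist of the $\Omega(U/O)$-torsor $\cP_L$, i.e.\ the composition $L\xrightarrow{\gamma} U/O\xrightarrow{BJ} B\Pic(\bS)$; the involution $\vartheta$ enters through the sign conventions relating the spectral decomposition of Subsection \ref{subsec: spectral linear} to the normalization of $A_\ell$ used in (\ref{eq: generating function}), and should be pinned down by tracking the sign of the quadratic term $\tfrac12 A_\ell$ throughout the path construction. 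The general coefficients statement then follows by pushing forward along the canonical $B\Pic(\bS)\to B\Pic(\bk)$. I expect the two real difficulties to be: (i) making the Hamiltonian $\coprod_n BO(n)$-action genuinely functorial in open inclusions $\cV'\hookrightarrow \cV$, which is the main upgrade over \cite{Jin} and forces the Appendix \ref{sec: Appendix} machinery on us, and (ii) the careful bookkeeping that identifies the emerging character with $J$ rather than with some variant twisted by $\vartheta$ or by a shift.
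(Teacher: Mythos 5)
Your proposal follows essentially the same route as the paper: reduce to $X=\bR^N$, use the $\mu\cShv_L$ model of Lemma \ref{lemma: equiv muShv}, build $\QHam_L(U/O)$ as a correspondence-categorical approximation to $\gamma^{-1}(\eta_{U/O})$, quantize the Hamiltonian $\coprod_n BO(n)$-action via $\ShvSp_*^!$ and Appendix \ref{sec: Appendix}, and then sheafify the resulting identification $\mu\cShv_L(-;\bS)\simeq\Loc(\cP_L|_{-};\bS)^{J\text{-equiv}}$ to read off the classifying map. The one place where your account diverges slightly from the paper's bookkeeping is the origin of $\vartheta$: you attribute it to the sign normalization of $\tfrac12 A_\ell$ in the generating function, whereas the paper tracks three separate conventions (the reversed Hamiltonian flow $\varphi_{-Q}$ in Section \ref{subsec: F_L}, the path-space model of $E\Omega(U/O)$, and the $g^{-1}$ in the associated-bundle construction), of which the first two cancel and only the third survives — so your item (ii) remains to be worked out along those lines rather than through $A_\ell$ alone.
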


\begin{remark}
There is a twisted version of Theorem \ref{thm: sec proof}. Namely, one can consider the category of twisted sheaves on $X$, determined (up to homotopy) by a map $tw: X\rightarrow B\Pic(\bk)$, and the induced sheaf of microlocal categories $\mu\cShv_L^{tw}$ on $L$. By functoriality, the resulting classifying map for $\mu\cShv_L^{tw}$ is just the twist of (\ref{eq: thm sec proof}) by the composition $L\rightarrow X\overset{tw}{\rightarrow}B\Pic(\bk)$. This is analogous to the dependence on the choice of background class in the definition of Fukaya categories (cf. \cite{Seidel}). 
\end{remark}

Now let us start the proof of the theorem. For any affine Lagrangian $\ell\subset T^*\bR^{N}$,  
we fix a Legendrian lifting of $\ell$ in $T^*\bR^{N}\times \bR$, denoted by $\Lambda_\ell$.  
For any quadratic form $Q$ on $T^*\bR^N$ depending only on $\bp$,  such that $\varphi_{Q}(\FT(\ell))$ is graph like, it determines a Morse transformation for $\Lambda_{\ell}$ given by the negative conormal bundle (negative in $dt_1$) of the smooth hypersurface 
\begin{align*}
-t_1+t_0+Q[\bq_0]+\bq_0\cdot\bq_1=0
\end{align*}
in $\bR^{N}_{\bq_0}\times \bR^{N}_{\bq_1}\times \bR_{t_0}\times \bR_{t_1}$.  Let $\bL^N_{01, Q}$ denote for the corresponding Morse transformation.

\subsection{A commutative monoid $G_{Q_\flat,\ell}$ arising from a Morse transformation $\bL_{01,Q_\flat}$}

For any (stable) affine Lagrangian $\ell$, let $\Spectr(\ell)$ be the spectral decomposition of $\ell$ (after a parallel shifting to be a linear Lagrangian), i.e. the sum of that of $A_\ell$ and the eigenspace of $\infty$, and let $\Spectr^-(\ell)$ (resp. $\Spectr^+(\ell)$, $\Spectr^\infty(\ell)$, $\Spectr^I(\ell)$) be the negative (resp. positive, $\infty$, $I\subset \bR\cup\{\infty\}$) part of the spectral decomposition. The same notation (except for $\Spectr^\infty$) applies to (usual) quadratic forms, i.e. for a quadratic form $Q$, $\Spectr^-(Q)$ (resp. $\Spectr^{+}(Q)$) denotes for its negative (resp. positive) spectral part. 
For a \emph{nonnegative} quadratic form $Q$, we also call the underlying vector space of $\Spectr^+(Q)$ the \emph{support} of $Q$, and denote it by $\Supp(Q)$. For the corresponding underlying vector space of a spectral part, we replace $\Spectr$ in the notation by $\underline{\Spectr}$. 

We fix the following notions. We say a quadratic form $Q_\flat$ is negatively (resp. nonpositively, etc.) \emph{stable} on $\varinjlim\limits_N\bR^N\cong \varinjlim\limits_N(\bR^N)^*$ (under the Euclidean metric), if there exists $-\epsilon$ with $\epsilon>0$(resp. $\epsilon\geq 0$, etc.) such that for $N\gg 0$, 
\begin{align*}
Q_\flat|_{\bR^{N+k}}=Q_\flat|_{\bR^N}\oplus (-\epsilon)I_{(\bR^N)^{\perp}}, \forall k\geq 0.
\end{align*} 
For any \emph{negatively} stable $Q_\flat$ and any affine Lagrangian $\ell$ such that $\varphi_{Q_\flat}\circ\FT(\ell)$ is graph like, we assign the stable Morse transformation $\bL_{01,Q_\flat}$ for $\Lambda_\ell$,  whose underlying symplectomorphism is $\varphi_{Q_\flat}\circ\FT$. Here for any quadratic function $Q$ on $\varinjlim\limits_N\bR^N\cong\varinjlim\limits_N(\bR^N)^*$, 
\begin{align*}
\varphi_{Q}=\text{ the time-1 map of the Hamiltonian flow of }Q[\bp]. 
\end{align*}
The following constructions can be thought of as a more formal treatment of the relation between Morse transformations and paths in $U/O$, as discussed in Subsection \ref{subsec: Morse transf}.

For any stable Morse transformation $\bL_{01,Q_\flat}$ for $\Lambda_\ell$, let 
\begin{align*}
G_{Q_\flat,\ell; N}^{\lng 0\rng}=pt,&\\
G_{Q_\flat,\ell; N}^{\lng n\rng}=:\{ &(Q_i)_{i\in\lng n\rng^\circ}: \text{ each }Q_i\text{ is a nonnegative quadratic form supported on $\bR^N$};\\
&\rank (\Spectr^-(
\varphi_{\sum\limits_{i\in \lng n\rng^\circ}Q_i}\circ\varphi_{Q_\flat}(\FT(\ell))))=\sum\limits_{i\in\lng n\rng^\circ}\rank(Q_i)+\rank(\Spectr^-(\varphi_{Q_\flat}(\FT(\ell)))\},
\end{align*}
and
\begin{align*}
&VG_{Q_\flat,\ell;N}^{\lng 0\rng}=pt,\\
&VG_{Q_\flat,\ell;N}^{\lng n\rng}=\{(Q_i, \bp_i)_{i\in\lng n\rng^\circ}: \bp_i\in \Supp(Q_i)\}\subset G_{Q_\flat;N}^{\lng n\rng}\times \bR^N.
\end{align*}
Note that for any nonnegative quadratic form $Q_1$, we have 
\begin{align}\label{ineq: varphi_Q}
\rank(\Spectr^-(\varphi_{Q_1}\circ\varphi_{Q_\flat}(\FT(\ell))))\leq \rank(Q_1)+\rank(\Spectr^-(\varphi_{Q_\flat}(\FT(\ell)))). 
\end{align}
This in particular implies that for any $(Q_i)_{i\in\lng n\rng^\circ}\in G_{Q_\flat,\ell; N}^{\lng n\rng}$, we have 
\begin{align}\label{eq: rank equality Q_-}
\rank(\Spectr^-(\varphi_{\sum\limits_{i\in S}Q_i}\circ\varphi_{Q_\flat}(\FT(\ell))))=\sum\limits_{i\in S}\rank(Q_i)+\rank(\Spectr^-(\varphi_{Q_\flat}(\FT(\ell)))), 
 \end{align}
for any subset $S\subset \lng n\rng^\circ $. So we can organize $G_{Q_\flat,\ell; N}^{\lng n\rng},\ n \in \bZ_{\geq 0}$ (resp. $VG_{Q_\flat,\ell; N}^{\lng n\rng},\ n\in \bZ_{\geq 0}$) into a $\Fin_*$-object in the way that for any $f: \lng n\rng\rightarrow \lng m\rng$, we associate 
\begin{align*}
G_{Q_\flat,\ell; N}^{\lng n\rng}&\longrightarrow G_{Q_\flat,\ell; N}^{\lng m\rng}\\
(Q_j)_{j\in\lng n\rng^\circ}&\mapsto  (\sum\limits_{j\in f^{-1}(i)}Q_j)_{i\in\lng m\rng^\circ}
\end{align*}
and similarly for $VG_{Q_\flat,\ell; N}^{\lng n\rng},\ n\in \bZ_{\geq 0}$.

\begin{lemma}\label{lemma: decompose Q flag}
\item[(a)] For any nonnegative quadratic form $Q$ on $\bR^N$ of rank $k$ and for any partition $(k_1,\cdots, k_n)$ of $k$ of size $n$, the space of $n$-tuples of nonnegative quadratic forms $(Q_1,\cdots, Q_n)$ satisfying that 
\begin{align}\label{eq: lemma decompose Q flag}
\sum\limits_{i=1}^nQ_i=Q,\ \rank(Q_i)=k_i, i=1,\cdots,n,
\end{align}
is isomorphic to the partial flag variety $\Fl_{k_1,k_1+k_2,\cdots}(\Supp(Q))$. 

\item[(b)] For any active map $\lng n\rng\rightarrow \lng m\rng$, the map $G^{\lng n\rng}_{Q_\flat,\ell;N}\rightarrow G^{\lng m\rng}_{Q_\flat,\ell;N}$ is a proper fiber bundle over a subset of connected components of $G^{\lng m\rng}_{Q_\flat,\ell;N}$, and the fiber over any $(Q_i)\in G^{\lng m\rng}_{Q_\flat,\ell;N}$ in the image is diffeomorphic to the product of disjoint unions of some partial flag varieties of the underlying subspace $\Supp(Q_i)$.  
\end{lemma}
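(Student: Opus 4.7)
First I would reduce to the case where $Q$ is positive definite on its support. Since each $Q_i \geq 0$ and $\sum_i Q_i = Q$, any $v \in \ker Q$ satisfies $\sum_i Q_i(v) = 0$ with nonnegative summands, forcing $Q_i(v) = 0$; hence $\ker Q \subset \ker Q_i$ for every $i$, and everything localizes to $\Supp(Q)$, on which $Q$ is positive definite. Using $Q$ itself as an inner product on $\Supp(Q)$, I identify each $Q_i$ with a $Q$-self-adjoint nonnegative operator $A_i$ of rank $k_i$, so the constraint $\sum_i Q_i = Q$ becomes the operator equation $\sum_i A_i = \Id_{\Supp(Q)}$. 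The main linear-algebra step, which I expect to be the key hurdle, is the following: if $\Id = A_1 + \cdots + A_n$ with each $A_i \geq 0$ self-adjoint and $\sum_i \rank(A_i) = \dim$, then every $A_i$ is an orthogonal projection and the images $\mathrm{im}(A_i)$ are pairwise orthogonal. By induction this reduces to $n = 2$: from $A + B = \Id$ with $A, B \geq 0$ one has $0 \leq A \leq \Id$, so the spectrum of $A$ lies in $[0,1]$, and the rank identity rewrites as $\dim \ker(A) + \dim \ker(\Id - A) = \dim$, which forces the eigenvalues of $A$ to lie in $\{0,1\}$ and thus $A$ to be an orthogonal projection, whereupon $B = \Id - A$ is the complementary one.

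\textbf{Completing (a).} With this lemma in hand, a decomposition $(Q_i)$ satisfying (\ref{eq: lemma decompose Q flag}) corresponds bijectively to a $Q$-orthogonal splitting $\Supp(Q) = W_1 \oplus \cdots \oplus W_n$ with $\dim W_i = k_i$, via $W_i = \mathrm{im}(A_i)$ and $Q_i(v) = Q(\pi^Q_{W_i} v)$, where $\pi^Q_{W_i}$ denotes the $Q$-orthogonal projection onto $W_i$. Such a splitting is in turn equivalent, by $F_j = W_1 \oplus \cdots \oplus W_j$ with inverse $W_j = F_j \cap F_{j-1}^{\perp_Q}$, to a partial flag in $\Fl_{k_1, k_1+k_2, \ldots}(\Supp(Q))$. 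Since all the maps are given by smooth algebraic operations (spectral projection and $Q$-orthogonalization of a smoothly varying flag), the bijection is a diffeomorphism.

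\textbf{Plan for (b).} Given $(Q_j)_{j \in \lng n\rng^\circ} \in G^{\lng n\rng}_{Q_\flat,\ell;N}$ and its image $(\widetilde{Q}_i = \sum_{j \in f^{-1}(i)} Q_j)_{i \in \lng m\rng^\circ}$, a chain of inequalities using (\ref{ineq: varphi_Q}) together with the defining equality (\ref{eq: rank equality Q_-}) for $G^{\lng n\rng}_{Q_\flat,\ell;N}$ forces $\rank(\widetilde{Q}_i) = \sum_{j \in f^{-1}(i)} \rank(Q_j)$ for each $i$ and shows that $(\widetilde{Q}_i) \in G^{\lng m\rng}_{Q_\flat,\ell;N}$. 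Since the rank vector is locally constant on $G^{\lng m\rng}_{Q_\flat,\ell;N}$, the image of $f_*$ is a union of connected components (those whose rank data are compatible with $f$, with $\rank(\widetilde{Q}_i) = 0$ whenever $f^{-1}(i) = \emptyset$). The fiber over $(\widetilde{Q}_i)$ then factors as $\prod_i \{(Q_j)_{j \in f^{-1}(i)} : Q_j \geq 0,\ \sum_j Q_j = \widetilde{Q}_i,\ \sum_j \rank(Q_j) = \rank(\widetilde{Q}_i)\}$, and by part (a) each factor is the disjoint union $\coprod_{(k_j)} \Fl_{(k_j)}(\Supp(\widetilde{Q}_i))$ indexed by compositions of $\rank(\widetilde{Q}_i)$ into $|f^{-1}(i)|$ nonnegative parts. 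Local triviality follows from the smooth variation of $\Supp(\widetilde{Q}_i)$ with $\widetilde{Q}_i$ along a component of constant rank, and properness is immediate from compactness of partial flag varieties.
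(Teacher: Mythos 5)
Your proof is correct and takes essentially the same approach as the paper: the paper normalizes $Q$ by a congruence so that it has eigenvalues $0$ and $1$ (equivalently, uses $Q$ as the inner product on $\Supp(Q)$, as you do) and then observes that decompositions of $Q$ correspond to orthogonal decompositions of $\Supp(Q)$, hence to partial flags, with part (b) following from (a). You simply spell out in detail the rank/positivity argument that the paper leaves as "it is easy to see."
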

\begin{proof}
(a) Up to congruent relation, we may assume that $Q$ has eigenvalues 0 and 1, and is supported on $E\subset \bR^N$. Then it is easy to see that the decomposition in (\ref{eq: lemma decompose Q flag}) is the same as decomposing $E$ into mutually orthogonal subspaces $E_i$ with $\dim E_i=k_i, 1\leq i\leq n$. So the lemma follows. 

(b) follows directly from (a). 
\end{proof}

An immediate corollary of Lemma \ref{lemma: decompose Q flag} and \cite[Theorem 2.6]{Jin} is 
\begin{prop}
The $\Fin_*$-objects $G_{Q_\flat,\ell;N}^\bullet$ and  $VG_{Q_\flat,\ell;N}^\bullet$ represent commutative algebra objects in $\Corr(\Slch)_{\fib,\all}$. 
\end{prop}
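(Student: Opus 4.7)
The plan is to invoke \cite[Theorem 2.6]{Jin} directly. That theorem supplies a checkable criterion under which a $\Fin_*$-object $X^{\bullet}: N(\Fin_*)\to \cC$ of a Cartesian symmetric monoidal $(\infty,1)$-category $\cC$ assembles into a commutative algebra in $\bCorr(\cC)_{\fib,\all}$: after factoring each $\Fin_*$-morphism as inert-then-active, the inert legs must exhibit $X^{\lng n\rng}$ as sitting inside $(X^{\lng 1\rng})^n$ via the Segal maps in a controlled way, and the active legs must land in the designated class $\fib$ (here, proper fiber bundles). So everything reduces to two verifications, both of which are essentially content-free given Lemma \ref{lemma: decompose Q flag}.

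First, I would check the Segal / inert condition. Both $G^{\lng n\rng}_{Q_\flat,\ell;N}$ and $VG^{\lng n\rng}_{Q_\flat,\ell;N}$ are locally closed subspaces of finite products of the space of nonnegative quadratic forms on $\bR^N$ (plus a copy of $\bR^N$ in the $VG$ case), hence are locally compact Hausdorff. For an inert $\rho: \lng n\rng\to \lng 1\rng$, the induced map is coordinate projection. Assembling over the $n$ inert maps to $\lng 1\rng$, one sees that $G^{\lng n\rng}_{Q_\flat,\ell;N}$ is precisely the subspace of $(G^{\lng 1\rng}_{Q_\flat,\ell;N})^n$ cut out by the rank equality (\ref{eq: rank equality Q_-}); but by the subadditivity (\ref{ineq: varphi_Q}), this locus is both open and closed, i.e.\ a union of connected components. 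The $VG^\bullet$ case adds an affine-bundle datum $\bp_i\in \Supp(Q_i)$ fiberwise, which clearly intertwines with coordinate projections.

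Second, I would check the active condition: for each active map $f:\lng n\rng\to\lng m\rng$, the induced structure map must lie in $\fib$. This is exactly part (b) of Lemma \ref{lemma: decompose Q flag}, which identifies the induced map $G^{\lng n\rng}_{Q_\flat,\ell;N}\to G^{\lng m\rng}_{Q_\flat,\ell;N}$ as a proper fiber bundle onto a union of components, with fiber a product of (disjoint unions of) partial flag varieties of the supports $\Supp(Q_i)$ (and these are compact smooth manifolds). For $VG^\bullet$, the forgetful projection $VG^{\lng n\rng}_{Q_\flat,\ell;N}\to G^{\lng n\rng}_{Q_\flat,\ell;N}$ is a vector bundle (in fact a subbundle of a trivial bundle) compatible with the summation $(Q_j,\bp_j)\mapsto (\sum_{j\in f^{-1}(i)} Q_j,\,\sum_{j\in f^{-1}(i)}\bp_j)$, so proper-fiber-bundle-ness is preserved in the combined assembly.

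The main potential obstacle is ensuring that the active maps are honest fiber bundles rather than merely proper submersions, since Theorem 2.6 of \cite{Jin} requires the local triviality to define the horizontal legs of the correspondences assigned to non-inert $\Fin_*$-morphisms. This is handled cleanly by the flag-variety description of the fibers in Lemma \ref{lemma: decompose Q flag}(a): the orthogonal group of $\Supp(Q_i)$ acts transitively on each such flag variety, and this action varies continuously in $(Q_i)$ over each connected component of the target, providing the desired local trivializations. With these two points in hand, \cite[Theorem 2.6]{Jin} applies verbatim and yields the two claimed commutative algebra objects in $\bCorr(\Slch)_{\fib,\all}$.
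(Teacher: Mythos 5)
Your proposal matches the paper's intended approach exactly: the paper presents the proposition as ``an immediate corollary of Lemma~\ref{lemma: decompose Q flag} and \cite[Theorem 2.6]{Jin}'' with no further elaboration, and your fill-in correctly identifies the two verifications (inert/Segal condition and active maps landing in $\fib$) as the content of that citation, with Lemma~\ref{lemma: decompose Q flag}(b) supplying the proper-fiber-bundle property and the flag-variety description resolving the local triviality concern. One small caveat: your claim that the rank-equality locus inside $(G^{\lng 1\rng}_{Q_\flat,\ell;N})^n$ is ``both open and closed'' is stronger than what subadditivity (\ref{ineq: varphi_Q}) directly gives. Since $\rank(\Spectr^-(-))$ is lower semicontinuous and bounded above by the right-hand side of (\ref{eq: rank equality Q_-}), the locus where equality holds is \emph{open}, but closedness (equivalently, that the strict-inequality locus is also open) is not automatic from the stated inequality; you should either argue that the negative spectral count is locally constant along $(G^{\lng 1\rng})^n$ in this setting, or check whether the exact hypothesis of \cite[Theorem 2.6]{Jin} on inert/Segal maps requires only openness or a Cartesianness condition rather than a union-of-components statement.
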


Recall the following notions from \cite{Jin}:
\begin{align*}
&\lng n\rng_\dagger=\lng n\rng\sqcup\{\dagg\};
\end{align*}
a morphism 
\begin{align*}
f: \lng n\rng_\dagg\rightarrow \lng m\rng_\dagg
\end{align*} 
is a map satisfying $f(*)=*, f(\dagg)=\dagg$; these together defines an ordinary category $\Fin_{*,\dagg}$ with objects $\lng n\rng_\dagg, n\geq 0$ and morphisms as above. 

Now we endow $G_{Q_\flat,\ell;N}\times \bR^{N}_{\widetilde{\bq}_1}\times\bR_{\widetilde{t}_1}$, with the following module structure over $VG_{Q_\flat,\ell;N}$ through \cite[Theorem 2.21]{Jin}:
\begin{align}\label{eq: module G_{Q_-,ell;N}}
(G_{Q_\flat,\ell;N}\times \bR^{M+k}_{\widetilde{\bq}_1}\times\bR_{\widetilde{t}_1})^{\lng n\rng_\dagg}=\{&(Q_1,\bp_1,\cdots, Q_n,\bp_n; Q_\dagg, \widetilde{\bq}_1,\widetilde{t}_1): (Q_\bullet)_{\bullet\in \lng n \rng^\circ_\dagg}\in G_{Q_\flat,\ell; N}^{\lng n+1\rng},\\
&\bp_i\in \Supp(Q_i), i\in \lng n\rng^\circ\},
\end{align}
and for any $f: \lng n\rng_\dagg\rightarrow \lng m\rng_\dagg$, 
\begin{align}\label{eq: module G_univ R M}
\nonumber(G_{Q_\flat,\ell;N}\times \bR^{M+k}_{\widetilde{\bq}_1}\times\bR_{\widetilde{t}_1})^{\lng n\rng_\dagg}\longrightarrow &(G_{Q_\flat,\ell;N}\times \bR^{M+k}_{\widetilde{\bq}_1}\times\bR_{\widetilde{t}_1})^{\lng m\rng_\dagg}\\
((Q_1,\bp_1,\cdots, Q_n,\bp_n; Q_\dagg, \widetilde{\bq}_1,\widetilde{t}_1)\mapsto &((\sum\limits_{j\in f^{-1}(i)}Q_j, \sum\limits_{j\in f^{-1}(i)}\bp_j)_{i\in\lng m\rng^\circ}; Q_\dagg+\sum\limits_{j\in f^{-1}(\dagg)\backslash\{\dagg\}}Q_j, \widetilde{\bq}_1-2\sum\limits_{j\in f^{-1}(\dagg)\backslash\{\dagg\}}\bp_j, \\
\nonumber&\widetilde{t}_1-\sum\limits_{j\in f^{-1}(\dagg)}Q_j[\bp_j'],
\end{align}
where $\bp_j'$ is any vector satisfying that $Q_j\bp_j'=\bp_j$. 
By the assumption on $Q_j, j\in\lng n\rng^\circ$, there exists a vector $\bp'$ satisfying 
$Q_j\bp'=\bp_j, j\in f^{-1}(\dagg)\backslash\{\dagg\}$, hence 
we have $\sum\limits_{j\in f^{-1}(\dagg)}Q_j[\bp_j']=(\sum\limits_{j\in f^{-1}(\dagg)}Q_j)[\bp']$. This shows that 
the module structure (\ref{eq: module G_univ R M}) is well defined.

For any $G_N\subset G_{Q_\flat,\ell;N}$ (note that $G_N$ here is not specific to the particular model of $\bigsqcup\limits_{n\leq N}BO(n)$ as in \cite{Jin}), 
if for any inert map $\lng n\rng\rightarrow \lng k\rng$ in $\Fin_*$, the induced map
\begin{align}\label{eq: cond G_N}
G_N\underset{G_{Q_\flat,\ell;N}}{\times} G_{Q_\flat,\ell;N}^{\lng n\rng}\rightarrow G_{Q_\flat,\ell;N}^{\lng k\rng}\text{ has image in }G_N\underset{G_{Q_\flat,\ell;N}}{\times} G_{Q_\flat,\ell;N}^{\lng k\rng}, 
\end{align}
where the map $G_{Q_\flat,\ell;N}^{\lng n\rng}\rightarrow G_{Q_\flat,\ell;N}$ is through the unique active map $\lng n\rng\rightarrow\lng 1\rng$
then 
we can define the $\Fin_*$-object 
\begin{align}\label{eq: G_N requirement}
G_N^{\bullet}=G_N\underset{G_{Q_\flat,\ell;N}}{\times} G_{Q_\flat,\ell;N}^{\bullet}, 
\end{align}
which  is automatically a commutative algebra object in $\Corr(\Slch)_{\fib,\all}$, equipped with an algebra homomorphism $G_N\rightarrow G_{Q_\flat,\ell;N}$. Similarly, the $\Fin_*$-object $VG_N^\bullet=G_N\underset{G_{Q_\flat,\ell;N}}\times VG_{Q_\flat,\ell;N}^\bullet$ is a commutative algebra object in $\Corr(\Slch)_{\fib,\all}$, and we can uniquely associate a module structure on $G_N\times \bR_{\widetilde{\bq}_1}^{M+k}\times \bR_{\widetilde{t}_1}$ over $VG_N$ as above in $\Corr(\Slch)$. 

If we choose $Q_\flat$ to be a negatively stable quadratic Hamiltonian, then we let
\begin{align*}
&G_{Q_\flat,\ell}^\bullet=:\bigcup\limits_{N}G^\bullet_{Q_\flat,\ell;N},\ VG_{Q_\flat,\ell}^\bullet=:\bigcup\limits_{N}VG^\bullet_{Q_\flat,\ell;N}.
\end{align*}
Though these no longer give objects in $\Corr(\Slch)$ (they are like ind-objects), a standard way to view and study them is through the truncations at finite levels $N$.

\subsection{The fibrant simplicial category $\QHam(U/O)$}
\begin{definition}\label{def: QHam(U/O)}
Consider the following fibrant simplicial category, denoted by $\QHam(U/O)$:
\begin{itemize}
\item[(i)] The objects are $(\cU, Q_{\flat}, G^\bullet, M^{\bullet,\dagg})$, where $\cU\overset{\text{open}}{\subset }U/O$, $Q_\flat$ is a negatively stable quadratic Hamiltonian
 depending only on $\bp$, and 
 \begin{align*}
 (G^\bullet=\bigcup\limits_{N\geq 0} G_N^\bullet, M^{\bullet,\dagg}=\bigcup\limits_{N\geq 0} M_N^{\bullet,\dagg})
 \end{align*}
 is a pair of $\Fin_*$ and $\Fin_{*,\dagg}$ objects in the ordinary category of paracompact Hausdorff spaces, with a filtration given by  $\Fin_*$ and $\Fin_{*,\dagg}$ objects $(G_N^\bullet, M_N^{\bullet, \dagg})$ in $\Slch$, that determine $\Fun(\bZ^{op}_{\geq 0}, \Mod^{N(\Fin_*)}(\Corr(\Slch)))$. 
 
They need to satisfy the following conditions.\\

\noindent ia) The Hamiltonian map $\varphi_{Q_\flat}$ takes the Fourier transform of all elements in $\cU$ to graph type linear Lagrangians.\\

\noindent ib) We have $G^\bullet\subset \bigcap\limits_{\ell\in\cU}G_{Q_\flat,\ell}^\bullet$, $G_N^\bullet=G^\bullet\cap  \bigcap\limits_{\ell\in\cU}G_{Q_\flat,\ell;N}^\bullet$ and the diagram
\begin{align*}
\xymatrix{G^{\lng n\rng}\ar[r]\ar[d]&G^{\lng n\rng}_{Q_\flat,\ell}\ar[d]\\
G^{\lng 1\rng}\ar[r]&G^{\lng 1\rng}_{{Q_\flat},\ell}
}
\end{align*}
is Cartesian for every $n$ and $\ell\in \cU$. Equivalently, this means the embeddings $G_N^\bullet\subset G_{{Q_\flat},\ell;N}^\bullet$ induce commutative algebra homomorphisms from the latter to the former in $\CAlg(\Corr(\Slch))$. 
We require that $G\hookrightarrow G_{Q_\flat,\ell}$ is a homotopy equivalence and the positive eigenvalues of elements in $G$ have a uniform finite upper bound, and the above implies that
\begin{align*}
\prod\limits_{j\in\lng n\rng^\circ} G^{\{j,*\}}\overset{h.e.}{\simeq }G^{\lng n\rng}\overset{h.e.}{\simeq} G^{\lng n\rng}_{{Q_\flat},\ell}\overset{h.e.}{\simeq} \prod\limits_{j\in\lng n\rng^\circ} G^{\{j,*\}}_{{Q_\flat},\ell}.
\end{align*}

\noindent ic) For any $\ell\in \cU$, let $V^-_{\ell, Q_\flat}=\Spectr^-(\varphi_{Q_\flat}(\FT(\ell)))$. We require that for any subspace $E\subset V^-_{\ell, Q_\flat} $ the space of subspaces that can be realized as graphs of linear functions from $E$ to $E^\perp$ (in $\varinjlim\limits_N\bR^N$) and that intersect trivially with the support of every element in $G$ is contractible.\\

\noindent id) $M^{\bullet, \dagg}$ is a free module of $G^\bullet$ generated by $Q_\flat$, i.e. $M^{\lng n\rng_\dagg}=\{((Q_j)_{j\in\lng n\rng^0}, Q_\dagg+Q_\flat): (Q_\bullet)_{\bullet\in {\lng n\rng}_\dagg^\circ}\in G^{\lng n+1\rng}\}$. 
\begin{align*}
&\dim\Spectr^-(\varphi_{Q_\dagg+Q_\flat}(\FT(\ell))))=\dim\Spectr^-(\varphi_{Q_\flat}(\FT(\ell)))+\rank(Q_\dagg).
\end{align*}
holds for all $\ell\in \cU$. 
This implies that $\varphi_{Q_\dagg+Q_\flat}(\FT(\ell))$ is of graph type, and with (ib), this implies that for any $((Q_j)_{j\in\lng n\rng^0}, Q_\dagg+Q_\flat)\in M^{\lng n\rng_\dagg}$, the rank equality 
\begin{align}\label{eq: rank, QHam}
&\dim\Spectr^-(\varphi_{\sum\limits_{\bullet\in\lng n\rng^0_\dagg}Q_\bullet}(\varphi_{Q_\flat}(\FT(\ell))))=\\
&\dim\Spectr^-(\varphi_{Q_\flat}(\FT(\ell)))+\sum\limits_{\bullet\in\lng n\rng_\dagg^\circ}\rank(Q_\bullet)
\end{align}
holds for all $\ell\in \cU$. Moreover, we require that the spectral decomposition of $\FT(\varphi_{Q_\dagg+Q_\flat}(\FT(\ell)))$ to have eigenvalues away from a fixed open interval containing $0$, for all $\ell\in \cU$ and $Q_\dagg+Q_\flat\in M$. \\

\item[(ii)] A morphism $\fj_{Q_{12}}: (\cU_1,Q_\flat^{(1)}, G_1^\bullet, M_1^{\bullet,\dagg})\rightarrow (\cU_2,Q_\flat^{(2)}, G_2^\bullet, M_2^{\bullet,\dagg})$, depending on a nonnegative quadratic form $Q_{12}\in G_2$, consists of the following data.\\

\noindent iia) $\cU_2\subset \cU_1$,  $G_1^\bullet$ is included in $G_2^\bullet$, and $\fj_{Q_{12}}$ determines a morphism $(G_1^\bullet, M_1^{\bullet,\dagg})\rightarrow (G_2^\bullet, M_2^{\bullet,\dagg})$ in $\Mod^{N(\Fin_*)}(\Corr(\Slch))$, sending $Q_\flat^{(1)}$ to $Q_\flat^{(2)}+Q_{12}$.\\

\noindent  iib) we have 
\begin{align*}
\widetilde{Q}=Q_\flat^{(2)}+Q_{12}-Q_\flat^{(1)},
\end{align*}
is nonnegative and it satisfies 
\begin{align}\label{eq: no rank contribution}
&\dim\Spectr^-(\varphi_{\widetilde{Q}+Q_\flat^{(1)}}(\FT(\ell))))=\dim \Spectr^-(\varphi_{Q_\flat^{(1)}}(\FT(\ell))),
\end{align}
for any $\ell\in \cU_2$.  Note that the nonnegativity of $\widetilde{Q}$ implies that it has \emph{no contribution} to any rank equality, i.e. 
\begin{align*}
&\dim\Spectr^-(\varphi_{\widetilde{Q}}(\varphi_{Q_\flat^{(1)}+Q_1}(\FT(\ell))))=\dim \Spectr^-(\varphi_{Q_\flat^{(1)}+Q_1}(\FT(\ell)))\\
=&\dim \Spectr^-(\varphi_{Q_\flat^{(1)}}(\FT(\ell)))+\rank(Q_1),\ \forall Q_1\in G_1,\ \ell\in \cU_2. 
\end{align*}
From this, we see that $\fj_{Q_{12}}$ (uniquely) exists if and only if $G_2\supset G_1\cup (G_1+Q_{12})$, where $G+Q_{12}=\{Q_1+Q_{12}: Q_1\in G\}$. \\

\item[(iii)]  For any two objects $(\cU_1,Q_\flat^{(1)}, G_1^\bullet, M_1^{\bullet, \dagg})$ and $(\cU_2,Q_\flat^{(2)}, G_2^\bullet, M_2^{\bullet, \dagg})$, we take the space of $\fj_{Q_{12}}$ endowed with the induced topology from the space of quadratic forms in $\bp$.  The composition of morphisms are defined in the most natural way: for any $\fj_{Q_{i,i+1}}: (\cU_i,Q_\flat^{(i)}, G_i^\bullet, M_i^{\bullet,\dagg})\rightarrow (\cU_{i+1},Q_\flat^{({i+1})}, G_{i+1}^\bullet, M_{i+1}^{\bullet,\dagg})$, $i=1,2$, the composition $\fj_{Q_{23}}\circ\fj_{Q_{12}}$ is equal to $\fj_{Q_{12}+Q_{23}}$. It is not hard to see that $\fj_{Q_{12}+Q_{23}}$ is well defined. Let $\widetilde{Q}_i=Q_\flat^{(i+1)}+Q_{i,i+1}-Q_\flat^{(i)}$. Then $\widetilde{Q}_1+\widetilde{Q}_2$ satisfies condition (\ref{eq: no rank contribution}). Indeed, 
\begin{align*}
&\dim\Spectr^-\varphi_{\widetilde{Q}_1+\widetilde{Q}_2+Q_\flat^{(1)}}(\FT(\ell)))=\dim \Spectr^-(\varphi_{Q_\flat^{(2)}+Q_{12}+\widetilde{Q}_2}(\FT(\ell)))\\
=&\dim \Spectr^-(\varphi_{Q_\flat^{(2)}+Q_{12}}(\FT(\ell)))=\dim\Spectr^-\varphi_{\widetilde{Q}_1+Q_\flat^{(1)}}(\FT(\ell)))= \dim\Spectr^-\varphi_{Q_\flat^{(1)}}(\FT(\ell))).
\end{align*}
Taking the singular simplicial complex of each mapping space defines the mapping simplicial sets and the composition rules for them. 
 \end{itemize}
 \end{definition}

In the following, unless otherwise specified, we will equally view $\QHam(U/O)$ as an $\infty$-category through the coherent nerve functor.

\begin{lemma}\label{lemma: QHam objects}
For any $\ell_0\in U/O$, there exists an object $(\cU,Q_\flat,G^\bullet, M^{\bullet,\dagg})$ in $\QHam(U/O)$ such that $\cU$ contains $\ell_0$. 
\end{lemma}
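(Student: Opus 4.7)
The plan is to produce the required datum constructively. First, I would choose $Q_\flat$ to be a negatively stable quadratic form such that $\varphi_{Q_\flat}(\FT(\ell_0))$ is graph type. A natural choice is $Q_\flat = -\epsilon\cdot\mathrm{Id}$ for $\epsilon>0$ small and generic (avoiding $-2\epsilon$ being an eigenvalue of $\ell_0$). Using the spectral description in Subsection \ref{subsec: spectral linear}, one checks directly that $\FT$ swaps the $0$ and $\infty$ eigenspaces, then the shear $\varphi_{-\epsilon\,\mathrm{Id}}$ sends the resulting $\infty$-direction to finite slope $-1/(2\epsilon)$, while each other slope $-1/\mu$ becomes $-1/(\mu+2\epsilon)$, all finite. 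Hence (ia) holds at $\ell_0$, and by continuity of the spectral decomposition, on a sufficiently small open neighborhood $\cU\subset U/O$ of $\ell_0$.

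Next I would build $G^\bullet$. Fix a subspace $W\subset\varinjlim_N\bR^N$ chosen so that $W$ is orthogonal to $V^-_{\ell_0,Q_\flat}$, and shrink $\cU$ if necessary so that this transversality persists for every $\ell\in\cU$. Let $G_N$ be the space of nonnegative quadratic forms supported on finite-dim subspaces of $W$, of rank $\leq N$, with positive eigenvalues in a fixed compact interval $(0,C]$, and assemble the $G_N$'s into $\Fin_*$-objects $G_N^\bullet$ via pairwise orthogonally supported tuples as in Lemma \ref{lemma: decompose Q flag}. By continuity and the compactness of bounded subsets of $G_N$, after further shrinking $\cU$ the rank equality (\ref{eq: rank equality Q_-}) holds uniformly for every $\ell\in\cU$, so that $G^\bullet\subset\bigcap_{\ell\in\cU}G^\bullet_{Q_\flat,\ell}$. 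The Cartesian square in (ib) is then immediate from Lemma \ref{lemma: decompose Q flag}. Both $G$ and each $G_{Q_\flat,\ell}$ are models for $\coprod_n BO(n)$ (the rank-$n$ stratum of nonnegative forms deformation retracts to the Grassmannian of $n$-planes in $W$ times the positive scalars), yielding the homotopy equivalence required in (ib).

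Finally, take $M^{\bullet,\dagg}$ to be the free $G^\bullet$-module generated by $Q_\flat$, as spelled out in (id): $M^{\lng n\rng_\dagg}=\{((Q_j)_{j\in\lng n\rng^\circ},Q_\dagg+Q_\flat):(Q_\bullet)_{\bullet\in\lng n\rng_\dagg^\circ}\in G^{\lng n+1\rng}\}$. The rank identity (\ref{eq: rank, QHam}) follows by the same argument as for $G^\bullet$; the spectral condition at the end of (id) (that eigenvalues of $\FT(\varphi_{Q_\dagg+Q_\flat}(\FT(\ell)))$ stay in the complement of a fixed neighborhood of $0$) is an open condition on $\ell$, and since $G$ has a uniform upper bound on positive eigenvalues and $Q_\flat$ is fixed, this can be achieved by one more shrinking of $\cU$. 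The main obstacle is verifying (ic): one needs, for every $E\subset V^-_{\ell,Q_\flat}$, the space of graphs of linear maps $E\to E^\perp$ avoiding $\Supp(Q)$ for all $Q\in G$ to be contractible. Because all supports of elements of $G$ lie in $W$ and $E\cap W=0$ by construction, the condition "graph disjoint from $W$" cuts out an open convex, hence contractible, subset of $\Hom(E,E^\perp)$, completing the verification.
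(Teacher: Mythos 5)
The proposal follows the same broad outline as the paper's proof (choose $Q_\flat$, build $G^\bullet$ from nonnegative forms with constrained supports, take the free module $M^{\bullet,\dagg}$), but the specific choice of $G^\bullet$ does not satisfy condition (ib), and the verification of (ic) is also flawed.

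The critical gap is in (ib). Write $\tilde\ell=\FT\circ\varphi_{Q_\flat}\circ\FT(\ell)$. The rank equality (\ref{eq: rank equality Q_-}) for $Q_1\in G$ translates into the requirement that $\rank\Spectr^-(\tilde\ell+2Q_1)=\rank Q_1+\rank\Spectr^-(\tilde\ell)$ (equivalently, the positive part after the Fourier flip grows by exactly $\rank Q_1$). Your $G$ allows $Q_1$ with arbitrarily small positive eigenvalues (you only imposed the \emph{upper} bound $(0,C]$), but the rank equality fails for $Q_1$ with eigenvalues too close to $0$: on a direction where $\tilde\ell_0$ is strictly negative, $\tilde\ell_0+2Q_1$ remains negative and the rank does not increase. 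This failure occurs already at $\ell=\ell_0$, so it cannot be cured by shrinking $\cU$ or invoking continuity. The paper handles this precisely by imposing a \emph{lower} bound $Q_1|_{b_{\ell_0}}\geq(1/2+\epsilon)I_{\proj_{b_{\ell_0}}(\Supp Q_1)}$ in the definition of $G_{K,\epsilon,R,b_{\ell_0}}$ (display (\ref{eq: G_{K,epsilon, R, b_ell}})). Relatedly, the constraint ``$W\perp V^-_{\ell_0,Q_\flat}$'' is too weak: the orthogonal complement of $V^-_{\ell_0,Q_\flat}$ contains the \emph{zero}-eigenspace $V^0_{\ell_0,Q_\flat}$ of $\varphi_{Q_\flat}(\FT(\ell_0))$, and if $\Supp Q_1$ lies in that zero-eigenspace, $\varphi_{Q_1}$ fixes the relevant part of the Lagrangian entirely, so the negative rank does not move at all while $\rank Q_1>0$. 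The paper avoids this by constraining $\Supp Q_1$ to lie in a cone $B_K(b_{\ell_0},b_{\ell_0}^\perp)$ around the \emph{positive}-eigenspace $b_{\ell_0}$ of $\varphi_{Q_\flat}(\FT(\ell_0))$, bounded away from $b_{\ell_0}^\perp$.

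Your verification of (ic) also has a gap. The claim that ``$E\cap W=0$ implies $\Gamma_\phi\cap W=0$ automatically'' requires $E\perp W$, which you only arrange at $\ell=\ell_0$; for nearby $\ell$, $E\subset V^-_{\ell,Q_\flat}$ is merely transverse to $W$, not orthogonal. With mere transversality the bad locus $\{\phi:\Gamma_\phi\cap\Supp Q\neq 0\text{ for some }Q\in G\}$ is a nontrivial determinantal-type subset whose complement need not be convex (even for $\dim E=\dim W=1$ it is the complement of a point), so ``open convex, hence contractible'' does not go through. The paper establishes contractibility by an explicit deformation retraction via a one-parameter subgroup of $GL$ (Figure \ref{figure: ic}). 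Finally, the claim that your $G$ is a model for $\coprod_n BO(n)$ is not correct as stated: any $W$ orthogonal to the infinite-dimensional $V^-_{\ell_0,Q_\flat}$ is of bounded dimension, so your $G$ has uniformly bounded rank — which, as in the paper, is fine, but it is not all of $\coprod_n BO(n)$, and the homotopy equivalence $G\simeq G_{Q_\flat,\ell}$ still needs to be argued directly.

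Your step (ia), choosing $Q_\flat=-\epsilon\cdot\Id$ with $\epsilon$ generic, is sound (the paper in fact normalizes further, asking $\varphi_{Q_\flat}(\FT(\ell_0))$ to have no negative spectral part, which simplifies the spectrum to $\{0,1\}$ and feeds cleanly into Lemma \ref{lemma: negative Q_flat dominates}). But the construction of $G^\bullet$ is where the argument breaks.
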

\begin{proof}
Given any $\ell_0\in U/O$, choose a negatively stable $Q_\flat$ such that $\varphi_{Q_\flat}(\FT(\ell_0))$ is of graph type and has no negative spectrum part. Up to congruent equivalences, we may assume that $\varphi_{Q_\flat}(\FT(\ell_0))$ has only eigenvalues $0$ and/or $1$. The Fourier transform of $\varphi_{Q_\flat}(\FT(\ell_0))$ has spectrum concentrated in $-1$ and $\infty$. Take an open neighborhood $\widetilde{\cU}$ of $\FT(\varphi_{Q_\flat}(\FT(\ell_0)))$ in $U/O$ by choosing (i) a small neighborhood $I_{-1}$ and $I_\infty$ of $-1$ and $\infty$ respectively on the spectrum circle in Figure \ref{Figure: spectrum circle}; (ii) an open neighborhood $\cV$ of $b_{\ell_0}^\perp$ in $\Gr(\dim b_{\ell_0}^\perp, \infty)$--then $\widetilde{\cU}$ consists of linear Lagrangians $\widetilde{\ell}$ with $\underline{\Spectr}^{I_\infty} \widetilde{\ell}\in \cV$ and $\Spectr(\widetilde{\ell})=\Spectr^{I_{-1}}(\widetilde{\ell})\oplus \Spectr^{I_\infty}(\widetilde{\ell})$.  Let $\cU=\FT\circ\varphi_{-Q_\flat}\circ \FT(\widetilde{\cU})$.
\begin{figure}[h]
\begin{tikzpicture}
\draw (0,0) circle [radius=1.5];
\filldraw (1.5,0) circle (1pt) node [right] {$\infty$};
\draw (1.5,0) node [below] {${\substack{\\ \\ \uparrow\\ b_{\ell_0}^\perp}}$};
\draw[cyan] (1.5,0) circle (6pt);
\filldraw ({1.5* cos(0.75*pi r) }, {1.5* sin(0.75*pi r) }) circle (1pt) node [left] {$-1$};
\draw ({1.5* cos(0.75*pi r) }, {1.5* sin(0.75*pi r) }) node [above] {${\substack{b_{\ell_0}\\ \downarrow\\ \\ }}$};
\draw[cyan] ({1.5* cos(0.75*pi r) }, {1.5* sin(0.75*pi r) }) circle (6pt); 
 \filldraw ({1.5* cos(-0.75*pi r) }, {1.5* sin(-0.75*pi r) }) circle (1pt) node [left] {$1$};
 \filldraw({-1.5}, {0}) circle (1pt) node[left] {$0$};
\end{tikzpicture}
\caption{The spectrum of $\FT(\varphi_{Q_\flat}(\FT(\ell_0)))$}\label{Figure: spectrum circle}
\end{figure}

For any fixed $K>0$,  $\epsilon>0$, let 
\begin{align}\label{eq: G_{K,epsilon, R, b_ell}}
G_{K, \epsilon, R, b_{\ell_0}}=\{&Q_1\in G_{\varphi_{Q_\flat},\ell_0}: \forall v\in \Spectr^+(Q_1),\ |\proj_{b_{\ell_0}^\perp} v|\leq K |\proj_{b_{\ell_0}} v|;\\
 \nonumber&Q_1|_{b_{\ell_0}}\geq (1/2+\epsilon)I_{\proj_{b_{\ell_0}}(\Supp(Q_1))}, Q_1\leq R\cdot I\}.
\end{align} 
Since $G_{K, \epsilon, R, b_{\ell_0}}$ satisfies (\ref{eq: cond G_N}), it uniquely determines a commutative algebra (ind-)object in $\Corr(\Slch)$. Let $M_{K,\epsilon, R, b_{\ell_0}}^{\bullet,\dagg}$ be the free module of $G_{K, \epsilon, R, b_{\ell_0}}^\bullet$  generated by the element $Q_\flat$. 
We show that for appropriate choices of $I_{-1}, I_\infty$ and $\cV$, on the above defined $
\cU$ containing $\ell_0$, $\varphi_{Q_1+Q_\flat}(\FT(\ell)))$ satisfies the rank equality
\begin{align*}
\dim \Spectr^-(\varphi_{Q_1+Q_\flat}(\FT(\ell))))=\dim\Spectr^-(\varphi_{Q_\flat}(\FT(\ell)))+\rank(Q_1). 
\end{align*}
 for all $Q_1\in G_{K,\epsilon,R, b_{\ell_0}}$ and $\ell\in \cU$. Moreover, the eigenvalues of $\FT(\varphi_{Q_1+Q_\flat}(\FT(\ell)))$ are away from a fixed open interval containing $0$.

 For any $\ell\in \cU$, we have
\begin{align*}
\Spectr (\FT(\varphi_{Q_1+Q_\flat}(\FT(\ell)))))=\Spectr(\widetilde{\ell}+2Q_1),
\end{align*}
 where $\widetilde{\ell}=\FT\circ \varphi_{Q_\flat}\circ\FT(\ell)$ and $\widetilde{\ell}+2Q_1$ means the sum as generalized quadratic forms. 
 We just need to show that for sufficiently small $I_{-1}$, $I_\infty$ and $\cV$, we can make 
\begin{align}
\label{eq: proof spectr ell, Q_dagg, -}&\dim \Spectr^+(\widetilde{\ell}+2Q_1)\geq \dim \Spectr^+(\widetilde{\ell}) +\rank(Q_1),\\
\label{eq: proof spectr ell, Q_dagg, +}&\dim \Spectr^-(\widetilde{\ell}+2Q_1)\geq \dim \Spectr^-(\widetilde{\ell})-\rank(Q_1). 
\end{align}
Note that consequently, the above inequalities are equalities. Consider the subspace $\underline{\Spectr}^+(\widetilde{\ell})\oplus \proj_{\underline{\Spectr}^{I_{-1}}(\widetilde{\ell})}(\underline{\Spectr}^+(Q_1))$. By shrinking $\cV$ and $I_{-1}$ if necessary, we can find $\delta>0$ such that 
the restriction of $Q_1\in G_{K,\epsilon,R, b_{\ell_0}}$ to $ \proj_{\underline{\Spectr}^{I_{-1}}(\widetilde{\ell})}(\underline{\Spectr}^+(Q_1)),\ \widetilde{\ell}\in \widetilde{\cU}$, has eigenvalues all greater than $1/2+\delta$, and then $\widetilde{\ell}+2Q_1|_{\proj_{\underline{\Spectr}^{I_{-1}}(\widetilde{\ell})}(\underline{\Spectr}^+(Q_1))}$ has eigenvalues greater than $2\delta$. Since for any $u\in \underline{\Spectr}^{+}(\widetilde{\ell})$ and $v\in \proj_{\underline{\Spectr}^{I_{-1}}(\widetilde{\ell})}(\underline{\Spectr}^+(Q_1))$, $\widetilde{\ell}(u,v)=0$ and $|Q_1(u,v)|<R|u||v|$, we have  
\begin{align*}
(\widetilde{\ell}+2Q_1)(u+v, u+v)=&(\widetilde{\ell}+2Q_1)(u,u)+(\widetilde{\ell}+2Q_1)(v,v)+2(\widetilde{\ell}+2Q_1)(u,v)\\
\geq &R'|u|^2+2\delta|v|^2-4R|u||v|>0,
\end{align*}
for sufficiently large $R'$ achievable by shrinking $\cV$, $I_1$ and $I_\infty$. This shows that $(\widetilde{\ell}+2Q_1)$ is postive definite on $\underline{\Spectr}^{+}(\widetilde{\ell})\oplus \proj_{\underline{\Spectr}^{I_{-1}}(\widetilde{\ell})}(\underline{\Spectr}^+(Q_1))$, and this confirms (\ref{eq: proof spectr ell, Q_dagg, -}). In a similar way, we can show that $\widetilde{\ell}+2Q_1$ is negative definite on $\underline{\Spectr}^{(I_\infty\backslash\{\infty\})^-}(\widetilde{\ell})\oplus (\underline{\Spectr}^{I_{-1}}(\widetilde{\ell})\cap \underline{\Spectr}^+(Q_1|_{\underline{\Spectr}^{I_{-1}}(\widetilde{\ell})})^{\perp})$, 
and so (\ref{eq: proof spectr ell, Q_dagg, +}) is established as well. Along the way, we also see that the eigenvalues of $\FT(\varphi_{Q_1+Q_\flat}(\FT(\ell)))$ are away from the open interval $(-2\delta, 2\delta)$. The above confirms that $(\cU,Q_\flat, G^\bullet_{K,\epsilon, R, b_{\ell_0}}, M_{K, \epsilon, R, b_{\ell_0}}^{\bullet, \dagg})$ satisfies ia), ib) and id).

\begin{figure}[h]
\begin{tikzpicture}
\draw[thick] (-6,0)--(-2,0);
\draw[thick] (0,0)--(6,0) node [below] {$b_{\ell_0}$};
\draw[thick, dashed] (-2,0)--(0,0);
\draw[thick] (-2, 2)--(-2,-4)--(2,-2)--(2,4)--(-2,2);
\draw (1.7, 3.4) node  {$b_{\ell_0}^\perp$}; 
 \draw (6,0) ellipse (20pt and 30pt);
  \draw[dashed] (-6, 1.08)--(0,0);
 \draw (0,0)--(6, -1.08);
 \draw (-6, -1.08)--(6, 1.08);
  \draw(-6,1.08)  arc[x radius =20pt , y radius = 30pt, start angle= 90, end angle= 270]; 
   \draw[dashed](-6,-1.08)  arc[x radius =20pt , y radius = 30pt, start angle= 270, end angle= 450]; 
   \draw (-6,-1.08)  arc[x radius =20pt , y radius = 30pt, start angle= 270, end angle= 340];
 \draw[blue, thick] (0,0)--(3, 3) node[right] {$E$};
 \draw[blue, thick, dashed] (0,0)--(-2,-2);
 \draw[blue, thick] (-2,-2)--(-3,-3);
 \draw[blue, dashed] (3,3)--(1.8,3);
 \draw[blue] (5.3, 0.3)--(6.5, -0.8);
 \draw[blue](-6.5, 0.8)--(-5.3, -0.3);
 \draw[blue]  (5.3, 0.3)--(0,0);
 \draw[blue, dashed] (0,0)--(-5.3, -0.3);
 \draw[blue] (-6.5, 0.8)--(0,0);
 \draw[blue, dashed] (0,0)--(6.5, -0.8);
 \draw[blue] (5.3-1.2, 0.3+1.1)--(6.5+1.2, -0.8-1.1);
  \draw[blue](-6.5-1.2, 0.8+1.1)--(-5.3+1.2, -0.3-1.1);
  \draw[blue ](-6.5-1.2, 0.8+1.1)-- (5.3-1.2, 0.3+1.1);
  \draw[blue] (6.5+1.2, -0.8-1.1)--(0,-1.9+7.7/23.8);
  \draw[blue,dashed ](0,-1.9+7.7/23.8)--(-2,-1.9+9.7/23.8);
  \draw[blue] (-2,-1.9+9.7/23.8)--(-5.3+1.2, -0.3-1.1);
  \draw[blue] (3,-1.2) node {$E^\perp$}; 
  \draw[green, thick] (6, -0.3)--(-6,0.3) node [left]{$\proj_{E^\perp} b_{\ell_0}$};
  \draw[dashed] (-6,0)--(-6,0.3);
  \draw[->, red, thick] (1,1)--(0.5,0.5);
  \draw[->,red, dashed, thick] (-1,-1)--(-0.5,-0.5);
  \draw[->, red, thick] (2.6,2.6)--(1.5,1.5);
    \draw[->, red,dashed,  thick] (-2,-2)--(-1.5,-1.5);
    \draw[ red,thick] (-2.6,-2.6)--(-2,-2);
    \draw[->, dashed, red, thick]   (5.3*0.8, 0.3*0.8)-- (6*0.8, -0.3*0.8);
    \draw[->, dashed, red, thick] (6.5*0.8, -0.8*0.8)--(6*0.8,-0.3*0.8); 
     \draw[->, red, dashed, thick]   (-5.3*0.8, -0.3*0.8)-- (-6*0.8, 0.3*0.8);
    \draw[->,  red, dashed,  thick] (-6.5*0.8, 0.8*0.8)--(-6*0.8,0.3*0.8); 
    \draw[->, red, thick] (0.7*0.8, -0.6*0.8)--(0,0);
    \draw[->,red, thick] (-0.7*0.8,0.6*0.8)--(0,0);
    \draw[->, red, thick] (-0.7*2.5,0.6*2.5)--(-0.7, 0.6); 
    \draw[->, red, thick] (0.7*2.5,-0.6*2.5)--(0.7, -0.6); 
     \draw[->, red, thick] (-0.7*2.5-6*0.8,0.6*2.5+0.3*0.8)--(-0.7-6*0.8, 0.6+0.3*0.8); 
   \draw[->, red, thick] (0.7*2.5-6*0.8,-0.6*2.5+0.3*0.8)--(0.7-6*0.8, -0.6+0.3*0.8); 
     \draw[->, red, thick] (-0.7*2.5+4.8,0.6*2.5-0.24)--(-0.7+4.8, 0.6-0.24); 
    \draw[->, red, thick] (0.7*2.5+4.8,-0.6*2.5-0.24)--(0.7+4.8, -0.6-0.24); 
    \end{tikzpicture}
\caption{The red vector field determines a 1-parameter subgroup $\phi_t$ in $GL(\bR^N)$, compatible with stabilization on $N$, that deformation retracts $B_K(b_{\ell_0},b_{\ell_0}^\perp)\cup E^\perp$ onto $E^\perp$. This induces a homotopy equivalence between $\Gr(\dim E;\rel\ G_{K,\epsilon,R,  b_{\ell_0}})$ and the open locus $\Hom(E, E^\perp)\subset \Gr(\dim E;\infty)$. }\label{figure: ic}
\end{figure}

 Now we show that condition ic) holds for $(\cU, Q_\flat, G_{K, \epsilon,R, b_{\ell_0}}^\bullet, M_{K, \epsilon, R,b_{\ell_0}}^{\bullet, \dagg})$, provided that $\cU$ has been chosen very small. Let $B_K(b_{\ell_0}, b_{\ell_0}^\perp)$ be the space of vectors $v$ satisfying $|\proj_{b_{\ell_0}^\perp} v|\leq K|\proj_{b_{\ell_0}}v|$, i.e. it is the union of the vector subspaces in $G_{K,\epsilon, R, b_{\ell_0}}$.  Given any finite dimensional vector subspace $E$ very close to $b_{\ell_0}^\perp$, at any finite level (i.e. $\dim b_{\ell_0}<\infty$), we have the following natural diffeomorphisms
 \begin{align*}
 &B_K(b_{\ell_0},b_{\ell_0}^\perp)\cong \Cone(S^{\dim b_{\ell_0}-1}\times D_K^{\dim b_{\ell_0}^\perp})\\
 &E^\perp\cap B_K(b_{\ell_0},b_{\ell_0}^\perp)\cong \Cone(S^{\dim b_{\ell_0}-1}\times D_{K'}^{\dim E^\perp-\dim b_{\ell_0}}),
 \end{align*}
 where one can identify $D_{K'}^{\dim E^\perp-\dim b_{\ell_0}}$ as the intersection of an affine subspace in $b_{\ell_0}^\perp$ with $D_K^{\dim b_{\ell_0}^\perp}$ (see the illustration in Figure \ref{figure: ic}). Now to show that ic) holds, we just need to show that there is a homotopy equivalence between the space of vector subspaces of the same dimension as $E$ that intersect every element in $G_{K,\epsilon, R, b_{\ell_0}}$ trivially, denoted by $\Gr(\dim E;\rel\ G_{K,\epsilon,R,  b_{\ell_0}})$, and the open locus $\Hom(E, E^\perp)\subset \Gr(\dim E;\infty)$. This is obvious by defining a contracting vector field on $E\oplus (\proj_{E^\perp} b_{\ell_0})^{\perp E^\perp}$ that generates a one-parameter subgroup of $GL(\bR^N)$.

Therefore, for $\cU$ sufficiently small, we have $(\cU, Q_\flat, G_{K, \epsilon, R, b_{\ell_0}}^\bullet, M_{K,\epsilon, R, b_{\ell_0}}^{\bullet, \dagg})$ an object in $\QHam(U/O)$.     
\end{proof}

The above lemma shows that $\QHam(U/O)$ is nonempty and its projection to $\Open(U/O)^{op}$ form a basis for  the topology of $U/O$.

\begin{lemma}\label{lemma: negative Q_flat dominates}
For any $\ell_0\in U/O$ and any object $(\cU,Q_\flat, G^\bullet, M^{\bullet, \dagg})\in \QHam(U/O)$ with $\ell_0\in \cU$, there exists an object $(\cU_1, Q_\flat^{(1)}, G_1^\bullet, M_1^{\bullet, \dagg})$ and a morphism $\fj_{Q_{12}}$ from the former to the latter,  such that $\ell_0\in \cU_1$ and $\varphi_{Q_\flat^{(1)}}(\FT(\ell_0))$ has no negative spectral part. 
\end{lemma}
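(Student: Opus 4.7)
The plan is to construct $Q_\flat^{(1)}$, $Q_{12}$, and the target object explicitly, then verify the axioms of $\QHam(U/O)$. Write $k := \dim\Spectr^-(\varphi_{Q_\flat}(\FT(\ell_0)))$. If $k=0$, take the identity morphism on any neighborhood $\cU_1\ni \ell_0$ contained in $\cU$. So assume $k>0$.

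Let $V^- := \underline{\Spectr}^-(\varphi_{Q_\flat}(\FT(\ell_0)))$, a $k$-dimensional subspace. Choose a nonnegative quadratic form $Q_+$ (in the $\bp$-variables, identified with the $\bq$-variables via the Euclidean metric) supported on $V^-$, whose eigenvalues are large enough to push each negative eigenvalue of $\varphi_{Q_\flat}(\FT(\ell_0))$ past $\infty$ on the spectrum circle into the positive arc---the one-dimensional computation $-b \mapsto -b/(1-2bc)$ under $\varphi_{c\bp^2}$ crosses $\infty$ at $c=1/(2b)$ and becomes positive for $c>1/(2b)$. Set $Q_\flat^{(1)} := Q_\flat + Q_+$; this remains negatively stable since $Q_+$ has finite rank, and by construction $\varphi_{Q_\flat^{(1)}}(\FT(\ell_0))$ has no negative spectral part. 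The rank equality in id) applied to the target then forces $\rank(Q_{12})=k$, so take $Q_{12}$ to be a nonnegative rank-$k$ form supported on the image of $V^-$ inside $\underline{\Spectr}^+(\varphi_{Q_\flat^{(1)}}(\FT(\ell_0)))$, with values large enough to flip those $k$ positive eigenvalues back past $\infty$ into negative. Then $\varphi_{Q_\flat^{(1)}+Q_{12}}(\FT(\ell_0))$ has exactly $k$ negative eigenvalues, $\widetilde{Q}=Q_++Q_{12}\geq 0$, and the no-rank-contribution identity of iib) holds at $\ell_0$. Continuity extends both this identity and the graph condition ia) to some small open neighborhood $\cU_1\ni\ell_0$ with $\cU_1\subset \cU$.

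The target object $(\cU_1, Q_\flat^{(1)}, G_1^\bullet, M_1^{\bullet,\dagg})$ is then built following the template of Lemma \ref{lemma: QHam objects}: take $G_1 = G_{K_1,\epsilon_1, R_1, b_{\ell_0}^{(1)}}$ for suitable parameters, with $b_{\ell_0}^{(1)}$ adapted to the spectral decomposition of $\FT(\varphi_{Q_\flat^{(1)}}(\FT(\ell_0)))$, and let $M_1^{\bullet,\dagg}$ be the free module generated by $Q_\flat^{(1)}$. Conditions ia)--id) for the object, together with the structural parts of iia)--iib) for the morphism, are verified by the same arguments as in Lemma \ref{lemma: QHam objects}, possibly after shrinking $\cU_1$.

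The main obstacle is the compatibility condition $G_1\supset G\cup (G+Q_{12})$ required for $\fj_{Q_{12}}$ to exist. The families $G$ and $G+Q_{12}$ are naturally adapted to different distinguished subspaces: the source's $b_{\ell_0}^\perp$ reflects the negative spectrum of $\FT(\varphi_{Q_\flat}(\FT(\ell_0)))$, while the target's $b_{\ell_0}^{(1),\perp}$ reflects that of $\FT(\varphi_{Q_\flat^{(1)}}(\FT(\ell_0)))$, and these have rotated past $\infty$ under $Q_+$. Exhibiting a single tuple $(K_1,\epsilon_1, R_1, b_{\ell_0}^{(1)})$ that accommodates both families while still satisfying the contractibility condition ic) requires a careful quantitative analysis at $\ell_0$ together with a continuity shrinkage of $\cU_1$; beyond this, the verification is a direct adaptation of the argument in Lemma \ref{lemma: QHam objects}.
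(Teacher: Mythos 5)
You set $Q_\flat^{(1)} = Q_\flat + Q_+$ with $Q_+\geq 0$, but this goes in the wrong direction under the conventions the paper uses. By the formula $\Spectr(\FT(\varphi_{Q+Q_\flat}(\FT(\ell)))) = \Spectr(\widetilde{\ell}+2Q)$ established in the proof of Lemma \ref{lemma: QHam objects}, for $Q\geq 0$ any subspace on which $\widetilde{\ell}$ is positive definite remains positive definite for $\widetilde{\ell}+2Q$; translating back through $\FT$ this says
\begin{align*}
\dim\Spectr^-\bigl(\varphi_{Q+Q_\flat}(\FT(\ell))\bigr)\;\geq\;\dim\Spectr^-\bigl(\varphi_{Q_\flat}(\FT(\ell))\bigr)
\end{align*}
for every nonnegative $Q$: adding a nonnegative form to $Q_\flat$ can never \emph{decrease} the negative spectral dimension. (This is exactly what condition id) records when the increase saturates at $\rank(Q_\dagg)$.) Your one-dimensional computation $-b\mapsto -b/(1-2bc)$ uses the opposite Hamiltonian flow sign; under the paper's convention the image is $-b/(1+2bc)$, which stays negative for all $c>0$. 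Consequently $\varphi_{Q_\flat+Q_+}(\FT(\ell_0))$ still has at least $k$ negative eigenvalues, not zero, and your candidate target violates the required property.

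The paper instead \emph{subtracts}: after normalizing so that $\FT(\varphi_{Q_\flat}(\FT(\ell_0)))$ has spectrum $\{\pm 1,\infty\}$ on $V_1\oplus V_{-1}\oplus V_\infty$ (so $V_1=\underline{\Spectr}^-(\varphi_{Q_\flat}(\FT(\ell_0)))$), it takes $Q_{12}\geq 0$ supported on $V_1$ with eigenvalue $1$ and sets $Q_\flat^{(1)}=Q_\flat-Q_{12}$. This rotates the $k$ negative eigenvalues through $\infty$ into the nonnegative arc, and crucially yields $\widetilde{Q}=Q_\flat^{(1)}+Q_{12}-Q_\flat=0$, so the nonnegativity and no-rank-contribution requirements of iib) hold trivially. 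Your scheme leaves $\widetilde{Q}=Q_++Q_{12}$, a large nonnegative form whose no-rank-contribution property over a whole neighborhood would itself need justification. Finally, the containment $G_1\supset G\cup(G+Q_{12})$ that you flag as the remaining obstacle is handled in the paper by a soft cone observation rather than a delicate quantitative analysis: the lower bound $Q_1|_{V_{-1}}>1/2+\epsilon$ forced by id), together with the uniform upper bound on eigenvalues, puts $\Supp(Q_1)$ inside $B_K(V_{-1},V_1\oplus V_\infty)$ for every $Q_1\in G$, hence $G+Q_{12}$ lives in $B_K(V_{-1}\oplus V_1,V_\infty)$, and both cones are absorbed by $G_{K,\epsilon,R,b_{\ell_0}}$ with $b_{\ell_0}=V_1\oplus V_{-1}$. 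This step you did not supply.
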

\begin{proof}
Using congruent equivalences, we may assume that $\FT(\varphi_{Q_\flat}(\FT(\ell_0)))$ has spectral decomposition concentrated in $\pm 1$ and $\infty$. Let $V_{1}$ (resp. $V_{-1}, V_\infty$) be the eigenspace of $1$ (resp. $-1$ and $\infty$) of $\FT(\varphi_{Q_\flat}(\FT(\ell_0)))$. Our first observation about $G$ is that for any $Q_1\in G$, the restriction of $Q_1|_{V_{-1}}$ must have positive eigenvalues bounded below by $1/2+\epsilon$ for some fixed $\epsilon>0$, followed from assumption id). 
Therefore, by the boundedness of the eigenvalues of elements in $G$, we must have the support of all $Q_1\in G$ contained in the region 
\begin{align*}
B_{K}(V_{-1}, V_{1}\oplus V_\infty)=\{v\in \bR^N: |\proj_{V_{1}\oplus V_\infty} v|\leq K |\proj_{V_{-1}} v|\}, 
\end{align*}
for some $K>0$ (shown as the blue cone in Figure \ref{figure: blue cone}). Now take $Q_{12}$ to be the nonnegative quadratic form supported on $V_{1}$ and with eigenvalue $1$, then $\varphi_{Q_\flat-Q_{12}}(\FT(\ell_0))$ is graph like and  has spectral decomposition concentrated at $1$ and $\infty$.

\begin{figure}[h]
\begin{tikzpicture}
\draw[thick] (-3,0)--(3,0) node[below] {$V_{-1}$};
\draw[thick] (0,-2.5)--(0,2.5) node[right] {$V_\infty$};
\draw[thick] (2,2) --(-2,-2) node[below] {$V_{1}$}; 
\draw[cyan] (3, -1.55) arc[x radius =30pt , y radius = 45pt, start angle= -90, end angle= 270];
\draw[cyan] (-3, -1.55) arc[x radius =30pt , y radius = 45pt, start angle= -90, end angle= 270];
\draw[cyan] (2.8, -1.55)--(-2.7, 1.55);
\draw[cyan] (-2.8, -1.55)--(2.7, 1.55);
\draw[red] (2.6, 1.54) arc[x radius=75pt, y radius= 29pt, start angle=-10, end angle=350];
\draw[red] (2.6, -1.84) arc[x radius=75pt, y radius= 29pt, start angle=-10, end angle=350];
\draw[red, dashed] (-1,0.75)--(1,-0.75);
\draw[red, dashed] (1, 2.65)--(-1, -2.65);
\draw[red] (-1,0.75)--(-1, -2.65);
\draw[red] (1,-0.75)--(1, 2.65);
\filldraw[red, opacity=0.2] (-1,0.75)--(0,0)--(-1, -2.65);
\filldraw[red, opacity=0.2] (1,-0.75)--(0,0)--(1, 2.65);
\end{tikzpicture}
\caption{The blue cone is $B_K(V_{-1}, V_{1}\oplus V_\infty)$, and the red cone, with the filled region showing one slice along $V_{1}\oplus V_\infty$, is $B_K(V_{-1}\oplus V_{1}, V_\infty)$.}\label{figure: blue cone}
\end{figure}

Consider the sum $G+Q_{12}=\{Q+Q_{12}: Q\in G\}$. Since the support of $Q_{12}$ intersects $\Supp(Q), Q\in G$ trivially, we have the rank equality holds $\rank(Q+Q_{12})=\rank(Q)+\rank(Q_{12})$ for all $Q\in G$. On the other hand, the forms in $G+Q_{12}$ have eigenvalues bounded above by some $R>0$, and their support are contained in the cone $B_{K}(V_{-1}\oplus V_1, V_\infty)$ (shown as the red cone in Figure \ref{figure: blue cone}). 

By the relation $b_{\ell_0}=V_1\oplus V_{-1}$, choose sufficiently small $\cU_1$ containing $\ell_0$, and let $(\cU_1, Q_\flat^{(1)}, G_1^\bullet, M_1^{\bullet,\dagg})$ be $(\cU_1, Q_\flat-Q_{12}, G^\bullet_{K,\epsilon, R, b_{\ell_0}}, M^{\bullet,\dagg}_{K,\epsilon,, R, b_{\ell_0}})$ as defined in the proof of Lemma \ref{lemma: QHam objects} (more specifically (\ref{eq: G_{K,epsilon, R, b_ell}})), then the morphism
\begin{align*}
\fj_{Q_{12}}: (\cU,Q_\flat, G^\bullet, M^{\bullet, \dagg})\rightarrow (\cU_1, Q_\flat^{(1)}, G_1^\bullet, M_1^{\bullet,\dagg})
\end{align*}
satisfies all the conditions in Definition \ref{def: QHam(U/O)} 
(ii). This completes the proof. 
  
\end{proof}

For any $L\subset T^*\bR^N$, let $\gamma_L: L\rightarrow U/O$ be the stable Gauss map. Let $\QHam_L(U/O)$ be the fibrant simplicial category, whose 
\begin{itemize}
\item[(i)] Objects are $(\cU\overset{\text{open}}{\subset}U/O, \cV\overset{\text{open}}{\subset} L, Q_\flat, G^\bullet, M^{\bullet,\dagg})$ subject to the condition that $\gamma_L(\cV)\subset \cU$ and $(\cU, Q_\flat, G^\bullet, M^{\bullet,\dagg})\in \QHam(U/O)$;

\item[(ii)] Mapping simplicial sets are defined by 
\begin{align}\label{eq: Maps, QHam_L}
&\Maps_{\QHam_L(U/O)}((\cU_1, \cV_1, Q_\flat^{(1)}, G_1^\bullet, M_1^{\bullet,\dagg}),  (\cU_2, \cV_2, Q_\flat^{(2)}, G_2^\bullet, M_2^{\bullet,\dagg}))\\
=&\begin{cases}&\Maps_{\QHam(U/O)}((\cU_1, Q_\flat^{(1)}, G_1^\bullet, M_1^{\bullet,\dagg}),  (\cU_2,Q_\flat^{(2)}, G_2^\bullet, M_2^{\bullet,\dagg})),\ \cV_1\supset \cV_2;\\
&\emptyset,\ \cV_1\not\supset \cV_2. 
\end{cases}
\end{align}
\end{itemize}
We also view $\QHam_L(U/O)$ as an $\infty$-category. 
There is an obvious functor $p_L: \QHam_L(U/O)^{op}\rightarrow \Open(L)$ by the projection to the $\cV$ factor. It is a Cartesian fibration: for any object $(\cU, \cV,Q_\flat, G^\bullet, M^{\bullet,\dagg})$ in $\QHam_L(U/O)^{op}$ and any morphism $\cV'\hookrightarrow \cV$ in $\Open(L)$, the Cartesian morphism over the inclusion is the unique morphism $(\cU, \cV',Q_\flat, G^\bullet, M^{\bullet, \dagg})\rightarrow (\cU, \cV,Q_\flat, G^\bullet, M^{\bullet,\dagg})$.

\subsection{A canonical functor $F_L: \QHam_L(U/O)^{op}\rightarrow \Fun(\Delta^1, \PrstL)$}\label{subsec: F_L}

We will construct a canonical functor $F_L: \QHam_L(U/O)^{op}\rightarrow \Fun(\Delta^1, \PrstL)$ through a set of correspondences. The construction will heavily rely on the main results in \cite[Section 2]{Jin} and  Appendix \ref{sec: Appendix}. We remark that in the following correspondences, the Hamiltonian flow for each quadratic form $Q$ is performed in the opposite direction compared to those in the previous sections, i.e. $\varphi_{Q}\leadsto \varphi_{-Q}$.  In particular, the loops/paths that they induce are the reversed ones.

\subsubsection{The first set of correspondences}\label{subsubsec: first corr}

For any $(\cU, \cV, Q_\flat^{(1)}, G^{\bullet}, M^{\bullet,\dagg})$ in $\QHam_L(U/O)$, let 
\begin{align}\label{eq: cH_M_N}
\cH_{M_N}=&\{(\bq_0,\bq_1,t_0,t_1, Q_\flat+Q_\dagg):  t_1-t_0+(Q_\flat+Q_\dagg)[\bq_0]+\bq_1\cdot \bq_0\geq 0\}\\
\nonumber&\subset \bR^N\times \bR^N\times \bR\times \bR\times M_N. 
\end{align}

Then $\cH_{M_N}$ is naturally endowed with a $VG_N$-module structure:
\begin{itemize}
\item 
\begin{align*}
\cH_{M_N}^{\lng n\rng_\dagg}=&\{((Q_i, \bp_i)_{i\in\lng n\rng^\circ}; (\bq_0,\bq_1,t_0,t_1, Q_\flat+Q_\dagg)): ((Q_i)_{i\in\lng n\rng^\circ}, Q_\flat+Q_\dagg)\in M_N^{\lng n\rng_\dagg}\}\\
&\subset VG_N^{\lng n\rng}\times \cH_{M_N}
\end{align*}

\item For any $f: \lng m\rng_\dagg\rightarrow \lng n\rng_\dagg$ in $N(\Fin_{*,\dagg})$, 
\begin{align*}
\cH_{M_N}^{\lng m\rng_\dagg}\longrightarrow& \cH_{M_N}^{\lng n\rng_\dagg}\\
((Q_i, \bp_i)_{i\in\lng m\rng^\circ}; (\bq_0,\bq_1,t_0,t_1, Q_\flat+Q_\dagg))\mapsto& ((\sum\limits_{i\in f^{-1}(j)}Q_i, \sum\limits_{i\in f^{-1}(j)}\bp_i)_{j\in\lng n\rng^\circ}; (\bq_0,\bq_1+2\sum\limits_{i\in f^{-1}(\dagg)\backslash\{\dagg\}}\bp_i,\\
&t_0,t_1+\sum\limits_{i\in f^{-1}(\dagg)\backslash\{\dagg\}}Q_i[\bp'_i], Q_\flat+Q_\dagg+\sum\limits_{i\in f^{-1}(\dagg)\backslash\{\dagg\}}Q_i)),
\end{align*}
where $\bp_i'$ is any solution solving $Q_i\bp_i'=\bp_i$. It is easy to check that the map is well defined. 

\item The morphism $\cH_{G_N, M_N}^{\bullet, \dagg}\rightarrow VG^\bullet\circ\pi_\dagg$ is the natural one induced from the projection of $\cH_{M_N}^{\lng n\rng_\dagg}$ to $VG_N^{\lng n\rng}$ for each $n$. 
\end{itemize}
It is straightforward to check that $(VG^\bullet, \cH_{M_N}^{\bullet,\dagg})$ satisfies the conditions in \cite[Theorem 2.21]{Jin}, so it is naturally an object in $\cMod^{N(\Fin_*)}(\Slch)$.

For any $\cY=(\cU,\cV,Q_\flat, G,M)\in \QHam_L(U/O)$, let $\cS_{\cY}$ be the space of \emph{stable} nonnegative quadratic form $\widetilde{Q}$ such that $\varphi_{Q_\flat+Q_\dagg+\widetilde{Q}}(\FT(\ell))$ is graph like and satisfies that
\begin{align*}
\rank(\Spectr^-(\varphi_{Q_\flat+Q_\dagg+\widetilde{Q}}(\FT(\ell))))=\rank(\Spectr^-(\varphi_{Q_\flat+Q_\dagg}(\FT(\ell))))
\end{align*}
for all $\ell\in \cU$ and $Q_\flat+Q_\dagg\in M$. It is clear that $\cS_{\cY}$ is contractible, for it admits a deformation retraction to the point $\{\widetilde{Q}=0\}$ through sending $\widetilde{Q}$ to $t\widetilde{Q}, 0\leq t\leq 1$. In the following and later subsections, the contractibility of $\cS_{\cY}$ will play a role in the definition of the canonical functor $F_L$ (\ref{eq: F_L, sketch}). 

For any $m$-simplex $\tau_m$ in $\cS_{\cY}$, which is a family $\widetilde{Q}_v$ parametrized by $v\in |\Delta^m|$, we have the following correspondence of pairs in $\Mod^{N(\Fin_*)}(\Corr(\Slch))$ for any $N\gg 0$
\begin{align}\label{diagram: corr base}
\xymatrix{&(VG_N, \cV\times\cH_{M_N}\times |\Delta^m|)\ar[dr]^{p_0}\ar[dl]_{p_{1, \tau_m}}&\\
(VG_N, \cV\times M_N\times \bR_{\bq_1}^{N}\times \bR_{t_1}\times |\Delta^m|)&&(pt, \cV\times\bR^{N}_{\bq_0}\times \bR_{t_0}), 
}
\end{align}
where $p_0$ is the obvious projection, and $p_{1,\tau_m}$ is the map that does the identity on the factor $VG_N$ and on $\cV\times \cH_{M_N}\times |\Delta^m|$ it sends
\begin{align*}
(x; Q_\flat+Q_\dagg, \bq_0,\bq_1,t_0,t_1; v)\mapsto (x; Q_\flat+Q_\dagg, \bq_1+2\widetilde{Q}_v\bq_0, t_1+\widetilde{Q}_v[\bq_0];v)
\end{align*}

\begin{lemma}\label{eq: corresp cH_M_N}
The correspondence (\ref{diagram: corr base}) canonically determines a morphism from $(pt, \cV\times\bR^{N}_{\bq_0}\times \bR_{t_0})$ to $(VG_N, \cV\times M_N\times \bR_{\bq_1}^{N}\times \bR_{t_1}\times |\Delta^m|)$ in  $\Mod^{N(\Fin_*)}(\bCorr(\Slch)^{\propmap}_{\all,\all})^{\rightlax}$.  
\end{lemma}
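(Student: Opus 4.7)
The plan is to reduce the statement to the general machinery of the Appendix, which produces canonical morphisms in the right-lax correspondence category of modules from concrete $\Fin_{*,\dagg}$-correspondence data. Concretely, I would check that the correspondence (\ref{diagram: corr base}) is the underlying $\lng 1\rng_\dagg$-level of a fully coherent $\Fin_{*,\dagg}$-diagram in $\Slch$, and then invoke the Appendix to obtain the claimed morphism.

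First I would extend $p_0$ and $p_{1,\tau_m}$ to a diagram of $\Fin_{*,\dagg}$-objects. At level $\lng n \rng_\dagg$ set
\begin{align*}
\cH_{M_N}^{\lng n\rng_\dagg}\times |\Delta^m| \xrightarrow{p_{1,\tau_m}^{\lng n\rng_\dagg}} VG_N^{\lng n\rng}\times M_N\times \bR_{\bq_1}^N\times \bR_{t_1}\times |\Delta^m|
\end{align*}
where the left leg forgets $\bq_0,t_0$ while updating $\bq_1\mapsto \bq_1+2\widetilde{Q}_v\bq_0$ and $t_1\mapsto t_1+\widetilde{Q}_v[\bq_0]$, and the right leg $p_0^{\lng n\rng_\dagg}$ is the projection to $\cV\times \bR^N_{\bq_0}\times\bR_{t_0}$ composed with the structure map that collapses all factors except those over $*\in\lng n\rng^\circ_\dagg$ (since the source $(pt,\cV\times\bR^N_{\bq_0}\times\bR_{t_0})$ carries the trivial $\Fin_*$-structure). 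The $VG_N$-module structure on $\cH_{M_N}^{\bullet,\dagg}$ established immediately before the Lemma already ensures that these maps intertwine the $\Fin_{*,\dagg}$ transition maps, upgrading (\ref{diagram: corr base}) into a diagram in $\cMod^{N(\Fin_*)}(\bCorr(\Slch))$.

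Next I would verify the properness condition required by the superscript $\propmap$. The only map whose properness is not tautological is $p_{1,\tau_m}^{\lng n\rng_\dagg}$; its fiber over a point $(x,Q_\flat+Q_\dagg,\bq_1',t_1',v)$ is parametrized by $(\bq_0,t_0)$ satisfying $\bq_1'-2\widetilde{Q}_v\bq_0=\bq_1$ for some $\bq_1$ with $t_1-t_0+(Q_\flat+Q_\dagg)[\bq_0]+\bq_1\cdot\bq_0\geq 0$. After fixing $\bq_0$ the remaining half-line in $t_0$ is proper as a subset of $\bR_{t_0}$ after one-point compactification of fibers, and the relevant properness is really the property that the restriction of $p_{1,\tau_m}^{\lng n\rng_\dagg}$ to the stratum cut out by the inequality is a closed map with compact fibers modulo the $\bq_0$-direction, which follows because $\widetilde{Q}_v$ is nonnegative and moves the fiber inside a closed half-space. (If needed, one can use the family of deformation retractions $\widetilde{Q}_v\mapsto t\widetilde{Q}_v,\ 0\leq t\leq 1$ afforded by the contractibility of $\cS_\cY$ to reduce to the case $\widetilde{Q}_v=0$ where properness is clear.)

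Finally I would apply Theorem/Proposition from Appendix \ref{sec: Appendix}, specifically the construction of the canonical functor from a correspondence category of $\Fin_{*,\dagg}$-objects to $\Mod^{N(\Fin_*)}(\bCorr(\cC))^{\rightlax}$, with $\cC=\Slch$, to the $\Fin_{*,\dagg}$-diagram assembled above; this directly outputs the desired morphism from $(pt,\cV\times\bR^N_{\bq_0}\times\bR_{t_0})$ to $(VG_N,\cV\times M_N\times\bR^N_{\bq_1}\times\bR_{t_1}\times |\Delta^m|)$ in $\Mod^{N(\Fin_*)}(\bCorr(\Slch)^{\propmap}_{\all,\all})^{\rightlax}$. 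The right-lax (rather than strict) nature of the morphism is forced by the fact that the source algebra is $pt$ while the target is $VG_N$, so the unit of $VG_N$ compares to the identity only up to a structural 2-cell, which is precisely the datum the Appendix packages. The main obstacle I expect is the bookkeeping in Step 1, namely coherently matching the $\Fin_{*,\dagg}$ structure maps on $\cH_{M_N}^{\bullet,\dagg}$ (which involve the correction terms in $\bq_1$ and $t_1$ coming from $\sum_{i\in f^{-1}(\dagg)\setminus\{\dagg\}}Q_i$) with the shift used to define $p_{1,\tau_m}$; this is where one must invoke the well-definedness argument given just above Lemma \ref{eq: corresp cH_M_N}.
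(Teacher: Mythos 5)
There is a genuine gap: you check properness of the wrong map. In $\bCorr(\Slch)^{\propmap}_{\all,\all}$ both legs of a correspondence live in the class $\all$, so nothing whatsoever needs to be proper about $p_{1,\tau_m}^{\lng n\rng_\dagg}$; and indeed, your own fiber analysis shows that map is manifestly \emph{not} proper (the fiber is a noncompact region in $\bR^N_{\bq_0}\times\bR_{t_0}$ cut out by the inequality in (\ref{eq: cH_M_N})), and the phrase ``compact fibers modulo the $\bq_0$-direction'' is not a coherent properness criterion. The $\propmap$ superscript constrains the 2-morphisms, and these are supplied by the right-lax structure on the $\Fin_{*,\dagg}$-diagram. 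Concretely, what \cite[Proposition 2.22]{Jin} reduces the Lemma to are two checks: (i) that the square (\ref{diagram: p_1,tau_m}) relating $\cH_{M_N}^{\lng k\rng_\dagg}$ to its inert pieces is Cartesian (strictness of the module structure over inert maps), and (ii) that the \emph{module structure map} $\pi_k\colon \cH_{M_N}^{\lng k\rng_\dagg}\to\cH_{M_N}$ associated to the unique active map $\lng k\rng_\dagg\to\lng 0\rng_\dagg$ is proper — this is the 2-cell required to lie in $\propmap$. The paper's argument then shows compactness of $\pi_k^{-1}(\bq_0,\widetilde{\bq}_1,t_0,\widetilde{t}_1,Q_\flat+\hat{Q}_\dagg)$ via the bound $\sum_j Q_j[\bp_j'+\bq_0]\leq\widetilde{\mu}$, which is an entirely different computation from the one you perform.

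A secondary issue: the suggestion to use the contraction $\widetilde{Q}_v\mapsto t\widetilde{Q}_v$ to ``reduce to $\widetilde{Q}_v=0$'' is a category error here. Varying $\tau_m$ changes the correspondence, hence the morphism being constructed; contractibility of $\cS_\cY$ is used elsewhere (to assemble the family of such morphisms into the functor $F_L$), not to establish properness of a single correspondence. You also invoke Theorem \ref{thm: appendix mod} without noting that its output lands only in $\cMod^{N(\Fin_*)}(\bCorr(\Slch))^{\rightlax}$ with no $\propmap$ constraint, so a separate verification that the 2-cells are proper is unavoidable — and that verification is exactly the content of the paper's proof.
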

\begin{proof}
Since everything is constant on $\cV$, we will omit it in the bulk of the proof. According to \cite[Proposition 2.22]{Jin}, we just need to check that diagram (\ref{diagram: p_1,tau_m}) is Cartesian, and the map (\ref{eq: cH_{M_N}, prop}) for the unique active map $\lng m\rng_\dagg\rightarrow\lng 0\rng_\dagg$ is proper (making the 2-morphism lying in $\propmap$).
\begin{align}\label{diagram: p_1,tau_m}
\xymatrix{
\cH_{M_N}^{\lng k\rng_\dagg}\times |\Delta^m|\ar[r]\ar[d]_{p_{1,\tau_m}}&\prod\limits_{j\in \lng k\rng^\circ} VG^{\{j,*\}}\times \cH_{M_N}^{\{\dagg,*\}}\times |\Delta^m|\ar[d]^{(id, p_{1,\tau_m})}\\
M_N^{\lng k\rng_\dagg}\times \bR_{\bq_1}^N\times \bR_{t_1}\times |\Delta^m|\ar[r]&\prod\limits_{j\in\lng k\rng^\circ} VG^{\{j,*\}}\times M_N^{\{\dagg,*\}}\times  \bR_{\bq_1}^N\times \bR_{t_1}\times |\Delta^m|
}
\end{align}

\begin{align}\label{eq: cH_{M_N}, prop}
\pi_k: \cH_{M_N}^{\lng k\rng_\dagg}&\longrightarrow \cH_{M_N}\\
\nonumber((Q_j,\bp_j)_{j\in\lng k\rng^\circ}; (\bq_0,\bq_1,t_0,t_1,Q_\flat+Q_\dagg)&\mapsto (\bq_0, \bq_1+2\sum\limits_{j\in \lng k\rng^\circ}\bp_j, t_0, t_1+\sum\limits_{j\in \lng k\rng^\circ}Q_j[\bp_j'], Q_\flat+Q_\dagg+\sum\limits_{j\in \lng k\rng^\circ}Q_j)
\end{align}
The first one is obvious. Regarding the second one,
for any $(\bq_0, \widetilde{\bq}_1,t_0,\widetilde{t}_1, Q_\flat+\hat{Q}_\dagg)\in \cH_{M_N}$, which by assumption satisfies 
\begin{align*}
\widetilde{\mu}:=\widetilde{t}_1-t_0+(Q_\flat+\hat{Q}_\dagg)[\bq_0]+\widetilde{\bq}_1\cdot\bq_0\geq 0
\end{align*}
the fiber $\pi_k^{-1}(\bq_0, \widetilde{\bq}_1,t_0,\widetilde{t}_1, Q_\flat+\hat{Q}_\dagg)$ admits a proper map to the space of splittings of $\hat{Q}_\dagg$ into $k+1$ ones in $G_N^{\lng k\rng\cup \{\dagg\}}$ with fiber at $((Q_j)_{j\in \lng k\rng^\circ}, Q_\dagg)$ identified with the space of $(\bp_j)_{j\in\lng k\rng^\circ}$ satisfying
\begin{align*}
\sum\limits_{j\in\lng k\rng^\circ} Q_j[\bp_j']+2\sum\limits_{j\in\lng k\rng^\circ}\bp_j\cdot \bq_0+\sum\limits_{j\in\lng k\rng^\circ}Q_j[\bq_0]=\sum\limits_{j\in\lng k\rng^\circ} Q_j[\bp_j'+\bq_0]\leq \widetilde{\mu},
\end{align*}
hence $\pi_k^{-1}(\bq_0, \widetilde{\bq}_1,t_0,\widetilde{t}_1, Q_\flat+\hat{Q}_\dagg)$ is compact. On the other hand $\pi_k$ is clearly closed, so it is proper as desired. 
\end{proof}

\subsubsection{The second set of correspondences}\label{subsubsec: second corr}
Again, for any $(\cU, \cV, Q_\flat^{(1)}, G^{\bullet}, M^{\bullet,\dagg})$ in $\QHam_L(U/O)$, we consider the following correspondence(s) for localizing sheaves on $\cV\times M_N\times \bR^N_{\bq_1}\times \bR_{t_1}$ along the full subcategory $\Shv^{\geq 0}(\cV\times M_N\times \bR^N_{\bq_1}\times \bR_{t_1})$: 
\begin{align}\label{eq: projector, cV}
\xymatrix@=0.5em{&(VG_N, \cV\times M_N\times \bR_{\bq_1}^N\times\bR_{t_1}\times (-\infty,0])\ar[dl]^{a_{(-\infty, 0]}}\ar[dr]_{p_{(-\infty,0]}}&\\
(VG_N, \cV\times M_N\times \bR_{\bq_1}^N\times\bR_{t_1})&&(VG_N, \cV\times M_N\times \bR_{\bq_1}^N\times\bR_{t_1})
},
\end{align}  
where $p_{(-\infty,0]}$ is the obvious projection and $a_{(-\infty,0]}$ is the addition map on the last two factors $\bR_{t_1}\times (-\infty,0]$. The functor $(a_{(-\infty,0]})_!p_{(-\infty,0]}^*$ on $\Shv(\cV\times M_N\times \bR_{\bq_1}\times\bR_{t_1};\bk)$ is the convolution with $\bk_{(-\infty,0]}$, and will be denoted as $*\bk_{(-\infty,0]}$. Similarly, we have 
\begin{align}\label{eq: projector, cV2}
\xymatrix@=0.5em{&(VG_N, \cV\times M_N\times \bR_{\bq_1}^N\times\bR_{t_1}\times [0,\infty))\ar[dl]^{a_{[0,\infty)}}\ar[dr]_{p_{[0,\infty)}}&\\
(VG_N, \cV\times M_N\times \bR_{\bq_1}^N\times\bR_{t_1})&&(VG_N, \cV\times M_N\times \bR_{\bq_1}^N\times\bR_{t_1})
},
\end{align}  
and we denote the resulting convolution functor $(a_{[0,\infty)})_*p_{[0,\infty)}^!$ by $\overset{!}{*}\omega_{[0,\infty)}$. 

We have \cite[Proposition 2.1 and 2.2]{Tamarkin1} and their analogue. 
\begin{thm}[\cite{Tamarkin1}]\label{thm: variant Tamarkin}
For any smooth manifold $X$, the convolution functor 
\begin{align*}
*\bk_{(-\infty,0]}: \Shv(X\times\bR_t; \bk)\longrightarrow \Shv(X\times\bR_t; \bk)
\end{align*}
(resp. 
\begin{align*}
\overset{!}{*}\omega_{[0,\infty)}: \Shv(X\times\bR_t; \bk)\longrightarrow \Shv(X\times\bR_t; \bk)
\end{align*}
)

gives a projector 
\begin{align*}
\Shv(X\times\bR_t; \bk)\longrightarrow \Shv(X\times\bR_t; \bk)/\Shv^{\geq 0}(X\times\bR_t; \bk):=\Shv^{<0}(X\times\bR_t;\bk),
\end{align*}
with the right-hand-side identified with the left (resp. right) orthogonal complement of $\Shv^{\geq 0}(X\times\bR_t; \bk)$. 
\end{thm}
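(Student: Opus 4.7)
The plan is to proceed in three steps: establish idempotency of the convolution functor, show it annihilates $\Shv^{\geq 0}$, and identify its essential image with the left orthogonal complement of $\Shv^{\geq 0}$.

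\textbf{Step 1 (Idempotency).} I would first show $\bk_{(-\infty,0]} * \bk_{(-\infty,0]} \simeq \bk_{(-\infty,0]}$ by direct computation. Writing convolution as $a_!(F \boxtimes G)$ for the addition map $a: \bR^2 \to \bR$, the stalk at $t \in \bR$ is the compactly supported cohomology of the fiber $\{(t_1,t_2) : t_1, t_2 \leq 0,\ t_1+t_2 = t\}$, which is empty for $t > 0$ and a closed interval for $t \leq 0$. The resulting sheaf is $\bk_{(-\infty,0]}$, and the idempotent multiplication comes from the canonical commutative algebra structure on $\bk_{(-\infty,0]}$ corresponding to addition on $(-\infty,0]$. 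This promotes $-*\bk_{(-\infty,0]}$ to an idempotent endofunctor of $\Shv(X\times\bR_t;\bk)$.

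\textbf{Step 2 (Kernel contains $\Shv^{\geq 0}$).} The open-closed decomposition of $\bR$ yields a fiber sequence $\bk_{(0,\infty)} \to \bk_\bR \to \bk_{(-\infty,0]}$. Convolving with $\cF \in \Shv^{\geq 0}$ gives a triangle
\[
\cF * \bk_{(0,\infty)} \to \cF * \bk_\bR \to \cF * \bk_{(-\infty,0]},
\]
and it suffices to show the first map is an equivalence. This is the heart of Tamarkin's argument: when $\SS(\cF) \cap T^{*,<0} = \emptyset$, propagation of sections of $\cF$ in the positive $t$-direction is unobstructed, so the Kashiwara--Schapira non-characteristic deformation lemma applied to the family of half-lines $[-s,\infty)$ as $s \to \infty$ gives an equivalence between integration over $(0,\infty)$ versus all of $\bR$ in the $t$-fiber variable. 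Hence $\cF * \bk_{(-\infty,0]} \simeq 0$.

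\textbf{Step 3 (Image equals left orthogonal).} By Step 1, the unit natural transformation $\id \to -*\bk_{(-\infty,0]}$ has fiber equivalent to convolution with $\bk_{(0,\infty)}[-1]$, and by the Kashiwara--Schapira estimate $\SS(F*G) \subset \SS(F) + \SS(G)$, this fiber lands in $\Shv^{\geq 0}$. Thus for any $\cG \in \Shv^{\geq 0}$ and any $\cF$,
\[
\Hom(\cG,\ \cF * \bk_{(-\infty,0]}) \simeq \Hom(\cG * \bk_{(-\infty,0]},\ \cF * \bk_{(-\infty,0]}) = 0
\]
by Step 2, showing the essential image lies in the left orthogonal $^\perp\Shv^{\geq 0}$. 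Conversely, if $\cH \in {}^\perp\Shv^{\geq 0}$, then $\Hom(\text{fib}(\cH \to \cH * \bk_{(-\infty,0]}),\cH) = 0$, which together with idempotency forces $\cH \simeq \cH * \bk_{(-\infty,0]}$. This is the standard Bousfield-localization argument identifying the quotient $\Shv/\Shv^{\geq 0}$ with the image of the idempotent. The variant with $\overset{!}{*}\omega_{[0,\infty)}$ is proved symmetrically using the Verdier-dual triangle, producing the right orthogonal complement.

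The main obstacle will be Step 2, namely carrying out the non-characteristic deformation argument for sheaves valued in spectra rather than in ordinary derived categories. The classical proof proceeds via truncations and spectral sequences which do not translate verbatim to the stable $\infty$-categorical setting; however, as noted in Subsection 3.1, the functorial six-functor estimates on singular support extend to the spectra setting without essential change, so the geometric input can be imported directly and the convolution-theoretic manipulations above go through as written.
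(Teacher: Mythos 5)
The paper does not actually prove this statement; it quotes Tamarkin \cite[Propositions 2.1 and 2.2]{Tamarkin1} and asserts the $\overset{!}{*}\omega_{[0,\infty)}$ version as its obvious analogue, so there is no in-paper proof to compare against. Your outline (idempotency, annihilation of $\Shv^{\geq 0}$, identification of the image) follows Tamarkin's strategy, but Steps 2 and 3 rest on one misconception that produces several downstream errors.

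The misconception is treating $\bk_{\bR}$ as the convolution unit, so that the open-closed triangle $\bk_{(0,\infty)}\to\bk_{\bR}\to\bk_{(-\infty,0]}$ convolved with $\cF$ would give a ``unit'' $\id\to(-)*\bk_{(-\infty,0]}$ with fiber controlled by $\bk_{(0,\infty)}$. The unit for this convolution is $\bk_{\{0\}}$, not $\bk_{\bR}$: the stalk of $\cF*\bk_{\bR}$ at $(x,t)$ is $R\Gamma_c(\{x\}\times\bR;\cF)$, which has nothing to do with $\cF_{(x,t)}$. Moreover there is \emph{no} nonzero natural transformation $\id\to(-)*\bk_{(-\infty,0]}$ at all, since $\Hom(\bk_{\{0\}},\bk_{(-\infty,0]})\simeq i^{!}\bk_{(-\infty,0]}\simeq 0$ for $i\colon\{0\}\hookrightarrow\bR$; only the co-unit $(-)*\bk_{(-\infty,0]}\to\id$ exists, obtained by convolving the closed-open decomposition of $(-\infty,0]$ itself,
\begin{align*}
\bk_{(-\infty,0)}\longrightarrow\bk_{(-\infty,0]}\longrightarrow\bk_{\{0\}},
\end{align*}
so the relevant cofiber is $\cF*\bk_{(-\infty,0)}[1]$, not $\cF*\bk_{(0,\infty)}[\pm 1]$.

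This breaks Step 3 in two independent ways. First, the singular-support estimate must be applied to $\bk_{(-\infty,0)}$, whose $\SS$ lies in $\{\tau\geq 0\}$; the sheaf $\bk_{(0,\infty)}$ you used has $\SS\subset\{\tau\leq 0\}$, so the claim that its convolution ``lands in $\Shv^{\geq 0}$'' is simply false (e.g.\ $\bk_{X\times\{0\}}*\bk_{(0,\infty)}\simeq\bk_{X\times(0,\infty)}$, whose $\SS$ meets $T^{*,<0}$). Second, the orthogonality is verified in the wrong order: by definition ${}^{\perp}\Shv^{\geq 0}=\{X:\Hom(X,\cA)=0\ \forall\cA\in\Shv^{\geq 0}\}$, so one must show $\Hom(\cF*\bk_{(-\infty,0]},\cG)=0$, not $\Hom(\cG,\cF*\bk_{(-\infty,0]})=0$ as you wrote; and the intermediate equivalence $\Hom(\cG,P\cF)\simeq\Hom(P\cG,P\cF)$ that you invoke presupposes the nonexistent unit. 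What Tamarkin (and Kashiwara--Schapira's microlocal cut-off lemma) actually does is combine the co-unit triangle above, the $\SS$ estimate for $\bk_{(-\infty,0)}$, and the vanishing of your Step 2 (which is the genuine microlocal input, though note your stalkwise reduction to $R\Gamma_c([t,\infty);\cF|_{\{x\}\times\bR})$ glosses over the fact that the $\SS$ hypothesis is on $\cF$, not on its fiberwise restrictions) together with the adjunction $a_!\dashv a^{!}$ to get $\Hom(\cF*\bk_{(-\infty,0]},\cG)\simeq\Hom(\cF,P^{R}\cG)=0$. The closing ``Bousfield localization'' sentence is also not an argument as written; once the co-unit is set up correctly, the converse direction is immediate from the co-unit triangle and the fact that its cofiber lies in $\Shv^{\geq 0}$.
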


The pair $(VG_N, \cV\times M_N\times\bR_{\bq_1}^N\times\bR_{t_1})$ determines an object $(\Shv(VG_N;\Sp), \Shv( \cV\times M_N\times\bR_{\bq_1}^N\times\bR_{t_1});\Sp)$ in $\Mod^{N(\Fin_*)}(\PrstR)$ through the symmetric monoidal functor $\ShvSp_*^!$. 

\begin{lemma}\label{lemma: Loc, left orthog}
The convolution action of $\Shv(VG_N;\Sp)$ on $\Shv( \cV\times M_N\times\bR_{\bq_1}^N\times\bR_{t_1});\Sp)$ preserves the full subcategory $\Shv^{\geq 0}( \cV\times M_N\times\bR_{\bq_1}^N\times\bR_{t_1});\Sp)$. In particular, passing to the left orthogonal complement, we have a well defined object $(\Shv(VG_N;\Sp), \Shv^{<0}( \cV\times M_N\times\bR_{\bq_1}^N\times\bR_{t_1});\Sp))$ in $\Mod^{N(\Fin_*)}(\PrstR)$. 
\end{lemma}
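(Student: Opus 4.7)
The plan is to use the explicit correspondence realizing the $\Shv(VG_N;\Sp)$-action together with a direct singular support analysis. By construction, the $VG_N$-module structure on $\cV\times M_N\times \bR^N_{\bq_1}\times \bR_{t_1}$ coming from Lemma \ref{eq: corresp cH_M_N} is transported by $\ShvSp_*^!$ to the module structure in the statement, and the convolution action of $\cG \in \Shv(VG_N;\Sp)$ on $\cF$ is computed as $q_*p^!(\cG \boxtimes \cF)$ along the correspondence arising from the active map $\lng 1\rng_\dagg \to \lng 0\rng_\dagg$ in $N(\Fin_{*,\dagg})$. Explicitly, the relevant leg $q$ sends
\[
((Q_1,\bp_1);\bq_0,\bq_1,t_0,t_1,Q_\flat+Q_\dagg) \longmapsto (\bq_0,\ \bq_1+2\bp_1,\ t_0,\ t_1+Q_1[\bp_1'],\ Q_\flat+Q_\dagg+Q_1),
\]
with $Q_1\bp_1' = \bp_1$.

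The crucial observation is that $Q_1[\bp_1'] = (\bp_1')^{T} Q_1\, \bp_1' \geq 0$ by nonnegativity of $Q_1$; thus $q$ acts by a \emph{positive} translation in $t_1$, and its cotangent differential preserves the $dt_1$-coefficient of every covector (since $\partial q_{t_1}/\partial t_1 = 1$ and no other source coordinate maps to $t_1$). Combined with the Kashiwara--Schapira singular support estimates (\cite[Section 5.4]{Kashiwara}), one deduces: (i) pullback along the projection $p$ does not change the $dt_1$-component of the singular support; (ii) proper pushforward along $q$---properness is supplied by Lemma \ref{eq: corresp cH_M_N}---preserves the sign of the $dt_1$-component. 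Hence for $\cF \in \Shv^{\geq 0}$, i.e. $\mathrm{SS}(\cF) \subset \{\tau \geq 0\}$ with $\tau$ the $dt_1$-coefficient, we have $\mathrm{SS}(\cG * \cF) \subset \{\tau \geq 0\}$, giving $\cG * \cF \in \Shv^{\geq 0}$. The \emph{in particular} assertion then follows by a standard principle: an action preserving a colimit-closed full subcategory descends coherently to the quotient by that subcategory (equivalently, to its left orthogonal complement) in $\PrstR$, and this descent respects the full $\Fin_*$-module structure because the preservation holds at every component of the simplicial diagram.

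The main obstacle I anticipate is the bookkeeping for the correspondence-to-functor translation through the auxiliary factors ($VG_N$, $M_N$, $\cV$), so that the microlocal estimate can be cleanly isolated to the $(\bq_1,t_1)$-direction; one has to verify that none of the other components of the correspondence contribute covectors with negative $dt_1$-coefficient after the sequence of pushforwards and pullbacks. Once this is set up, the substantive geometric input reduces entirely to the single inequality $Q_1[\bp_1'] \geq 0$.
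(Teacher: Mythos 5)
The paper's proof is not a direct singular-support estimate. It uses Theorem \ref{thm: variant Tamarkin} to characterize $\Shv^{\geq 0}$ as the kernel of the projector $\overset{!}{*}\omega_{[0,\infty)}$, so that $\cH\in\Shv^{\geq 0}$ iff $\cH\overset{!}{*}\omega_{[0,\infty)}\simeq 0$, and then applies the base change formula (functoriality of $\ShvSp_*^!$ under composition of correspondences) to commute this convolution with the module action; this works because the action correspondence is equivariant for $\bR_{t_1}$-translations. Thus $\cF\in\Shv^{\geq 0}$ forces $(a_*\frj^!(\cG\boxtimes\cF))\overset{!}{*}\omega_{[0,\infty)}\simeq 0$. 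No microlocal estimate, no positivity of $Q_1[\bp_1']$, and no properness of $a$ are invoked.

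Your route has a genuine gap: the vertical leg $a$ of the action correspondence on $\cV\times M_N\times\bR^N_{\bq_1}\times\bR_{t_1}$ is \emph{not} proper. Its fiber over a fixed point contains the unbounded factor $\bp_1\in\Supp(Q_1)\cong\bR^{\rank Q_1}$. The properness you invoke from Lemma \ref{eq: corresp cH_M_N} concerns the different map $\pi_k\colon \cH_{M_N}^{\lng k\rng_\dagg}\to\cH_{M_N}$, whose target carries the inequality $t_1-t_0+(Q_\flat+Q_\dagg)[\bq_0]+\bq_1\cdot\bq_0\geq 0$ that bounds $\bp_1$; that constraint is absent in the present setting. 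Without properness of $a$ the Kashiwara--Schapira pushforward estimate does not apply directly, so the ``bookkeeping'' you anticipate as the main obstacle is a real one, which the paper sidesteps entirely by passing to the convolution argument. A smaller point: the emphasis on $Q_1[\bp_1']\geq 0$ is misplaced even granting the estimate. What your sentence correctly uses is the $\bR_{t_1}$-equivariance $\partial a_{t_1}/\partial t_1 = 1$, which preserves the sign of the $dt_1$-coefficient regardless of the sign of the shift (and in the module structure (\ref{eq: module G_univ R M}) the $\widetilde{t}_1$-shift in fact appears with a minus sign). The positivity $Q_j[\bp_j']\geq 0$ is used by the paper elsewhere---for the properness of $\pi_k$ in Lemma \ref{eq: corresp cH_M_N} and for inequality (\ref{ineq: t_1, Q_1}) in Lemma \ref{lemma: H_V,M_N}---not for the present lemma.
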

\begin{proof}
It suffices to show that under the correspondence
\begin{align*}
\xymatrix{\cV\times (M_N\times \bR^N_{\bq_1}\times\bR_{t_1})_{VG_N}^{\lng 1\rng_\dagg}\ar[r]^{\frj\ \ \ \ \ }\ar[d]_{a}&VG_N\times \cV\times M_N\times\bR^N_{\bq_1}\times\bR_{t_1}\\
\cV\times M_N\times\bR^N_{\bq_1}\times\bR_{t_1}
},
\end{align*}
as in the $VG_N$-module structure of $\cV\times M_N\times\bR^N_{\bq_1}\times\bR_{t_1}$, for any $\cG\in \Shv(VG_N;\Sp)$ and $\cF\in \Shv^{\geq 0}(VG_N\times\cV\times M_N\times \bR_{\bq_1}^N\times\bR_{t_1};\Sp)$, we have 
\begin{align*}
\SS(a_*\frj^!(\cG\boxtimes \cF))\subset T^{*,\geq 0}(\cV\times M_N\times\bR^N_{\bq_1}\times\bR_{t_1}).
\end{align*} 
Now apply the convolution functor $\overset{!}{*}\omega_{[0,\infty)}$ to the above diagram. By the base change formula, we get 
\begin{align*}
(\cG\boxtimes \cF)\overset{!}{*}\omega_{[0,\infty)}\simeq 0\Rightarrow (a_*\frj^!(\cG\boxtimes \cF)) \overset{!}{*}\omega_{[0,\infty)}\simeq 0.
\end{align*}
Hence the lemma follows. 
\end{proof}

Let 
\begin{align}
\label{eq: H_cV,M_N}
\bH_{\cV, M_N}^-=&\{(x, Q_\flat+Q_\dagg, \bq_1,t_1): t_1-\frac{1}{2}(A_{\ell_x}-2Q_\flat-2Q_\dagg)^{\FT}[\bq_1]<0\}\\
\nonumber\subset&\cV\times M_N\times \bR^N_{\bq_1}\times\bR_{t_1},\\
\label{eq: H_cV,M_N_0}\bH_{\cV, M_N}^0=&\{(x, Q_\flat+Q_\dagg, \bq_1,t_1): t_1-\frac{1}{2}(A_{\ell_x}-2Q_\flat-2Q_\dagg)^{\FT}[\bq_1]=0\}\\
\nonumber\subset&\cV\times M_N\times \bR^N_{\bq_1}\times\bR_{t_1}
\end{align}
where $\ell_x$ is the tangent space $T_xL$ as an affine Lagrangian in $T^*\bR^N$ and for any generalized symmetric matrix $Q$, $Q^{\FT}$ denotes for its Fourier transform, i.e. changing eigenvalues $\lambda$ to $-1/\lambda$ but keeping the eigenspaces unchanged. For any closed smooth hypersurface $H$ in $\bR^N\times\bR_t$, let $\Lambda_H$ denote the negative conormal bundle of $H$, i.e. the part of the conormal bundle inside $T^{*,<0}(\bR^N\times\bR_t)$. 
Note that by the definition of  $(\cU, \cV, Q_\flat^{(1)}, G^{\bullet}, M^{\bullet,\dagg})$, $(A_{\ell_x}-2Q_\flat-2Q_\dagg)^{\FT}$ in (\ref{eq: H_cV,M_N}) is a usual symmetric matrix. Similarly to $\cH_{M_N}$ (\ref{eq: cH_M_N}), one can upgrade $\bH_{\cV, M_N}^-$ to a $VG_N$ module in $\Corr(\Slch)$ (see Lemma \ref{lemma: H_V,M_N} below). 

\begin{remark}\label{remark: H_V,M_N}
By Lemma \ref{lemma: Loc, left orthog}, the correspondence (\ref{diagram: corr base}) gives a right-lax functor
\begin{align*}
(\Sp^\otimes, \Shv^{<0}_{\bL_\Gauss}(\cV\times \bR_{\bq_0}^N\times\bR_{t_0};\Sp))\longrightarrow (\Loc(VG_N;\Sp), \Shv^{<0}_{\bL'_\Gauss}(\cV\times M_N\times\bR_{\bq_1}^N\times\bR_{t_1}\times |\Delta^m|;\Sp),
\end{align*}
where $L'=\varphi_{-(Q_\flat+Q_\dagg)\circ \FT(L)}$ and $\bL'_\Gauss$ is the conic Lagrangian associated to $L'$. Since $L'$ is a graph type Lagrangian by assumption, its tangent space has Legendrian lifting equal to the negative conormal bundle of a smooth hypersurface, which are exactly $\bH_{\cV, M_N}^0$ restricted to each $x\in \cV$ as above. Therefore, $\Shv^{<0}_{\bL'_\Gauss}(\cV\times M_N\times\bR_{\bq_1}^N\times\bR_{t_1}\times |\Delta^m|;\Sp)$ can be identified with $\Loc(\bH^-_{\cV,M_N}; \Sp)$ (modulo the factor $|\Delta^m|$), and the pair $(\Loc(VG_N;\Sp), \Loc(\bH_{\cV,M_N}^-;\Sp))$ can be extended to $(\Shv(VG_N;\Sp), \Shv(\bH_{\cV,M_N}^-;\Sp))$ in $\Mod^{N(\Fin_*)}(\PrstL)$ (see Lemma \ref{lemma: H_V,M_N} below). 
\end{remark}

Consider the following collection of correspondences
\begin{align}
\label{eq: corr_H_cV, M_N} &\xymatrix{&(VG_N, \bH_{\cV, M_N}^-)\ar@{^{(}->}[dr]^{\fj_{\cV,M_N}}\ar[dl]_{id}&\\
(VG_N,  \bH_{\cV, M_N}^-)&&(VG_N, \cV\times M_N\times\bR^N_{\bq_1}\times\bR_{t_1})
},\\
\label{eq: corr_H_cV, M_N, G} 
&\xymatrix{&(VG_N, \bH_{\cV, M_N}^-)\ar[dr]^{p_\bH}\ar[dl]_{id}&\\
(VG_N, \bH_{\cV, M_N}^-)&&(VG_N, \cV\times M_N)
},\\
\label{eq: corr_M_N, G} 
&\xymatrix{&(VG_N, \cV\times M_N)\ar[dr]^{id}\ar[dl]_{p_{VG_N}}&\\
(G_N, \cV\times M_N)&&(VG_N, \cV\times M_N)
},
\end{align}
where $\fj_{\cV,M_N}$ is the natural inclusion, $p_{\bH}$ is the projection to the first two factors, $p_{VG_N}$ is the natural projection from the $VG_N$-module $\cV\times M_N$, inherited from the free $G_N$-module structure under the algebra homomorphism $VG_N\rightarrow G_N$ (so the factor $\bp$ acts trivially), to the free $G_N$-module $\cV\times M_N$. In the followings, $(\cV\times M_N)_{VG_N}^{\lng \bullet\rng_\dagg}$ (and similar notations) is to distinguish its $VG_N$-module structure from its free $G_N$-module structure. 

\begin{lemma}\label{lemma: H_V,M_N}
\begin{itemize}
\item[(a)] One can upgrade $\bH_{\cV, M_N}^-$ to a $VG_N$-module in $\Corr(\Slch)_{\fib,\all}$; \\

\item[(b)]
All three correspondence (\ref{eq: corr_H_cV, M_N}), (\ref{eq: corr_H_cV, M_N, G}) (resp. (\ref{eq: corr_M_N, G}) )
 canonically induce right-lax morphisms (resp. morphisms) in $\cMod^{N(\Fin_*)}(\bCorr(\Slch)_{\fib,\all}^{\openmap})^{\rightlax}$ (resp. $\cMod^{N(\Fin_*)}(\bCorr(\Slch)_{\fib,\all})$). Moreover, they induce a canonical functor
\begin{align}\label{eq: lemma b, H_V,M_N}
&(p_{VG_N})_*\circ (p_\bH^{!})^{-1}\circ \fj_{\cV,M_N}^!:\\
\nonumber&(\Loc(VG_N;\Sp), \Shv^{<0}_{\Lambda_{\bH^0_{\cV,M_N}}}(\cV\times M_N\times\bR^N_{\bq_1}\times\bR_{t_1};\Sp))\rightarrow (\Loc(G_N;\Sp), \Loc(\cV\times M_N;\Sp))
\end{align}
in $\cMod^{N(\Fin_*)}(\PrstL)$, where $(p_{\bH}^!)^{-1}$ means the inverse of 
\begin{align}\label{lemma eq: p_bH}
p_\bH^{!}: (\Loc(VG_N;\Sp), \Loc(\bH_{\cV,M_N}^-;\Sp))\overset{\sim}{\longrightarrow} (\Loc(VG_N;\Sp), \Loc(\cV\times M_N;\Sp)).
\end{align}
\end{itemize}
\end{lemma}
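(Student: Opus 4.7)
Both parts rest on the machinery of \cite[Section 2]{Jin}, closely parallel to the construction of the $VG_N$-module structure on $\cH_{M_N}$ carried out in Section \ref{subsubsec: first corr}.

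For part (a), the plan is to upgrade $\bH_{\cV, M_N}^-$ to a $VG_N$-module by setting
\begin{align*}
(\bH_{\cV, M_N}^-)^{\lng n\rng_\dagg} = \{((Q_i,\bp_i)_{i\in\lng n\rng^\circ}; (x, Q_\flat+Q_\dagg, \bq_1, t_1)) : ((Q_i)_i, Q_\flat+Q_\dagg) \in M_N^{\lng n\rng_\dagg}\},
\end{align*}
with structure maps for $f \colon \lng n\rng_\dagg \to \lng m\rng_\dagg$ that sum the marked $Q_i$'s into the $Q_\dagg$ component, translate $\bq_1 \mapsto \bq_1 + 2\sum \bp_i$, and $t_1 \mapsto t_1 + \sum Q_i[\bp_i']$, exactly parallel to the formula (\ref{eq: module G_univ R M}) for $M_N$. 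The essential check is that the strict inequality defining $\bH_{\cV, M_N}^-$ is stable under this action; this should reduce to a direct computation using the nonnegativity of each $Q_i$ (geometrically, the Hamiltonian flow $\varphi_{-Q_i}$ moves the relevant Lagrangian graph in a monotone direction relative to the hypersurface $\bH_{\cV, M_N}^0$). Once this is in hand, the axioms of \cite[Theorem 2.21]{Jin} (Cartesian-ness for inert morphisms and the fibration hypothesis for active ones) are inherited directly from those already verified for $\cH_{M_N}$, yielding the desired module object in $\Corr(\Slch)_{\fib,\all}$.

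For part (b), I will apply \cite[Proposition 2.22]{Jin} to each of the three correspondences. For (\ref{eq: corr_H_cV, M_N}), the map $\fj_{\cV, M_N}$ is an open embedding compatible with the $VG_N$-actions constructed in (a), yielding a right-lax morphism in $\cMod^{N(\Fin_*)}(\bCorr(\Slch)_{\fib,\all}^{\openmap})^{\rightlax}$. For (\ref{eq: corr_H_cV, M_N, G}), the projection $p_\bH$ is a trivial fibration onto $\cV \times M_N$ with open half-space fibers; the required Cartesian diagrams hold because the $VG_N$-actions on the two sides agree away from the $(\bq_1, t_1)$-factor, along which $p_\bH$ is a fibration. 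For (\ref{eq: corr_M_N, G}), $p_{VG_N}$ is induced by the algebra quotient $VG_N \to G_N$ that kills the $\bp$-factor (which acts trivially on $M_N$), producing a strict morphism in $\cMod^{N(\Fin_*)}(\bCorr(\Slch)_{\fib,\all})$. Applying $\ShvSp_*^!$ transports all three to $\cMod^{N(\Fin_*)}(\PrstL)$.

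The remaining ingredient is the equivalence (\ref{lemma eq: p_bH}): since $p_\bH$ is a fiber bundle with contractible (half-space) fibers, $p_\bH^!$ restricted to locally constant sheaves is an equivalence (fully faithful by proper base change, essentially surjective via a section), and compatibility with the $VG_N$-action lifts this to an equivalence of modules that can be formally inverted. Composing the three morphisms in the stated order then produces the functor (\ref{eq: lemma b, H_V,M_N}). The main obstacle I anticipate is part (a): writing the action formula so that the strict inequality is genuinely preserved with no boundary pathologies, and checking the hypotheses of \cite[Theorem 2.21]{Jin} uniformly in $N$ so that the construction stabilizes; after that, the remainder is a formal consequence of the correspondence formalism.
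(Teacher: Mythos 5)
Your overall scaffolding is the same as the paper's (define the module structure by the explicit action formulas, check well-definedness, then invoke \cite[Theorem 2.21, Proposition 2.22]{Jin} and pass through $\ShvSp_*^!$), and part (b) is treated in essentially the same spirit. However there is a substantive gap in your account of part (a), plus two smaller loose ends.

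The check that the strict inequality defining $\bH_{\cV,M_N}^-$ is preserved under the action does \emph{not} follow from the nonnegativity of the $Q_i$ alone, and the ``monotone direction'' heuristic you offer is not what makes the argument work. The paper reduces to $n=1$, fixes $\widetilde{\bq}_1=\bq_1+2\bp_1$, and studies the function $\bp_1\mapsto \frac{1}{2}(A_{\ell_x}-2Q_\flat-2Q_\dagg)^{\FT}[\bq_1]+Q_1[\bp_1']$. Its critical point is exactly where the two sides of the desired inequality coincide, so one needs the critical point to be a strict \emph{maximum}. That requires the Hessian $(A_{\ell_x}-2Q_\flat-2Q_\dagg)^{\FT}-(2Q_1)^{\FT}|_{\Supp(Q_1)}$ to be strictly negative, and the source of this negativity is the rank equality (\ref{eq: rank, QHam}) built into Definition \ref{def: QHam(U/O)}, not nonnegativity of $Q_1$. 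Without invoking (\ref{eq: rank, QHam}), a nonnegative $Q_1$ could push $\varphi_{Q_\flat+Q_\dagg+Q_1}(\FT(\ell_x))$ out of graph type or cause a rank drop, and the proposed ``direct computation'' would fail. You should make this dependence explicit; it is the crux of (a).

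Two smaller points. First, the ``fibration hypothesis for active ones'' is \emph{not} inherited from $\cH_{M_N}$ (where the relevant check is properness of $\pi_k$); for $\bH_{\cV,M_N}^-$ the paper separately verifies that the fiber of $(\bH_{\cV,M_N}^-)^{\lng 1\rng_\dagg}\to(\bH_{\cV,M_N}^-)^{\lng 0\rng_\dagg}$ is an open disc bundle over the space of splittings, giving local triviality. This is a genuinely different check. Second, in part (b) you establish why $p_\bH^!$ is an equivalence on locally constant sheaves but never say why $\fj_{\cV,M_N}^!$ carries $\Shv^{<0}_{\Lambda_{\bH^0_{\cV,M_N}}}(\cV\times M_N\times\bR_{\bq_1}^N\times\bR_{t_1};\Sp)$ into $\Loc(\bH_{\cV,M_N}^-;\Sp)$; the missing ingredient is the identification of that singular-support category with the essential image of $(\fj_{\cV,M_N})_*$, and the module-compatibility of this identification rests on the observation that for every active map the induced comparison map into the fiber product is an open embedding and a homotopy equivalence of locally trivial fibrations. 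That check is what lets you pass the equivalence through the $\cMod^{N(\Fin_*)}$-structure.
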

\begin{proof}
For (a), we set 
\begin{align*}
(\bH^-_{\cV,M_N})^{\lng n\rng_\dagg}=(M_N^{\lng n\rng_\dagg})_{VG_N}\underset{(M_N^{\lng 0\rng_\dagg})_{VG_N}}{\times} \bH_{\cV,M_N}^-,
\end{align*}
where $(M_N^{\lng n\rng_\dagg})_{VG_N}\rightarrow (M_N^{\lng0\rng_\dagg})_{VG_N}$ is the map corresponding to the unique inert map $\lng n\rng_\dagg\rightarrow \lng 0\rng_\dagg$. We just need to prove that for any active map $\lng n\rng_\dagg\rightarrow \lng 0\rng_\dagg$, the map
\begin{align*}
(\bH^-_{\cV,M_N})^{\lng n\rng_\dagg}&\longrightarrow(\bH^-_{\cV,M_N})^{\lng 0\rng_\dagg}\\
((Q_j,\bp_j)_{j\in\lng n\rng^\circ};x, Q_\flat+Q_\dagg, \bq_1,t_1)&\mapsto (x, Q_\flat+Q_\dagg+\sum\limits_{j}Q_j, \bq_1+2\sum\limits_j\bp_j, t_1+\sum\limits_jQ_j[\bp_j'])
\end{align*}
is well defined, i.e. the image satisfies the condition (\ref{eq: H_cV,M_N}). Note that it suffices to show the case for $n=1$. 

Spelling things out, we need to show the following inequality
\begin{align}\label{ineq: t_1, Q_1}
t_1+Q_1[\bp_1']-\frac{1}{2}(A_{\ell_x}-2Q_\flat-2Q_\dagg-2Q_1)^{\FT}[\bq_1+2\bp_1]<0
\end{align}
under the condition 
\begin{align*}
t_1-\frac{1}{2}(A_{\ell_x}-2Q_\flat-2Q_\dagg)^{\FT}[\bq_1]<0.
\end{align*}

Let $P: b_{\ell_x}\hookrightarrow\bR^N$ be the embedding of the projection of $\ell_x$ and $P^T:\bR^N\rightarrow b_{\ell_x}$ be the orthogonal projection. Since $A_{\ell_x}-2Q_\flat-2Q_\dagg-2Q_1$ is the same as $A_{\ell_x}-2Q_\flat-2Q_\dagg-2PP^TQ_1PP^T$, we may assume without loss of generality that $Q_1$ is supported on $b_{\ell_x}$ and the geometry is happening on $T^*b_{\ell_x}$ (that is we forget about the $\infty$-spectral part of $A_{\ell_x}$). 

Fixing $\widetilde{\bq}_1=\bq_1+2\bp_1$, and let $\bp_1$ vary in $\Supp(Q_1)$, we have 
\begin{align*}
&\partial_{\bp_1}(\frac{1}{2}(A_{\ell_x}-2Q_\flat-2Q_\dagg)^\FT[\bq_1]+Q_1[\bp_1'])\\
&=(A_{\ell_x}-2Q_\flat-2Q_\dagg)^\FT(-\frac{1}{2}\widetilde{\bq}_1+\bp_1)-(2Q_1)^{\FT}\bp_1=\mathbf{0}\\
\end{align*}
if and only if 
\begin{align*}
\varphi^1_{Q_1}(\widetilde{\bq}_1, (A_{\ell_x}-2Q_\flat-2Q_\dagg-2Q_1)^\FT\widetilde{\bq}_1)=(\bq_1,\bp_1'). 
\end{align*}
Moreover, the Hessian
\begin{align*}
(A_{\ell_x}-2Q_\flat-2Q_\dagg)^\FT-(2Q_1)^{\FT}|_{\Supp(Q_1)}
\end{align*}
is strictly negative, which follows from the rank equality (\ref{eq: rank, QHam}).
Therefore, 
\begin{align*}
\frac{1}{2}(A_{\ell_x}-2Q_\flat-2Q_\dagg)^\FT[\bq_1]+Q_1[\bp_1']
\end{align*}
obtains a maximum at $\varphi^1_{Q_1}(\widetilde{\bq}_1, (A_{\ell_x}-2Q_\flat-2Q_\dagg-2Q_1)^\FT\widetilde{\bq}_1)$. In that case, 
\begin{align*}
\frac{1}{2}(A_{\ell_x}-2Q_\flat-2Q_\dagg)^\FT[\bq_1]+Q_1[\bp_1']-\frac{1}{2}(A_{\ell_x}-2Q_\flat-2Q_\dagg-2Q_1)^{\FT}[\bq_1+2\bp_1]=0,
\end{align*}
and (\ref{ineq: t_1, Q_1}) follows. 

Furthermore, we see from the above that the fiber of $(\bH_{\cV, M_N}^{-})^{\lng 1\rng_\dagg}\rightarrow (\bH_{\cV, M_N}^{-})^{\lng 0\rng_\dagg}$ over a fixed $h=(x, Q_\flat+\hat{Q}_\dagg, \widetilde{\bq}_1,\widetilde{t}_1)$ is isomorphic to the region in 
\begin{align*}
\{(Q_1, \bp_1;x, Q_\flat+Q_\dagg): Q_\dagg+Q_1=\hat{Q}_\dagg\} 
\end{align*}
cut out by 
\begin{align*}
&t_1-\frac{1}{2}(A_{\ell_x}-2Q_\flat-2Q_\dagg)^{\FT}[\bq_1]<0\\
&\Leftrightarrow\\
&\frac{1}{2}(A_{\ell_x}-2Q_\flat-2Q_\dagg)^\FT[\widetilde{\bq}_1-2\bp_1]+Q_1[\bp_1']>\widetilde{t}_1.
\end{align*}
So the fiber is isomorphic to an open disc bundle over the space of $Q_1$. This shows that the vertical arrows in defining $\bH^-_{\cV,M_N}$ as a $VG_N$-module in $\Corr(\Slch)$ are all locally trivial fibrations. 

For part (b), one can show the three correspondences all induce right-lax morphism in $\cMod^{N(\Fin_*)}(\bCorr(\Slch)_{\fib,\all}^{\openmap})^{\rightlax}$ in the same way as the proof of Lemma \ref{eq: corresp cH_M_N} (the last one induces a genuine morphism). In particular, they respectively induce left-lax functors  
\begin{align*}
&\fj_{\cV,M_N}^!: (\Shv(VG_N;\Sp), \Shv(\cV\times M_N\times\bR^N_{\bq_1}\times\bR_{t_1};\Sp))\rightarrow (\Shv(VG_N;\Sp), \Shv(\bH_{\cV,M_N}^-;\Sp))\\
&p_\bH^!:  (\Shv(VG_N;\Sp), \Shv(\cV\times M_N;\Sp))\rightarrow (\Shv(VG_N;\Sp), \Shv(\bH_{\cV,M_N}^-;\Sp)),
\end{align*}
and a functor
\begin{align*}
(p_{VG_N})_*: (\Shv(VG_N;\Sp), \Shv(\cV\times M_N;\Sp))\rightarrow (\Shv(G_N;\Sp), \Shv(\cV\times M_N;\Sp)). 
\end{align*}
Now it suffices to show that $\fj_{\cV,M_N}^!$, $p_\bH^!$ and $(p_{VG_N})_*$ restricted to the relevant full subcategories as in (\ref{eq: lemma b, H_V,M_N}) and (\ref{lemma eq: p_bH}) are equivalences. 
This is almost evident, as 
\begin{itemize}
\item
$\Shv^{<0}_{\Lambda_{\bH^0_{\cV,M_N}}}(\cV\times M_N\times\bR^N_{\bq_1}\times\bR_{t_1})$ can be identified with the essential image of 
\begin{align*}
(\fj_{\cV,M_N})_*: \Loc(\bH_{\cV,M_N}^-;\Sp)\rightarrow \Shv(\cV\times M_N\times\bR^N_{\bq_1}\times \bR_{t_1};\Sp),
\end{align*}
\item
for any active map $\lng n\rng_\dagg\rightarrow\lng m\rng_\dagg$ in $N(\Fin_{*,\dagg})$
\begin{align*}
\xymatrix{
(\bH_{\cV,M_N}^-)^{\lng n\rng_\dagg}\ar[dr]^{f}\ar@/_1pc/ [ddr] \ar@/^1pc/[drr]&&\\
&(\bH_{\cV,M_N}^-)^{\lng m\rng_\dagg}\underset{(\cV\times M_N)_{VG_N}^{\lng m\rng_\dagg}}{\times} (\cV\times M_N)_{VG_N}^{\lng n\rng_\dagg}\ar[r]\ar[d]&(\cV\times M_N)_{VG_N}^{\lng n\rng_\dagg}\ar[d]\\
&(\bH_{\cV,M_N}^-)^{\lng m\rng_\dagg}\ar[r]&(\cV\times M_N)_{VG_N}^{\lng m\rng_\dagg}
}
\end{align*}
the map $f$ is an open embedding that induces a homotopy equivalence between locally trivial fibrations over $(\bH_{\cV,M_N}^-)^{\lng m\rng_\dagg}$.
\end{itemize}

 \end{proof}

The composition of the first set (\ref{diagram: corr base}) and the second set (\ref{eq: projector, cV}),  (\ref{eq: corr_H_cV, M_N}), (\ref{eq: corr_H_cV, M_N, G}), (\ref{eq: corr_M_N, G}) of correspondences\footnote{More precisely, one should product the second set of correspondences everywhere by $|\Delta^m|$ and modify $\bH^+_{\cV, M_N}\times |\Delta^m|$ by changing $Q_\flat+Q_\dagg$ to $Q_{\flat}+Q_\dagg+\tau_m(u), u\in |\Delta^m|, \tau_m\in \cS_{\cY}$.} for all $N\gg 0$, together with Remark \ref{remark: H_V,M_N}, Lemma \ref{lemma: H_V,M_N} and results in Appendix \ref{sec: Appendix}, yield the following. 
\begin{prop}\label{prop: muShv, J-equiv}
Given any $\cY=(\cU, \cV,Q_\flat, G, M)\in \QHam_L(U/O)$, and any $m$-simplex $\tau_m$ in $\cS_\cY$, the composition of the first and the second set of correspondences induces a right-lax functor
\begin{align}\label{eq: prop, Sp, Loc, G}
(\Sp^\otimes, \varprojlim\limits_N\Shv^{<0}_{\bL_{\Gauss}}(\cV\times \bR_{\bq_0}^N\times\bR_{t_0};\Sp))\longrightarrow (\varprojlim\limits_N\Loc(G_N;\Sp)^\otimes, \varprojlim\limits_N\Loc(\cV\times M_N\times |\Delta^m|;\Sp)),
\end{align}
that further induces an equivalence
\begin{align}\label{eq prop: Shv, J-equiv}
F_{\tau_m}:\varprojlim\limits_N\Shv^{<0}_{\bL_{\Gauss}}(\cV\times \bR_{\bq_0}^N\times\bR_{t_0};\Sp)&\overset{\sim}{\longrightarrow} \varprojlim\limits_N\Loc(\cV\times M_N\times |\Delta^m|;\Sp)^{J\text{-equiv}}\\
\nonumber&\overset{\sim}{\longrightarrow} \Loc(\cV\times M;\Sp)^{J\text{-equiv}},
\end{align}
where the last equivalence is from $!$-pushforward along the projection to $\cV\times M$. 
\end{prop}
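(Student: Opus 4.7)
The plan is to build the right-lax functor at each finite level $N$ by assembling the two sets of correspondences in $\cMod^{N(\Fin_*)}(\bCorr(\Slch)^{\propmap}_{\all,\all})^{\rightlax}$ and applying $\ShvSp^!_*$ with restrictions to the relevant full subcategories, then stabilize in $N$. Starting from the right-lax morphism of Lemma \ref{eq: corresp cH_M_N} (from diagram (\ref{diagram: corr base})), I would compose on the right with the convolution correspondence (\ref{eq: projector, cV}) to project onto $\Shv^{<0}$; by Lemma \ref{lemma: Loc, left orthog} the $VG_N$-action preserves the left orthogonal, and by Remark \ref{remark: H_V,M_N} the resulting target category is identified with $\Loc(\bH^-_{\cV,M_N}\times|\Delta^m|;\Sp)$. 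I then compose with the morphisms (\ref{eq: corr_H_cV, M_N}), (\ref{eq: corr_H_cV, M_N, G}), (\ref{eq: corr_M_N, G}) as packaged in Lemma \ref{lemma: H_V,M_N}(b), which successively apply $\fj^!_{\cV,M_N}$, the inverse of (\ref{lemma eq: p_bH}), and $(p_{VG_N})_*$, landing in $(\Loc(G_N;\Sp)^\otimes, \Loc(\cV\times M_N\times|\Delta^m|;\Sp))$. Stabilizing via the algebra homomorphisms $G_N\hookrightarrow G_{N+1}$ and $M_N\hookrightarrow M_{N+1}$ yields (\ref{eq: prop, Sp, Loc, G}); $!$-pushforward along the contractible $|\Delta^m|\to pt$ then supplies the second identification in (\ref{eq prop: Shv, J-equiv}).

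To identify the essential image with the $J$-equivariant subcategory and verify that $F_{\tau_m}$ is an equivalence, I would observe that the commutative algebra structure on $\Loc(G_N;\Sp)$ is precisely the one induced by the $\Fin_*$-structure on $G_N$ inherited from $G_{Q_\flat,\ell;N}^\bullet$. Via the dictionary between Morse transformations and paths in $U/O$ (Subsection \ref{subsec: Morse transf}) and the Bott periodicity presentation $\sfB G^{\lng 1\rng}\simeq U/O$ (Subsection \ref{subsec: Bott}), the associated character $G_N\to\Pic(\bS)$ is the pullback of the $J$-homomorphism $\Omega(U/O)\simeq\bZ\times BO\to\Pic(\bS)$, so a $\Loc(G_N;\Sp)^\otimes$-module structure on a local system over $\cV\times M_N$ (viewed as a local $G_N$-torsor approximation of the pullback $\Omega(U/O)$-torsor $\cP_L|_\cV$) is exactly the datum of a $J$-equivariant local system. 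The equivalence statement then reduces, by sheaf descent on $L$, to the case of $\cV$ contractible and sufficiently small that the Morse trivialization (\ref{eq: corr H cU}) is available, where both sides become non-canonically $\Mod(\bS)$ and the composed functor is the identity twisted by the tautological $J$-equivariant line on $M_N$. The heart of this argument was carried out in \cite{Jin}.

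The principal obstacle, and the main novelty beyond \cite{Jin}, is coherence: $F_{\tau_m}$ must be functorial in $\cY\in\QHam_L(U/O)^{op}$ and in $\tau_m\in\cS_\cY$, since Subsection \ref{subsec: F_L} uses this to assemble the global functor $F_L$. Compatibility with the morphisms $\fj_{Q_{12}}$ of Definition \ref{def: QHam(U/O)}(ii) requires the module structures to be defined natively at each finite level $N$, so that the inclusions $G_1\hookrightarrow G_2$ of iia) lift to genuine commutative-algebra homomorphisms in $\CAlg(\Corr(\Slch))$; this relies on the upgraded correspondence-of-$\Fin_*$-objects machinery of Appendix \ref{sec: Appendix}. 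The simplicial parameter $\tau_m$ together with the contractibility of $\cS_\cY$ is precisely what allows this to be done coherently over the simplex $|\Delta^m|$. Once this bookkeeping is in place, the equivalence in (\ref{eq prop: Shv, J-equiv}) follows from the pointwise identification described above.
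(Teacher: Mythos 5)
Your proposal is correct and follows essentially the same route as the paper: the paper's proof of this proposition consists precisely of the remark that composing the first set (\ref{diagram: corr base}) with the second set (\ref{eq: projector, cV}), (\ref{eq: corr_H_cV, M_N}), (\ref{eq: corr_H_cV, M_N, G}), (\ref{eq: corr_M_N, G}) of correspondences, using Remark \ref{remark: H_V,M_N}, Lemma \ref{lemma: H_V,M_N}, Lemma \ref{lemma: Loc, left orthog}, and the Appendix \ref{sec: Appendix} machinery, then applying $\ShvSp_*^!$ and restricting to the microlocal subcategories and taking $\varprojlim_N$, and then citing \cite{Jin} for the identification with $J$-equivariant local systems via $J=\pi_*\omega_{VG}$. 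One small inaccuracy: you invoke ``sheaf descent on $L$'' to reduce to contractible $\cV$, but the proposition is stated for a fixed $\cV$ attached to the object $\cY$, and the equivalence is established directly at that $\cV$ via the Morse trivialization; no descent step is needed or used inside the proof of this proposition (the descent is deferred to Proposition \ref{prop: proof of thm}).
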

Here recall from \cite{Jin} that 
\begin{align*}
J: \coprod\limits_n BO(n)\simeq G\longrightarrow \Pic(\bS)
\end{align*}
is realized as $\pi_*\omega_{VG}$, which is a commutative algebra object in $\Loc(G;\Sp)^{\otimes}$. Here 
\begin{align*}
\pi_*: \varprojlim\limits_N\Loc(VG_N;\Sp)^{\otimes}\longrightarrow  \varprojlim\limits_N\Loc(G_N;\Sp)^\otimes
\end{align*}
is induced from the family of commutative algebra homomorphisms $VG_N^\bullet\rightarrow G_N^\bullet$ in $\Corr(\Slch)_{\fib,\all}$ and an application of Theorem \ref{thm: appendix}. 
(see \cite[Section 4]{Jin} for more details). In particular, under (\ref{eq: prop, Sp, Loc, G}), $\bS\in \CAlg(\Sp^\otimes)$ goes to $\pi_*\omega_{VG}\in \CAlg(\varprojlim\limits_N\Loc(G_N;\Sp)^\otimes
)\simeq \CAlg(\Loc(G;\Sp)^{\otimes})$. Note that because of the properness of the map $G^{\lng n\rng}\rightarrow G^{\lng m\rng}$ for any active morphism $\lng n\rng\rightarrow \lng m\rng$, the symmetric monoidal structure on $\Loc(G;\Sp)^{\otimes}$ coincides with that from viewing $G$ as a commutative monoid in $\Spc$ (with $!$-pullback and $!$-pushforward structure maps).

\subsubsection{Construction of $F_L$}
Given objects $\cY_i=(\cU_i,\cV_i, Q_\flat^{(i)}, G^{\bullet}_{(i)}, M_{(i)}^{\bullet,\dagg}), i=1,2$,  for any $n$-simplex $\sigma_n$ in 
$\Maps_{\QHam_L(U/O)}(\cY_1, \cY_2)$
defined by a family of $Q_{12, u}\in G_{(2)}, u\in |\Delta^n|$ (the geometric realization of any $n$-simplex will be identified with $\sum\limits_{j=1}^{n+1}t_j=1$ in $\bR^{n+1}$) such that $\widetilde{Q}_u=Q_\flat^{(2)}+Q_{12,u}-Q_\flat^{(1)}\geq 0$, and an $n$-simplex $\tau_{n}: |\Delta^n|\rightarrow\cS_{\cY_2}$ (defined in Subsection \ref{subsubsec: first corr}
), consider the following commutative diagram for $N\gg 0$ (which can be viewed as an object in 
\begin{align*}
\Fun(\Delta^1\times \Delta^1, \Corr(\Fun^\diamond(N(\Fin_{*,\dagg}), (\Slch)))_{\inert, \all}),
\end{align*}
with the vertex $(0,0)$ and $(1,1)$ be respectively the bottom right corner and the top left corner in the diagram)
\begin{align}\label{eq: square in Corr}
\xymatrix@=0.6em{
&(VG_{(1), N}, \cV_{1}\times\cH_{M_{(1), N}}\times |\Delta^n|)\ar[dl]_{p_{1,\widetilde{\tau}_n}^{(1)}\ \ \ }\ar[dr]^{\ \ \ \ \ \ p_2^{(1)}}&\\
(VG_{(1),N}, {\substack{\cV_{1}\times M_{(1),N}\times \bR_{\bq_1}^N\\
\times \bR_{t_1}
\times |\Delta^n|}})&&(pt, \cV_{1}\times\bR^{N}_{\bq_0}
\times\bR_{t_0})\\
&(VG_{(1), N}, \cV_{2}\times\cH_{M_{(1), N}}\times |\Delta^n|)\ar@^{(->}[uu]\ar[dr]^{\ \ \ \ \ \ p^{(1)}_2|_{\cV_{2}}}\ar[dd]^{\fri_{\sigma_n}}\ar[dl]_{p^{(1)}_{1, \widetilde{\tau}_n}|_{\cV_{2}}\ \ \ }&\\
(VG_{(1),N}, {\substack{\cV_{2}\times M_{(1),N}\times \bR_{\bq_1}^N\\
\times \bR_{t_1}
\times |\Delta^n|}})\ar@^{(->}[uu]\ar@^{(->}[dd]_{\frj_{\sigma_n}}&&(pt, \cV_{2}\times\bR^{N}_{\bq_0}
\times\bR_{t_0})\ar[dd]^{id}\ar@^{(->}[uu]\\
&(VG_{(2), N}, \cV_{2}\times \cH_{M_{(2), N}}\times |\Delta^n|)\ar[dr]^{\ \ \ \ p_2^{(2)}}\ar[dl]_{p_{1, \tau_{n}}^{(2)}}&\\
(VG_{(2),N}, {\substack{\cV_{2}\times M_{(2), N}\times \bR_{{\bq}_1}^N\\
\times\bR_{t_1}\times |\Delta^n|}})&&(pt, \cV_{2}\times \bR^{N}_{\bq_0}\times \bR_{t_0})
}
\end{align}
where $\frj_{\sigma_n}$ is the embedding induced from the morphisms $(VG_{(1),N}, M_{(1),N})\rightarrow (VG_{(2),N}, M_{(2),N})$ determined by $\sigma_n$ and the identity map on the other factors, 
\begin{align*}
&\fri_{\sigma_n}: (Q_1,\bp; \bq_0,\bq_1,t_0,t_1,Q_\flat^{(1)}, u)\mapsto (Q_1,\bp;\bq_0,\bq_1+2\widetilde{Q}_u\bq_0,t_0,t_1+\widetilde{Q}_u(\bq_0),Q^{(2)}_\flat+Q_{12,u},u ),
\end{align*}
and $\widetilde{\tau}_n$ is the $n$-simplex in $\cS_{\cY_1}=\cS_{(\cU_1, Q_\flat^{(1)}, G_{(1)}, M_{(1)})}$ defined by 
\begin{align*}
\widetilde{\tau}_{n}(u)=Q_\flat^{(2)}+Q_{12,u}-Q_\flat^{(1)}+\tau_{n}(u).
\end{align*}
The top vertical arrows are all obvious inclusions induced from $\cV_{2}\hookrightarrow \cV_{1}$.

These 
define a natural transformation
\begin{align}\label{diagram: Corr Y_1, Y_2}
\begin{xy}
\xymatrix{\Sing_n(\cS_{\cY_2}\times \Maps_{\QHam_L(U/O)}(\cY_1,\cY_2))
\ar[r]^{pr_1}\ar[d]_{\frg_{n}}
&\Sing_n(\cS_{\cY_2})
\ar[d]^{\frf_{\cY_2,n}}\ar[dl]**\dir{=}
\\
\Sing_n(\cS_{\cY_1})
\ar[r]_{\frf_{\cY_1, n}\ \ \ \ \ \ \ \ \ \ \ \ \ }
&\Fun(\Delta^1\times \bZ^{op}_{\gg 0}, \Corr(\Fun^\diamond(N(\Fin_{*,\dagg}), (\Slch)))_{\inert, \all}) 
}
\end{xy}
\end{align}
where $\bZ_{\gg 0}$ is the indexed category for the dimensions $N$, $\frg_n(\tau_n, \sigma_n)=\widetilde{\tau}_n$,  $f_{\cY_2,n}$ is sending $\tau_{n}\in \Sing_n(\cS_{\cY_2})$ to the bottom correspondence in (\ref{eq: square in Corr}) and $f_{\cY_1,n}$ is sending $\widetilde{\tau}_n\in \Sing_n(\cS_{\cY_1})$ to the top correspondence in (\ref{eq: square in Corr}). 
Ideally, we would like to extend diagram (\ref{diagram: Corr Y_1, Y_2}) by further composing with the functor 
\begin{align}\label{eq: compose muShv}
\Fun(\Delta^1, \Corr(\Fun^\diamond(N(\Fin_{*,\dagg}), (\Slch)))_{\inert, \all})\longrightarrow \Fun(\Delta^1, \PrstL)
\end{align}
from first applying $\ShvSp_*^!$ to $\Fun(\Delta^1, \cMod^{N(\Fin_*)}(\PrstR)^{\rightlax})$(note that the horizontal correspondences in (\ref{diagram: Corr Y_1, Y_2}) are exactly the first set of correspondences in Subsubsection \ref{subsubsec: first corr}
), then applying the second set of correspondences in Subsubsection \ref{subsubsec: second corr} (restricting to the microlocal sheaf categories $\Shv_{\bL_\Gauss}^{<0}(\cV_{i}\times \bR_{\bq_0}^N\times\bR_{t_0}\times|\Delta^n|), i=1,2$), and finally getting to $F_{\tau_n}$ (\ref{eq prop: Shv, J-equiv}) by Proposition \ref{prop: muShv, J-equiv} after taking $\varprojlim\limits_N$ everywhere. The goal is to construct a canonical functor 
\begin{align}\label{eq: F_L, sketch}
F_L: \QHam_L(U/O)^{op}\longrightarrow \Fun(\Delta^1,\PrstL). 
\end{align}
from these data. 
A small problem about doing this is that if we do $*$-pushforward along the upper vertical arrows in (\ref{eq: square in Corr}), we will create more singular support than required (so not well defined). To remedy this, we will do the followings:
\begin{itemize}
\item First, we revert all arrows of the form 
\begin{align}\label{eq: QHam hor}
(\cU_1, \cV_1, Q_\flat, G^\bullet, M^{\bullet,\dagg})\rightarrow (\cU_2, \cV_2, Q_\flat, G^\bullet, M^{\bullet,\dagg})
\end{align}
in $\QHam_L(U/O)$, i.e. $\cU_1\supset \cU_2$ and $\cV_1\supset \cV_2$ but the latter three components are identical and the morphism is identity on those factors. The new $\infty$-category, which has the same object as $\QHam_L(U/O)$ but requiring the reversed containment relation on $\cV_i$ and $\cU_i$ in the definition of morphism spaces (\ref{eq: Maps, QHam_L}), is denoted by $\QHam_L^{\cV\op}(U/O)$.
 
\item We replace $\QHam_L(U/O)$ by $\QHam_L^{\cV\op}(U/O)$ in (\ref{diagram: Corr Y_1, Y_2}) and composing with the functor (\ref{eq: compose muShv}) described above, then we get a natural transformation between two functors 
\begin{align*}
F_{\cY_2, n}\circ pr_1\Rightarrow F_{\cY_1, n}\circ\frg_n: \Sing_n(\cS_{\cY_2}\times \Maps_{\QHam^{\cV\op}_L(U/O)}(\cY_1,\cY_2))\longrightarrow \Fun(\Delta^1,\PrstL)
\end{align*}

\item We will then construct a canonical functor 
\begin{align*}
(\QHam_L^{\cV\op}(U/O))^{op}\longrightarrow \Fun(\Delta^1,\bPrstL). 
\end{align*}
Let $\opincl$ be the class of morphisms in $\QHam_L^{\cV\op}(U/O)$ coming from reversing the class in (\ref{eq: QHam hor}). The above functor sends $\opincl$ to left adjointable functors in $\Fun(\Delta^1,\bPrstL)$ and satisfies the right Beck-Chevalley condition with respect to $vert=\opincl$ (cf. \cite[Chapter 7, Definition 3.1.5]{GaRo}), so we can revert them by taking their left adjoints and get the desired functor (\ref{eq: F_L, sketch}). More explicitly, we can view $\QHam_L(U/O)$ as the correspondence category of $\QHam_L^{\cV\op}(U/O)$ with vertical arrows given by $\opincl$ and horizontal arrows given by the class with $\cU_1=\cU_2, \cV_1=\cV_2$ (every morphism has such a unique factorization), then apply\footnote{In applying the theorem, use $horiz=\all$, then restrict to the so defined subclass of horizontal arrows.} \cite[Chapter 7, Theorem 3.2.2]{GaRo}. 
\end{itemize}

By the compatibility of the diagram (\ref{diagram: Corr Y_1, Y_2}) with respect to degeneration and face maps, the diagrams together with (\ref{eq: compose muShv}) assemble to define a natural transformation
\begin{align*}
\xymatrix{
|\Sing_\bullet (\cS_{\cY_2}\times \Maps_{\QHam^{\cV\op}_L(U/O)}(\cY_1,\cY_2))|\ar[r]^{\ \ \ \ \ \ \ \ \ \ \ \ \ \ \ \ \pr_1}\ar[d]_{\frg}&|\Sing_\bullet(\cS_{\cY_2})|\ar[dl]**\dir{=}\ar[d]^{F_{\cY_2}}\\
|\Sing_\bullet(\cS_{\cY_1})|\ar[r]_{F_{\cY_1}}&\Fun(\Delta^1, \PrstL)
}.
\end{align*}

For simplicity, in the following we will not distinguish a space with its singular simplicial complex (or further taking geometric realization). Let $F_L(\cY_i)$ be the left Kan extension of $F_{\cY_i}$ along the map $\cS_{\cY_i}\rightarrow pt$. Since $\cS_{\cY_i}$ is contractible, $F_L(\cY_i)$ is canonically isomorphic to the object 
\begin{align*}
(F_{\tau_n}: \varprojlim\limits_N\Shv^{<0}_{\bL_\Gauss}(\cV_i\times \bR^N_{\bq_0}\times\bR_{t_0};\Sp)\overset{\sim}{\longrightarrow} \Loc(\cV_i\times M;\Sp)^{J\text{-equiv}}),
\end{align*} 
for any $\tau_n\in \Sing_n(\cS_{\cY_i})$, and this gives a canonical morphism (up to a contractible space of choices)
\begin{align}\label{eq: Y_i, F_i}
F_L(\cY_1,\cY_2): \Maps_{\QHam^{\cV\op}_L(U/O)}(\cY_1, \cY_2)\rightarrow \Maps_{\Fun(\Delta^1, \PrstL)}(F_L(\cY_2), F_L(\cY_1)). 
\end{align}

To see that the morphism (\ref{eq: Y_i, F_i}) is compatible with compositions, note that for three objects $\cY_i, i=1,2,3$, we have the following diagram for every $n\geq 0$:
\begin{align}\label{diagram: Corr Y_1, Y_2, Y_3}
\xymatrix@C=0.6em{
&\Sing_n({\substack{\cS_{\cY_3}\times \Maps(\cY_2, \cY_3)\ar[dl]_{\comp}\\
\times \Maps(\cY_1, \cY_2)}})\ar[r]^{pr_1}\ar[d]_{\frg_{3,2}}&\Sing_n(\cS_{\cY_3})\ar@/^3pc/[dd]^{\frf_{\cY_3}}\ar[dl]**\dir{=}\\
\Sing_n(\cS_{\cY_3}\times \Maps(\cY_1, \cY_3))\ar@/^4pc/[urr]^{pr_1}\ar[dr]_{\frg_{3,1}}&\Sing_n(\cS_{\cY_2}\times \Maps(\cY_1, \cY_2))\ar[d]_{\frg_{2,1}}\ar[r]^{ pr_1}&\Sing_n(\cS_{\cY_2})\ar[d]^{\frf_{\cY_2}}\ar[dl]**\dir{=}\\
&\Sing_n(\cS_{\cY_1})\ar[r]^{\frf_{\cY_1}\ \ \ \ \ \ \ \ \ \ \ \ \ \ \ \ \ }&\Fun({\substack{\Delta^1\times \bZ^{op}_{\gg0}, \Corr(\Fun^\diamond(N(\Fin_{*,\dagg}), \Slch)})_{\inert, \all}})
}
\end{align}
where 
left square is strictly commutative and there is a strict equality between the composite natural transformation
\begin{align*}
\frf_{\cY_3}\circ pr_1\Rightarrow \frf_{\cY_2}\circ pr_1\circ\frg_{3,2}\Rightarrow \frf_{\cY_1}\circ \frg_{2,1}\circ\frg_{3,2}
\end{align*}
and the natural transformation 
\begin{align*}
\frf_{\cY_3}\circ pr_1\Rightarrow \frf_{\cY_1}\circ \frg_{3,1}\circ \comp. 
\end{align*}
Applying $\ShvSp_*^!$ and restricting to the microlocal sheaf categories, then applying Proposition \ref{prop: muShv, J-equiv} and using the compatibility with face and degeneracy maps as above, we get a contractible space of commutative diagrams 
\begin{align}\label{diagram: Y_1, Y_2, Y_3}
\xymatrix{
F_L(\cY_1,\cY_2,\cY_3): &\Maps(\cY_2,\cY_3)\times \Maps(\cY_1,\cY_2)\ar[r]\ar[d]&\Maps(\Delta^2, \Fun(\Delta^1, \PrstL))\ar[d]\\
&\varprojlim\limits_{I\subsetneq \{0,1,2\}}\prod\limits_{i<j\in I}\Maps(\cY_{3-j},\cY_{3-i})\ar[r]^{\substack{(F_L(\cY_{3-i},\cY_{3-j}),\\
 F_L(\cY_k))}}&\Maps(\partial \Delta^2, \Fun(\Delta^1, \PrstL))
}
\end{align}
whose restriction to the second row gives a trivial Kan fibration to the (contractible) space of maps defined by $F_L(\cY_i, \cY_j)$ and $F_L(\cY_k)$  as above. 
Clearly, the space of $F_L(\cY_1,\cY_2, \cY_3)$ and $F_L(\cY_1,\cY_2)$ are compatible with degenerations, e.g. if $\cY_1=\cY_2$, then $F_L(\cY_1, \cY_1, \cY_3)|_{\{id_{\cY_1}\}\times \Maps(\cY_1,\cY_3)}$ canonically factors through $\Maps(\Delta^{\{0,1\}},\Fun(\Delta^1,\PrstL))$. More systematically, one adds the degeneration maps into diagram (\ref{eq: Y_i, F_i}) and (\ref{diagram: Y_1, Y_2, Y_3}) if some adjacent $\cY_j$'s are equal.

By induction on the number of $\cY_i$'s involved, we get a canonical functor (up to a contractible space of  choices) in $\Spc^{\Delta^{op}}$
\begin{align}\label{eq: F_L, Seq}
F_L: (\coprod\limits_{(\cY_1,\cdots, \cY_{\bullet+1})}\Maps(\cY_\bullet, \cY_{\bullet+1})\times\cdots\times\Maps(\cY_1,\cY_2))\overset{F_L(\cY_1,\cdots, \cY_{\bullet+1})}{\longrightarrow} \Maps(\Delta^\bullet, \Fun(\Delta^1, \PrstL)).
\end{align}
Since the left-hand-side is equivalent to $\Seq_\bullet (\QHam_L^{\cV\op}(U/O)^{op})$, the functor $F_L$ (\ref{eq: F_L, Seq}) corresponds to a functor in $\OneCat$, denoted by $F_L$ as well 
\begin{align}\label{eq: modified F_L}
F_L: \QHam_L^{\cV\op}(U/O)^{op}\longrightarrow \Fun(\Delta^1, \PrstL).
\end{align}
As remarked before, by reversing the arrows in $\opincl$ through sending to the left adjoints (for this we need to replace $\Fun(\Delta^1, \PrstL)$ by $\Fun(\Delta^1, \bPrstL)$), and by some abuse of notations, we get the desired functor $F_L$ (\ref{eq: F_L, sketch}).

\subsection{The main diagram of categories and functors}

The main diagram of categories and functors is the following (in which the lower right square and the upper rightmost triangle are naturally commutative):  
\begin{align}\label{diagram: main Kan}
\xymatrix{
&\QHam_L(U/O)^{op}\ar[dd]_{p_L}\ar@/^3pc/[ddrrr]^{F_L}\ar[dl]_{(G,M)}\ar[dr]^{\Loc(-;\Sp)^{J\text{-equiv}}}&\\
\cMod^{N(\Fin_*)}(\Spc)^{op}&&\PrstL\\
&\Open(L)\ar[rrr]_{\RKan_{p_L}(F_L)}\ar[dr]_{\RKan_{p_L}(ev_0\circ F_L)}\ar[ur]_{\ \ \ \  \RKan_{p_L} (\Loc(-;\Sp)^{J\text{-equiv}})}\ar[ul]^{\RKan_{p_L}(G,M)\ \ \ \ }&&&\Fun(\Delta^1,\PrstL)\ar[ull]_{ev_1}\ar[dll]^{ev_0}\\
&&\PrstL
}
\end{align}
where 
\begin{itemize}
\item[(i)]$ (G,M)$ is the functor sending any object $(\cU, \cV\overset{\text{open}}{\subset} L, G^\bullet, M^{\bullet,\dagg})$ to the pair $(G,M)$ viewed as an object in $\cMod^{N(\Fin_*)}(\Spc)^{op}$, and taking any morphism to the corresponding embedding; 

\item[(ii)] the functor $F_L:  \QHam_L(U/O)^{op}\rightarrow \Fun(\Delta^1, \PrstL)$ has been constructed in Section \ref{subsec: F_L}, as a small modification of (\ref{eq: modified F_L}) with some abuse of notations. Its evaluation at $0\in \Delta^1$ gives $\mu\cShv_{L}(-;\Sp)$,  and its evaluation at $1\in \Delta^1$ gives $\Loc(-;\Sp)^{J\text{-equiv}}$;
 
\item[(iii)] We take the right Kan extension of the functor $(G,M)$ (resp. $\Loc(-;\Sp)^{J\text{-equiv}}$, $F_L$ and $ev_0\circ F_L$) along $p_L$, and denote it by $\RKan_{p_L}(G,M)$ (resp. $\RKan_{p_L}(\Loc(-;\Sp)^{J\text{-equiv}})$, $\RKan_{p_L}(F_L)$ and $\RKan_{p_L}(ev_0\circ F_L)$). 
\end{itemize}

For any topological space $X$, let $\PcoShv(X;\PrkL)$ (resp. $\coShv(X;\PrkL)$) be the $(\infty,1)$-category of pre-cosheaves (resp. cosheaves) of (presentable) $\bk$-linear categories on $X$, with continuous (i.e. colimit-preserving) corestriction functors. 
\begin{lemma}\label{lemma: Loc_F^pre}
For any topological space $X$, given a presheaf of spaces $\cF^{pre}: \Open(X)^{op}\rightarrow \Spc$ over $X$, let $\cLoc_{\cF^{pre}}\in \PcoShv(X;\PrkL)$ defined by 
\begin{align*}
\cLoc_{\cF^{pre}}:&\Open(X)\mapsto \PrkL\\
&U\mapsto \Loc(\cF^{pre}(U);\bk)\simeq \Fun(\cF^{pre}(U),\Mod(\bk))
\end{align*}
Let $\cF$ be the sheafification of $\cF^{pre}$. Then $\cLoc_{\cF}$ gives a cosheafification of $\cLoc_{\cF^{pre}}$. Equivalently, this means for any cosheaf $\fG\in \coShv(X;\PrkL)$, we have an isomorphism of spaces ($\infty$-groupoids)
\begin{align*}
\Maps_{\PcoShv(X;\PrkL)}(\fG, \cLoc_{\cF^{pre}})\simeq \Maps_{\coShv(X;\PrkL)}(\fG, \cLoc_{\cF}).
\end{align*} 

\end{lemma}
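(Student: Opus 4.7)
The strategy is to use a $\Spc$-enriched adjunction to transport the $\Maps$-equivalence from presheaves of presentable categories to presheaves of spaces, where the sheafification universal property can be directly applied.

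The starting point is the standard cotensor identity in $\PrkL$: for any $\cC\in\PrkL$ and any space $Y$, there is a natural equivalence
\[
\Maps_{\PrkL}(\cC,\Fun(Y,\Mod(\bk)))\simeq \Maps_{\Spc}(Y,\Maps_{\PrkL}(\cC,\Mod(\bk))),
\]
because $\Fun(Y,\Mod(\bk))=\Mod(\bk)^{Y}$ is the $\Spc$-cotensor of $\Mod(\bk)$ by $Y$. Define the presheaf of spaces
\[
\widetilde{\fG}\colon \Open(X)^{op}\longrightarrow \Spc,\qquad \widetilde{\fG}(U)=\Maps_{\PrkL}(\fG(U),\Mod(\bk)).
\]
Because $\fG$ is a cosheaf (i.e.\ $\fG(U)\simeq \colim_{\PrkL}\fG(\cU_\bullet)$ for every hypercover) and $\Maps_{\PrkL}(-,\Mod(\bk))$ sends colimits in $\PrkL$ to limits in $\Spc$, the presheaf $\widetilde{\fG}$ is a sheaf of spaces.

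Next, expand the left-hand side of the lemma as an end over $\Open(X)$ and apply the adjunction termwise:
\[
\Maps_{\PcoShv(X;\PrkL)}(\fG,\cLoc_{\cF^{pre}})=\int_{U}\Maps_{\PrkL}(\fG(U),\Fun(\cF^{pre}(U),\Mod(\bk)))
\]
\[
\simeq \int_{U}\Maps_{\Spc}(\cF^{pre}(U),\widetilde{\fG}(U))=\Maps_{\PShv(X;\Spc)}(\cF^{pre},\widetilde{\fG}).
\]
The identical computation with $\cF$ in place of $\cF^{pre}$ yields $\Maps_{\PcoShv(X;\PrkL)}(\fG,\cLoc_{\cF})\simeq \Maps_{\PShv(X;\Spc)}(\cF,\widetilde{\fG})$. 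Because $\widetilde{\fG}$ is a sheaf and $\cF$ is the sheafification of $\cF^{pre}$, the sheafification/inclusion adjunction gives
\[
\Maps_{\PShv(X;\Spc)}(\cF^{pre},\widetilde{\fG})\simeq \Maps_{\Shv(X;\Spc)}(\cF,\widetilde{\fG})\simeq \Maps_{\PShv(X;\Spc)}(\cF,\widetilde{\fG}),
\]
and chaining these equivalences yields the claim.

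The main technical subtlety is ensuring that the cotensor adjunction in step one is natural in all variables at the $(\infty,1)$-categorical level, so that it commutes with the ends computing the mapping spaces; this is standard once one recognizes $\Fun(Y,\Mod(\bk))$ as the $\Spc$-cotensor in $\PrkL$, noting that $\PrkL$ is tensored and cotensored over $\Spc$ via $\cC\otimes Y=\Fun(Y,\cC)^{\mathrm{op}}$-style constructions. The rest of the argument is formal manipulation of universal properties.
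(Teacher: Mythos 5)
Your proof is correct and takes essentially the same approach as the paper: the cotensor adjunction $\Maps_{\PrkL}(\cC,\Fun(Y,\Mod(\bk)))\simeq \Maps_{\Spc}(Y,\Maps_{\PrkL}(\cC,\Mod(\bk)))$ is exactly the adjunction the paper uses (your $\widetilde{\fG}$ coincides with the paper's $\Fun^L_\bk(\fG(-),\Mod(\bk))^{\Spc}$), followed by the same appeal to the sheafness of $\widetilde{\fG}$ and the universal property of sheafification. The only difference is that you spell out the end computation explicitly while the paper keeps it implicit; note the parenthetical aside about $\cC\otimes Y=\Fun(Y,\cC)^{\mathrm{op}}$ has a spurious $\mathrm{op}$, but this does not affect the argument since you only use the cotensor $\Fun(Y,\Mod(\bk))$, which is correct.
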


\begin{proof}
We have the following isomorphism of spaces
\begin{align*}
&\Maps_{\PcoShv(X;\PrkL)}(\fG, \cLoc_{\cF^{pre}})\simeq \Maps_{\PShv(X;\Spc)}(\cF^{pre}, \Fun^L_\bk(\fG(-), \Mod(\bk))^{\Spc})\\
\simeq &\Maps_{\Shv(X;\Spc)}(\cF, \Fun^L_\bk(\fG(-), \Mod(\bk))^{\Spc})\simeq \Maps_{\coShv(X;\PrkL)}(\fG, \cLoc_{\cF}),
\end{align*}
where $\PShv(X;\Spc)$ is the $\infty$-category of presheaves of spaces on $X$, and for any $\infty$-category $\cC$, $\cC^{\Spc}$ is the space obtained from removing all non-invertible morphisms in $\cC$ (equivalently, the image of $\cC$ under the right adjoint of the full embedding $\Spc\hookrightarrow \OneCat$). 

\end{proof}

\begin{lemma}\label{lemma: cofinal D(x)}
For any $x\in L$, let $\fC(x)$ be the full subcategory of $\QHam_L(U/O)$ consisting of $(\cU, \cV, Q_\flat, G^\bullet, M^{\bullet,\dagg}), x\in \cV$, and let $\fD(x)$ be the full subcategory of $\fC(x)$, whose objects satisfy $\varphi_{Q_\flat}(\FT(\ell_x))$ has no negative spectral part. Then 
\begin{itemize}
\item[(a)] $\fD(x)$ is a filtered poset;
\item[(b)] the inclusion $\fD(x)\hookrightarrow \fC(x)$ is cofinal. 
\end{itemize}
\end{lemma}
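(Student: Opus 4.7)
\textbf{Proof plan for Lemma \ref{lemma: cofinal D(x)}.}

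My plan is to reduce the lemma to the existence results of Lemma \ref{lemma: QHam objects} and Lemma \ref{lemma: negative Q_flat dominates}, by verifying first that mapping spaces in $\fD(x)$ are essentially discrete, and then that any finite diagram admits a common upper bound of the desired form.

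\emph{Step 1 (contractibility of mapping spaces).} Given two objects $\cY_i=(\cU_i,\cV_i,Q_\flat^{(i)},G_i^\bullet,M_i^{\bullet,\dagg})\in \fD(x)$, the morphism simplicial set in $\QHam_L(U/O)$ is (the singular complex of) the space of $Q_{12}\in G_2$ satisfying the conditions of Definition \ref{def: QHam(U/O)}(ii). The condition $\widetilde{Q}=Q_\flat^{(2)}+Q_{12}-Q_\flat^{(1)}\geq 0$ is a convex cone condition, the containment $G_2\supset G_1\cup(G_1+Q_{12})$ defines a convex subset of $Q_{12}$'s (since $G_2$ is cut out by convex inequalities, as in (\ref{eq: G_{K,epsilon, R, b_ell}})), and on $\fD(x)$ the rank equality (\ref{eq: no rank contribution}) is automatic at $\ell_x$ (both $\varphi_{Q_\flat^{(1)}}(\FT(\ell_x))$ and $\varphi_{Q_\flat^{(2)}+Q_{12}}(\FT(\ell_x))$ have no negative spectrum, by the shear-plus-nonnegative argument used in Lemma \ref{lemma: negative Q_flat dominates}), so it propagates to a small enough $\cU_2$. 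Hence the mapping space is a convex subset of a real vector space, so contractible when nonempty, and $\fD(x)$ is a poset in the $\infty$-categorical sense (i.e.\ its homotopy category is a poset).

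\emph{Step 2 (filtered).} Given finitely many objects $\cY_1,\ldots,\cY_k\in \fD(x)$, I build a common upper bound $\cY$ as follows. Choose a negatively stable $Q_\flat$ that dominates every $Q_\flat^{(i)}$ from below, i.e.\ such that $Q_i':=Q_\flat^{(i)}-Q_\flat\geq 0$ for each $i$; such a $Q_\flat$ exists because there are only finitely many $Q_\flat^{(i)}$ and one can take $Q_\flat$ a sufficiently negative stable quadratic form adjusted to make $\varphi_{Q_\flat}(\FT(\ell_x))$ have no negative spectrum (apply the $Q_{12}$-construction of Lemma \ref{lemma: negative Q_flat dominates} to kill the negative spectral part introduced by the subtraction). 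Pick $\cV\subset\bigcap_i\cV_i$ and $\cU\subset\bigcap_i\cU_i$ small open neighborhoods of $x$ and $\gamma_L(\cV)$, respectively, so that the no-negative-spectrum condition persists on $\cU$. For the commutative algebra data, take $G^\bullet$ of the type $G^\bullet_{K,\epsilon,R,b_{\ell_x}}$ as in (\ref{eq: G_{K,epsilon, R, b_ell}}) with parameters chosen large enough that $G\supset\bigcup_i(G_i+Q_i')$; for each $i$ the element $Q_{i,\mathrm{out}}=Q_i'\in G$ then defines a morphism $\fj_{Q_{i,\mathrm{out}}}:\cY_i\to\cY$ with $\widetilde{Q}_i=0$, and the required containment $G\supset G_i\cup(G_i+Q_{i,\mathrm{out}})$ holds by construction. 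One checks all clauses of Definition \ref{def: QHam(U/O)} by the same analysis used in the proofs of Lemma \ref{lemma: QHam objects} and \ref{lemma: negative Q_flat dominates}.

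\emph{Step 3 (cofinality).} I will apply Joyal's $\infty$-categorical Theorem A: the inclusion $\fD(x)\hookrightarrow \fC(x)$ is cofinal iff for every $\cY\in \fC(x)$ the slice $\cY/\fD(x)=\fD(x)\times_{\fC(x)}\fC(x)_{\cY/}$ is weakly contractible. Lemma \ref{lemma: negative Q_flat dominates} immediately produces an object of this slice (hence nonemptyness). To see contractibility, observe that the slice inherits a filtered poset structure from $\fD(x)$: given two objects $(\cY\to\cY'_a)$ and $(\cY\to\cY'_b)$ of the slice, Step 2 applied to $\cY'_a$ and $\cY'_b$ in $\fD(x)$ gives a common upper bound $\cY'\in\fD(x)$; that the two composite morphisms $\cY\to\cY'_a\to\cY'$ and $\cY\to\cY'_b\to\cY'$ agree up to the (contractible) mapping space follows from Step 1 applied to $\cY\to\cY'$ in $\fC(x)$, which has convex, hence contractible, $Q_{12}$-space. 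A filtered poset is weakly contractible, so cofinality follows.

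The main technical obstacle is Step 2: one must choose $Q_\flat$, $G$, and the parameters $(K,\epsilon,R,b_{\ell_x})$ simultaneously so that the resulting object lies in $\fD(x)$ and all the containment conditions for the $k$ morphisms $\cY_i\to\cY$ hold at once. This is a finite adjustment argument in the style of Lemma \ref{lemma: QHam objects}, but it requires patience with the rank equalities and the support conditions on $G$.
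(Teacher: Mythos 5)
Your Step 2 has the direction of domination reversed, and this is fatal. The key observation you miss in Step 1 is that for $\cY_1,\cY_2\in\fD(x)$, any morphism $\fj_{Q_{12}}:\cY_1\to\cY_2$ is forced to have $Q_{12}=0$: indeed $Q_{12}\in G_{(2)}\subset G_{Q_\flat^{(2)},\ell_x}$ gives $\rank(\Spectr^-(\varphi_{Q_\flat^{(2)}+Q_{12}}(\FT(\ell_x))))=\rank(Q_{12})$ (since $\varphi_{Q_\flat^{(2)}}(\FT(\ell_x))$ has no negative spectrum), while (\ref{eq: no rank contribution}) forces $\dim\Spectr^-(\varphi_{Q_\flat^{(2)}+Q_{12}}(\FT(\ell_x)))=\dim\Spectr^-(\varphi_{Q_\flat^{(1)}}(\FT(\ell_x)))=0$, hence $Q_{12}=0$. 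This is the real reason $\fD(x)$ is a poset (your convexity heuristic is neither established---the cone and rank conditions in (\ref{eq: G_{K,epsilon, R, b_ell}}) are not convex---nor needed), and it also shows that a morphism $\cY_1\to\cY_2$ in $\fD(x)$ forces $Q_\flat^{(2)}\ge Q_\flat^{(1)}$. So a common upper bound must have the \emph{larger} $Q_\flat$.

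Your construction takes $Q_\flat\le Q_\flat^{(i)}$ and proposes morphisms $\fj_{Q_{i,\mathrm{out}}}:\cY_i\to\cY$ with $Q_{i,\mathrm{out}}=Q_i'=Q_\flat^{(i)}-Q_\flat$. But $Q_{i,\mathrm{out}}\in G\subset G_{Q_\flat,\ell_x}$ and $\cY,\cY_i\in\fD(x)$ force exactly $\rank(Q_i')=\rank(\Spectr^-(\varphi_{Q_i'+Q_\flat}(\FT(\ell_x))))=\rank(\Spectr^-(\varphi_{Q_\flat^{(i)}}(\FT(\ell_x))))=0$, i.e.\ $Q_i'=0$ and $Q_\flat^{(i)}=Q_\flat$ for all $i$; the morphisms you want do not exist unless all the $Q_\flat^{(i)}$ already coincide. (In fact you cannot even arrange $G\supset G_i+Q_i'$ with $Q_i'\ne 0$, since $0\in G_i$ forces $Q_i'\in G$, a contradiction.) The paper's construction correctly builds a dominating $\widehat{Q}_\flat=Q_\flat'+Q_\flat''\ge Q_\flat^{(i)}$ (a large positive form on $b_{\ell_x}^\perp$ plus a form on $b_{\ell_x}$ exceeding the restrictions of $Q_\flat^{(i)}$), then takes $Q_{12}=0$ in the morphisms $\cY_i\to\widehat{\cY}$.

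Step 3 has a related problem: the slice $\cY/\fD(x)$ for $\cY\in\fC(x)\setminus\fD(x)$ is a left fibration over the filtered poset $\fD(x)$ whose fibers are $\Maps_{\fC(x)}(\cY,\cY')$, and those fibers are genuinely non-contractible spaces (this is where the real content is). Your claim that "$\Maps_{\fC(x)}(\cY,\cY')$ has convex, hence contractible, $Q_{12}$-space" is unsupported. The paper instead picks an explicit cofinal sequence in $\fD(x)$ and shows the colimit of the mapping spaces equals the set in (\ref{eq: all Q_12}), a trivial fibration over a space $Z_{V_1}$ whose contractibility is precisely guaranteed by the condition ic) built into Definition \ref{def: QHam(U/O)}. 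That use of ic) is the crux of part (b), and your argument never engages with it.
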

\begin{proof}
(a). The partial ordering $(\cU_1, \cV_1, Q_\flat^{(1)}, G_{(1)}^\bullet, M_{(1)}^{\bullet,\dagg})\prec(\cU_2, \cV_2, Q_\flat^{(2)}, G_{(2)}^\bullet, M_{(2)}^{\bullet,\dagg})$ in $\fD(x)$ is given by $\cU_1\supset \cU_2, \cV_1\supset \cV_2$, $Q_\flat^{(2)}-Q_\flat^{(1)}$ is nonnegative (recall that the difference would contribute trivially to any rank equality), and $G_{(1)}^\bullet\subset G_{(2)}^{\bullet}$. Given any two objects  $(\cU_i, \cV_i, Q_\flat^{(i)}, G_{(i)}^\bullet, M_{(i)}^{\bullet,\dagg}), i=1,2$ in $\fD(x)$, let $Q'_\flat$ be a (negatively stabilized) quadratic form supported on (the stabilization of) $b_{\ell_x}$ such that $Q'_\flat-Q_\flat^{(i)}|_{b_{\ell_x}}$ are strictly positive for $i=1,2$, and have eigenvalues bounded below by some $\epsilon>0$. Let $Q_\flat''$ be a positive quadratic form supported on $b_{\ell_x}^{\perp}$ such that its eigenvalues are greater than some $R\gg 0$. 

Let $\widehat{Q}_\flat=Q_\flat'+Q_\flat''$.  We claim that for $R$ sufficiently large, $\widehat{Q}_\flat\geq Q_\flat^{(i)}, i=1,2$. Indeed, since $Q_\flat^{(i)}, i=1,2$ both have bounded eigenvalues, for any $u\in b_{\ell_x}$ and $v\in b_{\ell_x}^{\perp}$, we have
\begin{align}\label{eq: difference on u,v}
\nonumber (\widehat{Q}_\flat-Q_\flat^{(i)})(u+v)=&(Q_\flat'(u)-Q_\flat^{(i)}(u))+(Q_\flat''(v)-Q_\flat^{(i)}(v))-2Q_\flat^{(i)}(u,v)\\
\geq &(Q_\flat'(u)-Q_\flat^{(i)}(u))+(Q_\flat''(v)-Q_\flat^{(i)}(v))-2K|u||v|
\end{align}
for some $K>0$ depending only on $Q_\flat^{(i)}, i=1,2$. Thus for $R$ sufficiently large, (\ref{eq: difference on u,v}) is always nonnegative. 
Now we fix such $Q_\flat'$ and $Q_\flat''$ as above such that $\varphi_{Q_\flat'}(\FT(\ell_x))$ is of graph type and has no negative spectral part. For sufficiently large $K, R'> 0$ and sufficiently small $\epsilon'>0$, we can choose $ (\widehat{\cU},\widehat{\cV}, \widehat{Q}_\flat, G_{K, \epsilon', R', b_{\ell_x}}^\bullet, M_{K, \epsilon', R', b_{\ell_x}}^{\bullet,\dagg})$ as defined in Lemma \ref{lemma: QHam objects}, such that it dominates both 
$(\cU_i, \cV_i, Q_\flat^{(i)}, G_i^\bullet, M_i^{\bullet,\dagg}), i=1,2$.

(b). We show that $\fD(x)\hookrightarrow \fC(x)$ is cofinal, which means that for any $(\cU, \cV, Q_\flat, G^\bullet, M^{\bullet,\dagg})\in \fC(x)$, the $\infty$-category $\fD(x)\underset{\fC(x)}{\times}\fC(x)_{(\cU, \cV, Q_\flat, G^\bullet, M^{\bullet,\dagg})/}$ is contractible. Since $\fD(x)\underset{\fC(x)}{\times}\fC(x)_{(\cU, \cV, Q_\flat, G^\bullet, M^{\bullet,\dagg})/}$ is a left fibration over $\fD(x)$ and $\fD(x)$ is a filtered poset, the geometric realization $|\fD(x)\underset{\fC(x)}{\times}\fC(x)_{(\cU, \cV, Q_\flat, G^\bullet, M^{\bullet,\dagg})/}|$ can be calculated by the (homotopy) colimit
\begin{align*}
\varinjlim\limits_{j\rightarrow\infty}\Maps_{\QHam_L(U/O)}((\cU, \cV, Q_\flat, G^\bullet, M^{\bullet,\dagg}),  (\cU_j, \cV_j, Q_\flat^{(j)}, G_j^\bullet, M_j^{\bullet,\dagg}))
\end{align*}
over any cofinal sequence $(\cU_j, \cV_j, Q_\flat^{(j)}, G_j^\bullet, M_j^{\bullet,\dagg})$ in $\fD(x)$ if it exists (we will see the existence below). Since the morphisms in the above sequence are all inclusions, the colimit is taking infinite unions.

Without loss of generality (up to congruent relations), we may assume that $\FT(\varphi_{Q_\flat}(\FT(\ell_x)))$ has spectral decomposition concentrated in $\pm 1$ and $\infty$, and let $V_1$, $V_{-1}$ and $V_\infty$ be the corresponding eigenspaces (note that $V_{-1}$ is the stabilized part).  First, Lemma \ref{lemma: negative Q_flat dominates}
 says that the fiber product $\fD(x)\underset{\fC(x)}{\times}\fC(x)_{(\cU, \cV, Q_\flat, G^\bullet, M^{\bullet,\dagg})/}$ is nonempty. 
Second, let 
\begin{align}
\label{eq: Q hat R}&Q_{\flat,R}'=Q_\flat|_{b_{\ell_x}}-(1/2+1/R)I_{V_{1}}+(1/2-1/R)I_{V_{-1}},\ Q_{\flat,R}''=Q_\flat|_{b_{\ell_x}^\perp}+R\cdot I_{b_{\ell_x}^\perp}\\
\nonumber&\widehat{Q}_{\flat,R}=Q_{\flat,R}'+Q_{\flat,R}'',\ R>0
\end{align}
then $Q_\flat-\widehat{Q}_{\flat,R}$ is of the form 
\begin{center}
\begin{tikzpicture}
\draw (0,0)--(7,0)--(7,7)--(0,7)--(0,0); 
\draw (0,5)--(7,5);
\draw (2,0)--(2,7);
\draw (0,2)--(7,2);
\draw (5,0)--(5,7);
\draw[decorate, decoration={brace,amplitude=10pt}, xshift=-4pt, yshift=0pt] (0,5)--(0,7) node [black, midway, xshift=-0.6cm] {\footnotesize$V_{1}$}; 
\draw[decorate, decoration={brace,amplitude=10pt,  mirror, raise=4pt},  yshift=0pt] (7,2)--(7,5) node [black, midway, xshift=0.8cm] {\footnotesize$V_{-1}$}; 
\draw[decorate, decoration={brace,amplitude=10pt,  mirror, raise=4pt},  yshift=0pt] (7,0)--(7,2) node [black, midway, xshift=0.8cm] {\footnotesize$b_{\ell_x}^\perp$}; 
\draw (1,6) node {$\substack{(1/2+1/R)I_{V_{1}}}$}; 
\draw (3.5, 3.5) node {$(-\frac{1}{2}+\frac{1}{R})I_{V_{-1}}$};
\draw (6,1) node {$-R\cdot I_{b_{\ell_x}^\perp}$};
\draw (1, 3.5) node {$0$};
\draw (3.5, 6) node {$0$};
\draw (1, 1) node{$Y_{13}^T$};
\draw (3.5, 1) node {$Y_{23}^T$};
\draw (6, 3.5) node {$Y_{23}$};
\draw (6, 6) node {$Y_{13}$};
\end{tikzpicture}
\end{center}
where $Y_{13}$ and $Y_{23}$ are independent of $R$ and has finite norm. Let 
\begin{align}\label{eq: cofinal sequence}
(\cU_n, \cV_n, \widehat{Q}_{\flat,n}, G_{(n)}^\bullet, M_{(n)}^{\bullet, \dagg}), n\in\bZ_{\geq0}
\end{align}
 be a sequence of objects in $\fD(x)$, where $ \widehat{Q}_{\flat,n}$ is defined as in (\ref{eq: Q hat R}) with $R=n$; $G_{(n)}=G_{K_n, \epsilon_n, nR_n, b_{\ell_x}}^\bullet$, 
which has been defined in the proof of Lemma \ref{lemma: QHam objects} (\ref{eq: G_{K,epsilon, R, b_ell}}) with $K_n\uparrow+\infty, \epsilon_n\downarrow 0^+, R_n\uparrow +\infty$; $\cU_n$ and $\cV_n$ are decreasing and $\bigcap\limits_n\cU_n=\bigcap\limits_n\cV_n=\{\ell_x\}$. Then we have the following observations
 \begin{itemize}
 \item[(1)] 
The sequence  (\ref{eq: cofinal sequence})  is  cofinal  in $\fD(x)$;

 \item[(2)] For any $\epsilon>0$, consider the space of quadratic forms $Q_{12}$ with $Q_{12}|_{V_{1}}> (1/2+\epsilon) I_{V_{1}}$ of rank $\dim V_{1}$ (so automatically positive) satisfying 
 \begin{itemize}
 \item[2a)] the support intersects trivially with that of every $Q_1\in G$; 
\item[ 2b)] The eigenvalues of $Q_{12}$ are less than some $R'\gg 0$. 
\end{itemize}
Then it satisfies that $Q_{12}\geq Q_\flat-\widehat{Q}_{\flat,n}$ and $G,G+Q_{12}\subset G_{(n)}$  for $n$ large enough, and this space embeds into the mapping space from $(\cU, \cV, Q_\flat, G^\bullet, M^{\bullet, \dagg})$ to $(\cU_n, \cV_n, \widehat{Q}_{\flat,n}, G_{(n)}^\bullet, M_{(n)}^{\bullet, \dagg}))$ as an open subset for $n\gg 0$. 
 
\item[(3)]  
\begin{align}\label{eq: all Q_12}
\nonumber&\bigcup\limits_{n=1}^{+\infty}\Maps_{\QHam_L(U/O)}((\cU, \cV, Q_\flat, G^\bullet, M^{\bullet,\dagg}),  (\cU_n, \cV_n, \widehat{Q}_{\flat,n}, G_{(n)}^\bullet, M_{(n)}^{\bullet,\dagg}))\\
=&\{Q_{12}: Q_{12}|_{V_{1}}>\frac{1}{2}I_{V_{1}}, \ \rank(Q_{12})=\dim V_{1},\ \Supp(Q_{12})\cap \Supp(Q_1)=\{0\}, \forall Q_1\in G\}. 
\end{align}
\end{itemize}

By assumption ic) on $(\cU, Q_\flat, G^\bullet, M^{\bullet, \dagg})$, the sublocus $Z_{V_{1}}$ of $\Hom(V_{1}, V_{-1}\oplus V_\infty)$ as an open subset in $\Gr(\dim {V_{1}},\infty)$ consisting of $E$ with $E\cap \Supp(Q_1)=\{0\}, \forall Q_1\in G$,  is contractible. On the other hand, the space (\ref{eq: all Q_12}) is a trivial fibration over $Z_{V_{1}}$, therefore it is also contractible. 
\end{proof}

Recall that we have the universal principal $\Omega(U/O)$-bundle
\begin{align*}
\xymatrix{\bZ\times BO\simeq\Omega(U/O)\ar[r]&E(\bZ\times BO)\simeq pt\ar[d]^{\eta_{U/O}}\\
&B(\bZ\times BO)\simeq U/O
},
\end{align*}
and the Gauss map\footnote{As remarked after Theorem \ref{thm: sec proof}, technically we should write $\vartheta\circ\gamma$ instead, but to simplify the notation, we will omit $\vartheta$.} $\gamma: L\longrightarrow U/O$. A classical realization for $E(\bZ\times BO)$ is $\text{Path}_*(U/O)$, the path space on $U/O$ with ending point at the monoidal unit $*\in U/O$. 

\begin{prop}\label{prop: proof of thm}
\item[(a)] The functor $\LKan_{p_L^{op}}(G,M): \Open(L)^{op}\rightarrow \cMod^{N(\Fin_*)}(\Spc)$ determines a pair of presheaves of spaces $(\Phi^{pre}_\alg, \Phi^{pre}_{\sfmod})\in \cMod^{N(\Fin_*)}(\cP\Shv(L;\Spc))$, whose  sheafification $(\Phi_{\alg}, \Phi_{\sfmod})$ has its group completion isomorphic to the pair $((\bZ\times BO)_L, \gamma^{-1}(\eta_{U/O}))$, where $(\bZ\times BO)_L$ is the constant sheaf of commutative topological monoid $\bZ\times BO$.\\

\item[(b)] 
Let $\fG_L$ denote for the locally constant sheaf of  (stable $\infty$-)categories on $L$ defined by
\begin{align*}
&\Open(L)^{op}\longrightarrow \PrstL\\
&\cV\mapsto \Loc(\gamma^{-1}(\eta_{U/O})|_{\cV};\Sp)^{J\text{-equiv}},
\end{align*} 
where $\gamma^{-1}(\eta_{U/O})|_{\cV}$ means the total space as a module of $\bZ\times BO$. There is a canonical equivalence of locally constant sheaf of categories $\mu\cShv_L\simeq \fG_L$.

\end{prop}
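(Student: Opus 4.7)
The plan is to leverage the functor $F_L$ constructed in Section \ref{subsec: F_L} together with the cofinality result of Lemma \ref{lemma: cofinal D(x)}, and to convert everything to sheaves via Lemma \ref{lemma: Loc_F^pre}. For part (a), I would first unpack the definition of the left Kan extension: for any $\cV\subset L$, $\LKan_{p_L^{op}}(G,M)(\cV)$ is the colimit of $(G,M)$ over the full subcategory of $\QHam_L(U/O)^{op}$ lying above $\cV$, i.e. objects $(\cU,\cV',Q_\flat,G^\bullet,M^{\bullet,\dagg})$ with $\cV\subset \cV'$. Since $(G^\bullet,M^{\bullet,\dagg})$ is a pair of a commutative algebra and its module object in $\Spc$ (via $N(\Fin_*)$-objects), these colimits inherit such a structure, giving $(\Phi^{pre}_\alg,\Phi^{pre}_\sfmod)\in \cMod^{N(\Fin_*)}(\cP\Shv(L;\Spc))$. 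Denote by $(\Phi_\alg,\Phi_\sfmod)$ their sheafifications.

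To compute the stalk at $x\in L$, I would pass to the filtered colimit over $\fC(x)$, and by Lemma \ref{lemma: cofinal D(x)}(b) replace this by the colimit over the filtered poset $\fD(x)$. For a cofinal chain as in (\ref{eq: cofinal sequence}) with $\widehat{Q}_{\flat,n}$ and $G_{(n)} = G_{K_n,\epsilon_n,nR_n,b_{\ell_x}}^\bullet$, one checks that $G_{(n)}$ is the ``thickened graph locus'' in $G_{Q_\flat,\ell_x}$ whose group completion is the classifying monoid $\coprod_n BO(n)$ from the $\Fin_*$-description of $U/O$ in Section \ref{subsec: Bott}; in the limit, the group completion of $\Phi_\alg$ at $x$ is $\bZ\times BO \simeq \Omega(U/O)$, confirming that the sheafified and group completed $\Phi_\alg$ is the constant sheaf $(\bZ\times BO)_L$. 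For the module side, the data $Q_\flat^{(n)}$ traces out the concatenation of the Fourier transform path $\FT_t(\ell_x)$ with the Hamiltonian flow path $\varphi^t_{\widehat{Q}_{\flat,n}}\circ \FT(\ell_x)$, thereby selecting a point of $\gamma^{-1}(\eta_{U/O})_x = \mathrm{Path}_*(U/O;\gamma(x))$ up to contractible choice; monoidal extension through the free $G^\bullet$-module structure on $M^{\bullet,\dagg}$ identifies the group completed stalk with the full fiber of the torsor. Globalizing via naturality and the fact that the Gauss map $\gamma: L\rightarrow U/O$ is well-defined on all of $L$ gives the claimed identification.

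For part (b), the functor $F_L:\QHam_L(U/O)^{op}\rightarrow \Fun(\Delta^1,\PrstL)$ yields, after right Kan extension along $p_L$, a morphism in $\Fun(\Delta^1,\PrstL)$ over $\Open(L)$ whose source (via $ev_0$) is $\mu\cShv_L(-;\Sp)$ and whose target (via $ev_1$) is the functor $\cV\mapsto \varprojlim\limits_{\cY\in \fC_\cV}\Loc(\cV\times M;\Sp)^{J\text{-equiv}}$. By Proposition \ref{prop: muShv, J-equiv}, $F_L(\cY)$ is an equivalence for every $\cY$, so the induced natural transformation is a levelwise (hence global) equivalence. Now I would apply Lemma \ref{lemma: Loc_F^pre} with $\cF^{pre}=\Phi_\sfmod^{pre}$: since the right-hand side is the pre-cosheaf $\cLoc_{\Phi_\sfmod^{pre}}^{J\text{-equiv}}$, its cosheafification is $\cLoc_{\Phi_\sfmod}^{J\text{-equiv}}$, where the $J$-equivariance is carried along functorially through the commutative algebra structure on $\pi_*\omega_{VG}\in\CAlg(\Loc(G;\Sp)^\otimes)$. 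By part (a), $\Phi_\sfmod$ after group completion is the pullback $\gamma^{-1}(\eta_{U/O})$, and since $\Loc(-;\Sp)^{J\text{-equiv}}$ is invariant under group completion of the parametrizing monoid action, we obtain $\cLoc_{\Phi_\sfmod}^{J\text{-equiv}}\simeq \fG_L$. Combining these identifications yields the canonical equivalence $\mu\cShv_L\simeq \fG_L$.

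The main obstacle, and where I expect most of the care to be needed, is tracking how the $J$-equivariance structure (encoded by $\pi_*\omega_{VG}$ as a commutative algebra object, Section \ref{subsec: F_L}) is preserved through the filtered colimit/sheafification process and how it interacts with the group completion identifying $\Phi_\alg$ with $(\bZ\times BO)_L$; heuristically the $J$-homomorphism $\coprod_n BO(n)\rightarrow \Pic(\bS)$ extends to its group completion $\bZ\times BO\rightarrow \Pic(\bS)$, and the functoriality of $\Loc(-;\Sp)^{J\text{-equiv}}$ along this extension is what enables the identification, but verifying this at the level of the symmetric monoidal $(\infty,1)$-category is the principal technical step.
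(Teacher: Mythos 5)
Your overall architecture matches the paper's: you correctly identify Lemma \ref{lemma: cofinal D(x)}, the functor $F_L$, Lemma \ref{lemma: Loc_F^pre}, and the use of part (a) as input to part (b). For part (a), your stalk computation is essentially the paper's argument, but you skip the step of actually constructing the morphism $\tau:(\Phi_\alg^{pre},\Phi_\sfmod^{pre})\to((\bZ\times BO)_L,\gamma^{-1}(\eta_{U/O}))$ before checking it is an isomorphism on stalks. The paper does this by first building a map $\widetilde\tau$ on the presheaf over $\QHam_L(U/O)^{op}$, whose well-definedness relies on a contractible space of path-homotopies between the concatenations $\varphi^t_{Q^{(1)}_\flat}\bullet\FT_t(\ell_x)$ and $\varphi^t_{Q_{12}}\bullet\varphi^t_{Q_\flat^{(2)}}\bullet\FT_t(\ell_x)$, constrained by the no-rank-contribution condition (\ref{eq: no rank contribution}). ``Globalizing via naturality'' glosses over this; you need the map to exist coherently at the level of presheaves before you can sheafify, group-complete and then test stalks.

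For part (b) there is a genuine gap. You argue that $\RKan_{p_L}(F_L)$ is a levelwise equivalence (true) and then attempt to identify the target $\RKan_{p_L}(\Loc(-;\Sp)^{J\text{-equiv}})$ with $\fG_L$ via Lemma \ref{lemma: Loc_F^pre} together with the assertion that ``$\Loc(-;\Sp)^{J\text{-equiv}}$ is invariant under group completion of the parametrizing monoid action.'' That assertion is precisely the hard point and is not established anywhere in the paper; it is not obviously true, because the $J$-equivariance is defined via the (non-group-complete) algebra object $\pi_*\omega_{VG}\in\CAlg(\Loc(G;\Sp)^{\otimes})$, and passing to the group-completed torsor $\gamma^{-1}(\eta_{U/O})$ changes both the acting monoid and the module. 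The paper sidesteps this entirely by a different maneuver: it produces a functor of cosheaves $\fG_L\to(p_L)_*p_L^{-1}(\mu\cShv_L)\otimes\cLoc_{(p_L)_!pt}$, then tensors with the dual local system of categories $\mu\cShv_L^\vee$ (which exists because $\mu\cShv_L$ is locally constant with fiber $\Sp$), obtaining $\fG_L\otimes\mu\cShv_L^\vee\to\cLoc_{(p_L)_!pt}$. Since the stalks of $(p_L)_!pt_{\QHam_L(U/O)}$ are contractible, the right-hand side is just $\cLoc_L$, and one wins by comparing costalks of locally constant cosheaves. Your route would require a justification of the group-completion invariance claim, which you do not provide and which the paper never needs. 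You should replace that step by the dualization/tensoring argument, or else supply a genuine proof of the invariance claim.
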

\begin{proof}
\item[(a)] First, let $\cF^{pre}_{(G,M)}$ denote for the presheaf on $\QHam_L(U/O)$ defined by the functor $(G,M)$ in (\ref{diagram: main Kan}). For each object $(\cU,\cV,Q_\flat,G^\bullet, M^{\bullet,\dagg})\in \QHam_L(U/O)$, the pair $(G,M)$ specifies a commutative submonoid of $\Omega(U/O)$ acting on a space of paths in $U/O$ starting from any point in $\cU$ to the base point, i.e. a space of based loops acting on a space of sections of the fibration $\text{Path}_*(U/O)\rightarrow U/O$ over $\cU$.  This induces a map of presheaves on $\QHam_L(U/O)^{op}$
\begin{align}\label{eq: tilde tau}
\widetilde{\tau}: \cF_{(G,M)}^{pre}\longrightarrow (p_L^{op})^{-1}((\bZ\times BO)_L, \gamma^{-1}(\eta_{U/O})).
\end{align}
More explicitly, for each morphism 
\begin{align*}
\frj_{Q_{12}}: (\cU_1,\cV_1,Q^{(1)}_\flat,G^\bullet_{(1)}, M^{\bullet,\dagg}_{(1)})\rightarrow (\cU_2,\cV_2,Q^{(2)}_\flat,G^\bullet_{(2)}, M^{\bullet,\dagg}_{(2)}),
\end{align*}
in $\QHam_L(U/O)$, the following diagram
\begin{align*}
\xymatrix{(G_{(1)}, M_{(1)})\ar[r]\ar[d]_{\frj_{Q_{12}}}&(\Maps(\cV_1, \bZ\times BO), \Gamma(\cV_1,  \gamma^{-1}(\eta_{U/O})))\ar[d]\\
(G_{(2)}, M_{(2)})\ar[r]&(\Maps(\cV_2, \bZ\times BO), \Gamma(\cV_2,  \gamma^{-1}(\eta_{U/O})))
}
\end{align*}
is commutative up to a contractible space of homotopies between concatenated paths $\varphi^t_{Q^{(1)}_\flat}\bullet\FT_t(\ell_x)$ and $\varphi^t_{Q_{12}}\bullet\varphi^t_{Q_\flat^{(2)}}\bullet \FT_t(\ell_x)$ for each $x\in \cV$, relative to their endpoints. We require the space of homotopies happen in the space of time-dependent Hamiltonian flows of nonpositively stabilized quadratic functions in $\bp$, denoted by $\varphi_{Q_t}^t, 0\leq t\leq 1$ (so each gives a path $\varphi^t_{Q_t}\bullet \FT_t(\ell_x)$ with the endpoint graph like), such that the difference $\Delta Q=:\int_0^1Q_tdt-Q_\flat^{(1)}\geq 0$ and it satisfies the equality (\ref{eq: no rank contribution}) replacing $\widetilde{Q}$ (which means it has no contribution to any rank equalities as explained in Definition \ref{def: QHam(U/O)}), hence contractible.  It is clear that the space of homotopies are compatible with composition of morphisms in $\QHam_L(U/O)$. 

Now by the functoriality of left Kan extension along $p_L^{op}$, 
$\widetilde{\tau}$ (\ref{eq: tilde tau}) determines a morphism of presheaves 
\begin{align*}
\tau: (\Phi_{\alg}^{pre}, \Phi_{\sfmod}^{pre})\longrightarrow ((\bZ\times BO)_L,\gamma^{-1}(\eta_{U/O})). 
\end{align*}
To see that $\tau$ induces an isomorphism of pairs after sheafification and group completion, we just need to show that $\tau$ induces such isomorphisms at the stalk level. For each $x\in L$, the stalk of $(\Phi_{\alg}^{pre}, \Phi_{\sfmod}^{pre})$ at $x$ is isomorphic to 
\begin{align*}
\varinjlim\limits_{(\cU, \cV, Q_\flat, G^\bullet, M^{\bullet,\dagg})\in \fD(x)}(G, M)
\end{align*} 
where $\fD(x)$ is the same as in Lemma \ref{lemma: cofinal D(x)}, and it can be calculated by a cofinal sequence in $\fD(x)$ (cf. the proof of Lemma \ref{lemma: cofinal D(x)}). 
It is then easy to see that this is isomorphic to the pair $\coprod\limits_{n}BO(n)$ together with its torsor generated by the concatenated path $\rho_{(Q_\flat,\ell_x)}:=\varphi_{Q_\flat}^t\bullet \FT_t(\ell_x)$, for any $Q_\flat$ satisfying $\varphi_{Q_\flat}(\FT(\ell_x))$ is of graph type and has no negative spectral part (here $\FT_t$ is as in (\ref{eq: FT_t})), and the induced morphism
\begin{align*}
\tau_x: &\varinjlim\limits_{(\cU, \cV, Q_\flat, G^\bullet, M^{\bullet,\dagg})\in \fD(x)}(G,M)\simeq (\coprod\limits_{n}BO(n), \coprod\limits_{n}BO(n)\lng\rho_{(Q_\flat, \ell_x)}\rng)\\
&\longrightarrow (\bZ\times BO, \eta_{U/O}|_{\ell_x})
\end{align*} 
becomes a weak homotopy equivalence after taking group completion on the left-hand-side. \\

\item[(b)]In the following, we view $\fG_L$ as a cosheaf by taking the left adjoint of the restriction functors. 
Let $\fG_L^{pre}$ be the pre-cosheaf of categories on $L$ defined by
\begin{align*}
&\Open(L)\longrightarrow \PrstL\\
&\cV\mapsto \Loc(\Gamma(\cV, \gamma^{-1}(\eta_{U/O}));\Sp)^{J\text{-equiv}}.
\end{align*}
Then it is clear that $\fG_L$ is a cosheafification of $\fG_L^{pre}$, and in particular there is a canonical functor of pre-cosheaves $\fG_L\rightarrow \fG_L^{pre}$. 
Let $\fF^{pre}_L$ denote for the pre-cosheaf on $\QHam_L(U/O)^{op}$ defined by the functor $\Loc(-;\Sp)^{J\text{-equiv}}$ in (\ref{diagram: main Kan}). 
From part (a), we see that there is a canonical functor of pre-cosheaves of categories 
\begin{align*}
\fG_L\longrightarrow \fG_L^{pre} \longrightarrow (p_L)_*\fF^{pre}_L\simeq (\RKan_{p_L}(\Loc(-;\Sp)^{J\text{-equiv}})).
\end{align*}

In the following, we will view $\mu\cShv_L$ as a cosheaf of categories. The functor $F_L$ in (\ref{diagram: main Kan}) determines a canonical equivalence 
\begin{align*}
p_L^{-1}\mu\cShv_L\overset{\sim}{\longrightarrow} \fF_L^{pre}. 
\end{align*}
and therefore we get a functor of pre-cosheaves
\begin{align}\label{eq: G_L, RKan}
\fG_L\longrightarrow (p_L)_*(\fF_L^{pre})\overset{\sim}{\longrightarrow} (p_L)_*p_L^{-1}(\mu\cShv_L)\simeq \RKan_{p_L}(ev_0\circ F_L). 
\end{align}

Let $pt_{\QHam_L(U/O)}$ denote for the constant functor $\QHam_L(U/O)\rightarrow \Spc$ that maps every object to $pt$. The left Kan extension $\LKan_{p_L^{op}}(pt_{\QHam_L(U/O)})$ determines a presheaf of spaces on $L$, denoted by $(p_L)_!(pt_{\QHam_L(U/O)})^{pre}$. Let $(p_L)_!pt_{\QHam_L(U/O)}$ be the sheafification of $((p_L)_!pt_{\QHam_L(U/O)})^{pre}$. Applying Lemma \ref{lemma: Loc_F^pre} and the functor in (\ref{eq: G_L, RKan}), we get a functor of cosheaves: 
\begin{align}\label{eq: muSh tensor}
\fG_L\longrightarrow \mu\cShv_L\otimes \cLoc_{((p_L)_!pt_{\QHam_L(U/O)})}, 
\end{align}
 where $\cLoc_{((p_L)_!pt_{\QHam_L(U/O)})}$ is the cosheaf of categories on $L$ defined as in Lemma \ref{lemma: Loc_F^pre}. Since $\mu\cShv_L$ is locally constant with cofiber equivalent to $\Sp$, we can tensor with the dual 
 local system of categories of $\mu\cShv_L$, denoted by $\mu\cShv_L^{\vee}$ on both sides of (\ref{eq: muSh tensor}), and get a functor of cosheaves
 \begin{align}\label{eq: muSh tensor 2}
\fG_L\otimes \mu\cShv_L^{\vee}\longrightarrow \cLoc_{((p_L)_!pt_{\QHam_L(U/O)})}.
 \end{align}
Since the stalk of $((p_L)_!pt_{\QHam_L(U/O)})^{pre}$ at every $x\in L$ is contractible, $(p_L)_!pt_{\QHam_L(U/O)}$ is isomorphic to the constant sheaf on $L$ with fibers weakly homotopy equivalent to $pt$. Therefore, the right-hand-side in (\ref{eq: muSh tensor 2}) is equivalent to $\cLoc_L$. By comparing the costalks of both sides of (\ref{eq: muSh tensor 2}), as locally constant cosheaves, we see that (\ref{eq: muSh tensor 2}) is an equivalence, and this finishes the proof. 
\end{proof}

\subsection{The case for a general base manifold $X$}\label{subsec: general X}

For a general smooth manifold $X$, there is a standard treatment to reduce it to the $\bR^N$ case. One can choose a smooth embedding  $X\hookrightarrow \bR^N$ for some large $N$ and denote the associated embedding $X\times \bR_t\hookrightarrow \bR^N\times \bR_t$ by $\iota$. For any smooth (immersed) Lagrangian $L$ in $T^*X$, let $\widetilde{L}$ be its image under 
the canonical correspondence
\begin{align}\label{eq: f_d, f_pi}
T^*X\overset{f_d}{\longleftarrow} T^*\bR^N\underset{\bR^N}{\times}X\overset{f_\pi}{\longrightarrow} T^*\bR^N,
\end{align} 
which is a smooth (immersed) Lagrangian in $T^*\bR^N$. There is an obvious fibration 
\begin{align*}
\pi_{\widetilde{L}}: \widetilde{L}\rightarrow L, 
\end{align*}
whose fiber at $(x,\xi)\in L$ is a torsor over the fiber of the conormal bundle of $X$ in $\bR^N$ at $x$.

The  pushforward functor $\iota_*$ on microlocal sheaf categories induces an equivalence of sheaves of categories  
\begin{align*}
\pi_{\widetilde{L}}^{-1}\mu\cShv_L\longrightarrow\mu\cShv_{\widetilde{L}}.
\end{align*} 
So the classifying map for $\mu\cShv_L$ is homotopic to 
\begin{align*}
L\simeq \widetilde{L}\overset{\gamma_{\widetilde{L}}}{\longrightarrow} U/O\longrightarrow B\Pic(\bk). 
\end{align*}
It is not hard to see that the composition of the inverse of the homotopy equivalence $\pi_{\widetilde{L}}$ and $\gamma_{\widetilde{L}}$ is homotopic to the stable Gauss map $L\overset{\gamma_L}{\rightarrow} U/O$. Recall that the stable Gauss map for $L$ is induced from the trivialization of the bundle of stable Lagrangian Grassmannian over $T^*X$ by the section $\sigma_X$ of taking tangent spaces of cotangent fibers. 
Then we have a commutative diagram
\begin{align*}
\xymatrix{(\pi_{\widetilde{L}}^{-1}(\cLagGr_{T^*X}|_L), \pi_{\widetilde{L}}^{-1}(\sigma_X|_L))\ar[rr]^{\ \ \ \ \ \ Tf_\pi\circ(Tf_d)^{-1}}_\sim\ar[dr]&&(\cLagGr_{T^*\bR^N}|_{\widetilde{L}},\sigma_{\bR^N}|_{\widetilde{L}})\ar[dl]\\
&\widetilde{L}&
},
\end{align*}
where the top rows consist of pairs of principal $U/O$-bundles together with a reference section, and the connecting map is doing the Lagrangian correspondence (\ref{eq: f_d, f_pi}) on the tangent space level. Relating the tautological sections (induced from $L$ on the left) determines a homotopy between $\gamma_{\widetilde{L}}$ and $\gamma_L\circ\pi_{\widetilde{L}}$.

\begin{proof}[Proof of Theorem \ref{thm: sec proof}]
The theorem follows directly from Proposition \ref{prop: proof of thm} (b) and the discussion above. We remark that there are three places where the canonical involution $\vartheta$ on $U/O$ are involved, which account for the extra $\vartheta$ in (\ref{thm: sec proof}). 
First, we have taken the opposite Hamiltonian flow $\varphi_{-Q}$ for each quadratic form $Q$ in Section \ref{subsec: F_L}. Second, we have used the presentation of the universal principal $\Omega(U/O)$-bundle on $U/O$ as the space of paths ending at the base point, whose classifying map is $\vartheta\circ\gamma$.  These two $\vartheta$s cancel out. Third, similarly to the definition of the associated $G$-vector bundle for a principal $G$-bundle $\cP\rightarrow B$ and a $G$-representation $V$, $(\cP\times V)/G=\{(s,v)\sim (sg, g^{-1}v)\}$, the appearance of $g^{-1}$ in the second factor accounts for the $\vartheta$ in (\ref{eq: thm sec proof}). 
\end{proof}

\subsection{An application: proof of Corollary \ref{cor: AbKr}}
In this subsection, we give the sheaf-theoretic proof of Corollary \ref{cor: AbKr}, which was obtained by Abouzaid--Kragh \cite{AbKr} using Floer homotopy types. 
\begin{proof}[Proof of Corollary \ref{cor: AbKr}]
First, by the main result of \cite{Guillermou}, generalized over ring spectra by \cite{JiTr}, we have a fully faithful functor 
\begin{align}\label{eq: }
\cQ_L: \Gamma(L,\mu\cShv_{L;\bS})\longrightarrow \Loc(X;\bS).
\end{align}
Here we will apply a slightly modified version of $\cQ_L$ than the original one considered in \cite{Guillermou} and \cite{JiTr} in the following way. Recall that the original $\cQ_L$ is defined by the following steps
\begin{itemize}
\item[(i)] For each section of $\Gamma(L,\mu\cShv_{L;\bS})$, apply convolution with $\bk_{(-\delta,0]}$ for any of its local representatives as sheaves on $U\times (a,b)\subset X\times \bR$ with singular support in the Legendrian lifting $\bL$ of $L$, where $U$ runs over an open covering of $X$ and $\delta>0$ is sufficiently small. The outcome is a sheaf $F$ on $X\times\bR$ with singular support in the doubling $\bL\cup T_{-\delta}\bL$, whose microlocalization along $\bL$ is given by the section of $\Gamma(L,\mu\cShv_{L;\bS})$ that we started with. This gives a fully faithful functor
\begin{align*}
 T_{(-\delta,0]}: \Gamma(L,\mu\cShv_{L;\bS})\longrightarrow \Shv^{<0}_{\bL\cup T_{-\delta}\bL}(X\times\bR;\bS). 
\end{align*}

\item[(ii)] Do a homogeneous Hamiltonian flow on $T^{*,<0}(X\times \bR)$ (whose support projects to compact subset in $X\times\bR$) that shifts $T_{-\delta}\bL$ to $T_{-R}\bL$, $R\gg 0$ and is the identity near $\bL$. For $R$ sufficiently large, we have the front projection of $T_{-R}\bL$ is completely disjoint from that of $\bL$. 

\item[(iii)] By the main result of \cite{GKS}, the Hamiltonian flow gives an equivalence of categories
\begin{align}\label{eq: proof delta, R}
\Shv^{<0}_{\bL\cup T_{-\delta}\bL}(X\times\bR;\bS)\overset{\sim}{\longrightarrow} \Shv^{<0}_{\bL\cup T_{-R}\bL}(X\times\bR;\bS).
\end{align}
For $R\gg 0$, the restriction functor for $i_{(-R/2)}: X\times \{-R/2\}\hookrightarrow X\times\bR$ gives
\begin{align*}
i_{(-R/2)}^*: \Shv^{<0}_{\bL\cup T_{-R}\bL}(X\times\bR;\bS)\longrightarrow \Loc(X;\bS).
\end{align*}
Now compose $T_{(-\delta,0]}$, the equivalence (\ref{eq: proof delta, R}) and $i_{(-R/2)}^*$, and this defines $\cQ_L$. 
\end{itemize}

The changes that we will make to the above are as follows
\begin{itemize}
\item[(i)] We will use $\overset{!}{*}\omega_{[0,\delta)}$ instead of $*\bk_{(-\delta,0]}$, as introduced in Subsection \ref{subsubsec: second corr}. Then we get 
\begin{align*}
 T_{[0,\delta)}: \Gamma(L,\mu\cShv_{L;\bS})\longrightarrow \Shv^{<0}_{\bL\cup T_{\delta}\bL}(X\times\bR;\bS). 
\end{align*}
instead of $T_{(-\delta,0]}$.

\item[(ii)] Just change $T_{-\delta}\bL$ (resp. $T_{-R}\bL$) to $T_\delta\bL$ (resp. $T_{R}\bL$). 

\item[(iii)] The change for (\ref{eq: proof delta, R}) is the obvious one, and we change $i_{(-R/2)}^*$ to $i_{R/2}^!$. 
\end{itemize}
After these modifications, $\cQ_L$ remains to be fully faithful. 

Second, \cite{Guillermou} gives a microlocal sheaf-theoretic proof that the projection $\pi_L: L\rightarrow X$ is a homotopy equivalence. Fix a homotopy inverse of $\pi_L$ and denote it by $\psi_X$. 
Similarly to the consideration in \cite[Section 25]{Guillermou}, we can twist $\mu\cShv_{L;\bS}$ by its dual local system of categories, denoted by $\mu\cShv_{L;\bS}^\vee$, and get the functor 
\begin{align*}
\cQ^{tw}_L: \Loc(L;\bS)\simeq \Gamma(L,\mu\cShv_{L;\bS}\otimes \mu\cShv_{L;\bS}^\vee)\longrightarrow \Gamma(X, \psi_L^{-1}\mu\cShv_{L;\bS}^\vee)
\end{align*}
which is also fully faithful. Then to show that $\mu\cShv_{L;\bS}$ is trivial, we just need to show that for the dualizing sheaf $\omega_L\in \Loc(L;\bS)$, $\cN=:\cQ_L^{tw}(\omega_L)$ is locally (restricted to a contractible open) a compact object with endomorphism algebra isomorphic to $\bS$. The property of compactness is obvious. 

Note that $\cQ_L^{tw}$ is compatible with tensoring with local systems (i.e. a functor between modules over $\Loc(X;\bS)^{\otimes,!}$), namely for any $\cL\in \Loc(X;\bS)$ and any object $M\in \Loc(L;\bS)$, we have
\begin{align*}
&\cQ_L^{tw}(M\overset{!}{\otimes} \pi_L^!\cL)\simeq \cQ_L^{tw}(M)\overset{!}{\otimes} \cL.
\end{align*}
Let $\eta_X:\Path_*X\rightarrow X$ be the universal principal $\Omega X$-bundle, and let $\cL=(\eta_X)_!\omega_{\Path_*X}$, where $\omega_{\Path_*X}$ means the monoidal unit of $(\Loc(\Path_*X, \bS), \overset{!}{\otimes})$. Then we have 
\begin{align}\label{eq: proof omega, cL}
&\Hom_{\Loc(L;\bS)}(\omega_L\overset{!}{\otimes} \pi_L^!\cL, \omega_L)\simeq \Hom_{\Gamma(X,\psi_L^{-1}\mu\cShv_{L;\bS}^\vee)}( \cN\overset{!}{\otimes} \cL,  \cN). 
\end{align}
Since $\pi_L$ is a homotopy equivalence, we have a homotopy Cartesian diagram
\begin{align*}
\xymatrix{\Path_*L\ar[r]\ar[d]_{\eta_L}&\Path_*X\ar[d]^{\eta_X}\\
L\ar[r]^{\pi_L}&X
},
\end{align*}
and (\ref{eq: proof omega, cL}) is equivalent to 
\begin{align}\label{eq: pre adjunction}
\Hom_{\Loc(L;\bS)}((\eta_L)_!\eta_L^!\omega_L, \omega_L)\simeq \Hom_{\Gamma(X,\psi_L^{-1}\mu\cShv_{L;\bS}^\vee)}( (\eta_X)_!\eta_X^!\cN,  \cN).
\end{align}
Here we use the adjunction pair
\[
\begin{tikzcd}[arrow style=tikz,>=stealth,row sep=4em]
\Gamma(X,\psi_L^{-1}\mu\cShv_{L;\bS}^\vee)
 \arrow[rr, shift left=.4ex, "R=\eta_X^!"]
  && \Gamma(\Path_*X, \eta_X^{-1}\psi_L^{-1}\mu\cShv_{L;\bS}^\vee)\ar[ll, shift left=.4ex, "L=(\eta_X)_!"].
\end{tikzcd}
\]
By adjunction, we get from (\ref{eq: pre adjunction})
\begin{align*}
\bS\simeq \Hom_{\Loc(\Path_*L;\bS)}(\eta_L^!\omega_L,\eta_L^!\omega_L)\simeq \Hom_{\Gamma(\Path_*X; \eta_X^{-1}\psi_L^{-1}\mu\cShv_{L;\bS}^\vee)}(\eta_X^!\cN,\eta_X^!\cN).
\end{align*}
Since $\Path_*X$ is contractible, $\eta_X^{-1}\psi_L^{-1}\mu\cShv_{L;\bS}^\vee$ is trivial, so after choosing an identification 
\begin{align*}
\eta_X^{-1}\psi_L^{-1}\mu\cShv_{L;\bS}^\vee\simeq \cLoc_{\Path_*X},
\end{align*}
$\eta_X^!\cN$ is sent to a trivial local system with costalk isomorphic to $N\in \Mod(\bS)$. Then we get 
\begin{align*}
\End_\bS(N)\simeq \bS
\end{align*}
as desired. 

\end{proof}

\appendix

\section{The canonical functor $\Corr(\Fun^\diamond(N(\Fin_*), \cC))_{\inert, \all}\longrightarrow \CAlg(\bCorr(\cC^\times))^{\rightlax}$}\label{sec: Appendix}

Let $\cC$ be an $\infty$-category that admits finite products. Then the Cartesian symmetric monoidal structure on $\cC$ determines a canonical symmetric monoidal structure on $\bCorr(\cC)$. In \cite{Jin}, we gave several concrete constructions of commutative algebra objects, their modules, and (right-lax) morphisms among them using $\Fin_*$-objects in $\cC$ and correspondences among them, under easy-to-check conditions. In this appendix, we upgrade these constructions to a canonical functor from a correspondence category of a certain functor category $\Fun^\diamond(N(\Fin_*), \cC)$ to the category of commutative algebras in $\bCorr(\cC^\times)$ with right-lax morphisms. The main results are stated in Theorem \ref{thm: appendix}. With a completely similar argument, one can get a version for the category of pairs of commutative algebras and their modules in $\bCorr(\cC)$ and a version for associative algebras and their modules (cf. Theorem \ref{thm: appendix mod}).

\subsection{An explicit model for the Gray tensor product with $[n]$}
First, for any ordinary $1$-category $J$ we present an explicit model\footnote{We remark that there is a difference between our Gray tensor product $\bX\Gray \bY$ and the traditional definition up to a swap of $X$ and $Y$. The difference is caused by our convention to put $\bX$ vertically (as indexing rows) and $\bY$ horizontally (as indexing columns). In particular, we have 
\begin{align*}
&\Maps_{\TwoCat}(\bX\Gray \bY, \bW)\simeq \Maps_{\TwoCat}(\bY, \Fun(\bX, \bW)_{\rightlax}),\\
&\Maps_{\TwoCat}(\bY\Gray \bX, \bW)\simeq \Maps_{\TwoCat}(\bY, \Fun(\bX, \bW)_{\leftlax}),
\end{align*}
where $\Fun(\bX,\bW)_{\rightlax}$ (resp. $\Fun(\bX,\bW)_{\leftlax}$) consists of genuine functors between $(\infty,2)$-categories and has 1-morphisms right-lax (resp. left-lax) natural transformations.}  for $\Seq_\bullet(J\Gray [n])$, where $\Gray$ is the Gray tensor product of $(\infty,2)$-categories. 
For any $\lambda\in \Seq_k([n])$,
we will represent it by $0\leq \lambda(0)\leq \cdots\lambda(k)\leq n$. 
Then the 1-category $\Seq_k(J\Gray [n])\underset{\Seq_k([n])}{\times} \{\lambda\}$
has the same objects as 
\begin{align}\label{eq: obj Seq Gray}
\Seq_kJ\underset{\prod\limits_{0\leq i<k}\Seq_1(J)}{\times} \prod\limits_{0\leq i<k}\Seq_{\lambda(i+1)-\lambda(i)+1}(J),
\end{align}
where the morphism $\Seq_kJ\rightarrow \prod\limits_{0\leq i<k}\Seq_1(J)$ is induced from the $k$ inert morphisms $[1]\cong \{i-1, i\}\hookrightarrow [k]$, and the morphism $\Seq_{\lambda(i+1)-\lambda(i)+1}(J)\rightarrow \Seq_{1}J$ is induced from the unique active morphism $[1]\rightarrow [\lambda(i+1)-\lambda(i)+1]$. We will represent any object of $\Seq_k(J\Gray [n])\underset{\Seq_k([n])}{\times} \{\lambda\}$ by a pair $(\alpha, (\beta_i)_{0\leq i<k})$, using the identification with objects in (\ref{eq: obj Seq Gray}). 
For two objects $(\alpha^{(j)}, (\beta_{i}^{(j)})_{0\leq i<k})$, $j=0,1,$ in $\Seq_k(J\Gray [n])\underset{\Seq_k([n])}{\times} \{\lambda\}$, we have 
\begin{align*}
&\Maps((\alpha^{(0)}, (\beta_{i}^{(0)})), (\alpha^{(1)}, (\beta_{i}^{(1)})))\\
=&\begin{cases}\{\alpha^{(0)}\}\underset{\prod\limits_{0\leq i<k}\Maps([1]\times [1], J)}{\times}\prod\limits_{0\leq i<k}(\Maps([1]\times [\lambda(i),\lambda(i+1)+1], J)\\
\underset{\prod\limits_{j=0,1}\Maps(\{j\}\times [\lambda(i),\lambda(i+1)+1], J)}{\times} \{(\beta_{i}^{(0)},\beta_{i}^{(1)})\}),  \text{ if }\alpha^{(0)}=\alpha^{(1)};\\
\emptyset, \text{ otherwise},
\end{cases}
\end{align*}
where $\alpha^{(0)}$ is also viewed as the functor $[1]\times [k]\rightarrow [k]\overset{\alpha_k^{(0)}}{\rightarrow} J$ (see Figure \ref{figure: beta morphism}). 
\begin{figure}[h]
\begin{tikzpicture}
\draw[->] (0,0) node[left]{$\cdots$} node[below] {$\alpha(i-2)$}--(2,0) node[below] {$\alpha(i-1)$};
\draw[->, blue] (2,0)--(3,1) node[cross, blue]{}--(4,1) node[cross, blue]{} --(5,1) node[cross, blue]{}--(6,0) node[black, below] {$\alpha(i)$};
\draw (4,1) node [above, blue] {$\beta^{(0)}_{i-1}$}; 
\draw[->, blue]  (2,0)--(3,-1) node[cross] {}--(4,-1) node[cross] {}--(5,-1) node[cross] {}--(6,0);
\draw[blue] (4,-1) node [below] {$\beta_{i-1}^{(1)}$};
\draw[->, red] (3,1)--(3,-1);
\draw[->, red](4,1)--(4,-1); 
\draw[->, red](5,1)--(5,-1); 
\draw[->] (6,0)--(8,0) node[below] {$\alpha(i+1)$};
\draw[->, blue] (8,0)--(9,1) node[cross, blue]{}--(10,1) node[cross, blue]{} --(11,1) node[cross, blue]{}--(12,0) node[black, below] {$\alpha(i+2)$};
\draw (10,1) node [above, blue] {$\beta^{(0)}_{i+1}$}; 
\draw[->, blue]  (8,0)--(9,-1) node[cross] {}--(10,-1) node[cross] {}--(11,-1) node[cross] {}--(12,0);
\draw[blue] (10,-1) node [below] {$\beta_{i+1}^{(1)}$};
\draw[->, red] (9,1)--(9,-1);
\draw[->, red](10,1)--(10,-1); 
\draw[->, red](11,1)--(11,-1); 
\draw[->] (12,0)--(14,0) node[below] {$\alpha(i+3)$} node[right] {$\cdots$};
\end{tikzpicture}
\caption{}\label{figure: beta morphism}
\end{figure} 
Pictorially, each object is representing by a path decorated with two kinds of nodes in which the circle nodes indicate $\alpha(j)$ and the blue cross node indicates $\beta_i(s)$ as shown in Figure \ref{figure: beta path}.

\begin{figure}[h]
\begin{tikzpicture}
\draw[->] (5,6) circle (2pt)--(5,5) node [cross, blue] {}--(4,5)--(4,4) node[cross, blue]{}--(3,4) node[cross, blue] {}--(2,4)--(2,3) circle (2pt)--(2,2) circle (2pt)--(2,1) node[cross, blue] {}--(1,1) node[cross, blue] {}--(0,1) node[cross, blue] {}--(-1,1)--(-1,0) circle (2pt); 
\draw[->] (5,-1) node[cross, blue]{}--(4,-1) node[cross, blue]{}--(3,-1) node[cross, blue]{}--(2,-1) node [cross, blue]{}--(1,-1) node [cross, blue]{}--(0,-1) node[cross, blue]{}--(-1,-1) node[below] {$\lambda(i+2)$};
\draw (5,-1) node[below]{$\ \ \lambda(i-1)$};
\draw (4,-1) node[below]{$\lambda(i)$};
\draw (2,-1) node [below]{$\lambda(i+1)$};
\draw[->] (-2, 6) circle (2pt) node [left] {$\alpha(i-1)$}--(-2,3) circle (2pt) node[left] {$\alpha(i)$}--(-2, 2) circle (2pt) node[left] {$\alpha(i+1)$}--(-2,0) circle (2pt) node[left] {$\alpha(i+2)$};
\draw[->] (-2,0)--(-2,-0.5); 
\end{tikzpicture}
\caption{}\label{figure: beta path}
\end{figure}

Equivalently, we can view each $\beta_i$ as a functor 
\begin{align}\label{eq: beta_i, path}
([\lambda(i)-1,\lambda(i+1)]\times [\lambda(i),\lambda(i+1)])^{0\leq y-x\leq 1}\overset{(\beta_{i;J}, \beta_{i;[n]})}{\longrightarrow} J\times [n]
\end{align}
such that $\beta_{i;J}$ maps every horizontal morphism to an identity morphism in $J$, and $\beta_{i;[n]}$ is taking the projection to the second factor and then including it to $[n]$ in the obvious way.

We take 
\begin{align*}
\Seq_k(J\Gray [n])=\coprod\limits_{\lambda\in \Seq_k([n])}\Seq_k(J\Gray [n])\underset{\Seq_k([n])}{\times} \{\lambda\}
\end{align*}
The rules for the compositions are obvious. For any degenerate map $[k]\rightarrow [\ell]$ in $\Delta$, the functor $\Seq_\ell(J\Gray [n])\rightarrow \Seq_k(J\Gray [n])$ is the obvious one, and for any active map $f: [k]\rightarrow [\ell]$,  the functor $\Seq_\ell(J\Gray [n])\rightarrow \Seq_k(J\Gray [n])$ is defined by joining $(\beta_v)_{f(i)\leq v< f(i+1)}$ via the presentation (\ref{eq: beta_i, path}) into a single $\beta_{[f(i), f(j)]}$
\begin{align}\label{eq: beta_f(i,j)}
\beta_{[f(i), f(j)]}: ([\lambda(f(i))-1, \lambda(f(j))]\times [\lambda(f(i)),\lambda(f(j))])^{0\leq y-x\leq 1}\longrightarrow J\times [n].
\end{align} 
that removes all the joint circle nodes (see Figure \ref{figure: beta path}). In the following, we will also use the notation
\begin{align}\label{eq: beta_(i,j)+}
\beta_{[i, j]}^+: &([\lambda(i)_-, (\lambda(i+1)-1)_+]\times [\lambda(i),\lambda(i+1)])^{0\leq y-x\leq 1}\underset{\{(\lambda(i+1)-1/2, \lambda(i+1))\}}{\cup}\\
\nonumber&([\lambda(i+1)_-, (\lambda(i+2)-1)_+]\times [\lambda(i+1),\lambda(i+2)])^{0\leq y-x\leq 1}\underset{\{(\lambda(i+2)-1/2, \lambda(i+2))\}}{\cup}\cdots\\
\nonumber&\underset{\{(\lambda(j-1)-1/2, \lambda(j-1))\}}{\cup}([\lambda(j-1)_-, (\lambda(j)-1)_+]\times [\lambda(j-1),\lambda(j)])^{0\leq y-x\leq 1}\\
\nonumber&\longrightarrow J\times [n].
\end{align} 
to denote the joining of $(\beta_v)_{i\leq v<j}$ \emph{without} removing the joint circle nodes. 
Here a joint circle node is marked by $(x_-, y)$ (resp. $(x_+, y)$) if it has an outgoing edge (resp. incoming edge) connecting to (resp. from) $(x,y)$; the poset $[a_-, b_+]$ is equivalent to $[a-1, b+1]$ with $a-1$ and $b+1$ marked by $a_-$ and $b_+$ respectively, in particular the operation $y-x$ is defined by identifying $a_-$ (resp. $b_+$) with $a-1$ (resp. $b+1$); in the formation of the coproduct, we have
\begin{align*}
\{(\lambda(v)-1/2,\lambda(v))\}&\hookrightarrow ([\lambda(v-1)_-, (\lambda(v)-1)_+]\times [\lambda(v-1),\lambda(v)])^{0\leq y-x\leq 1}\\
(\lambda(v)-1/2,\lambda(v))&\mapsto ((\lambda(v)-1)_+, \lambda(v)),\\
\{(\lambda(v)-1/2,\lambda(v))\}&\hookrightarrow ([\lambda(v)_-, (\lambda(v+1)-1)_+]\times [\lambda(v),\lambda(v+1)])^{0\leq y-x\leq 1}\\
(\lambda(v)-1/2,\lambda(v))&\mapsto (\lambda(v)_-, \lambda(v)).\\
\end{align*}

For simplicity, by some abuse of notations, we will denote the domain of $\beta_{[i,j]}^+$ in (\ref{eq: beta_(i,j)+}) by 
\begin{align}\label{eq: domain beta}
([\lambda(i)_-, (\lambda(j)-1)_+]\widetilde{\times} [\lambda(j-1),\lambda(j)])^{0\leq y-x\leq 1}.
\end{align}
In a similar fashion, assume for each $x\in [\lambda(v)_-, (\lambda(v+1)-1)_+], i\leq v<j$ we assign a value $\kappa(x)\geq 0$, then we use 
\begin{align}\label{eq: domain beta, kappa}
([\lambda(i)_-, (\lambda(j)-1)_+]\widetilde{\times} [\lambda(j-1),\lambda(j)])^{0\leq y-x\leq \kappa(x)}
\end{align}
to denote the coproduct as in the domain of $\beta_{[i,j]}^+$ (\ref{eq: beta_(i,j)+}) with $0\leq y-x\leq 1$ replaced by $0\leq y-x\leq \kappa(x)$. We also set
\begin{align*}
\beta_{[i,j];J}^+=\proj_{J}\circ \beta_{[i,j]}^+. 
\end{align*}

We remark that the above model of $\Seq_k(J\Gray [n])$ also works for any $\infty$-category $J$.  

\subsection{Definition of the canonical functor}\label{subsec: def canonical}
Define $\Fun^\diamond(N(\Fin_*),\cC)$ to be the full subcategory of $\Fun(N(\Fin_*),\cC)$ consisting of $\Fin_*$-objects satisfying the condition in \cite[Theorem 2.6]{Jin} (it was denoted by $\Fun'(N(\Fin_*),\cC)$ in \emph{loc. cit.}). Let $\inert$ (resp. $\mathsf{active}$) be the class of morphisms satisfying that the diagrams (2.4.9) (resp. (2.4.11)) in \emph{loc. cit.} are Cartesian squares. Then 
$\Seq_n(\Corr(\Fun^\diamond(N(\Fin_*),\cC^\times)))_{\inert, \all}$ is the full subcategory of 
\begin{align*}
\Maps(([n]\times [n]^{op})^{\geq \dgnl}, \Fun(N(\Fin_*), \cC)) 
\end{align*}
consisting of functors $F$ satisfying 
\begin{itemize}
\item[(i)] Each square 
\begin{align*}
\xymatrix{F(i,j+1)\ar[r]\ar[d]&F(i,j)\ar[d]\\
F(i+1,j+1)\ar[r]&F(i+1,j)
}
\end{align*}
is a Cartesian square. 

\item[(ii)] For each $(i,j)\in ([n]\times [n]^{op}$), $F(i,j)$ as a $\Fin_*$-object in $\cC$ satisfies the conditions in \cite[Theorem 2.6 (ii)]{Jin}, that is it defines a commutative algebra object in $\Corr(\cC)$;

\item[(iii)] For each fixed $j\in [n]^{op}$, the vertical morphisms between $\Fin_*$-objects satisfy the condition that the diagram (2.4.9) in \cite{Jin} is Cartesian for every $n$.  
\end{itemize}

We are going to construct a canonical functor in $\Spc^{\Delta^{op}}$
\begin{align}\label{eq: P_C, bullet}
P_{\cC^\times,\bullet}: &\Seq_\bullet(\Corr(\Fun^\diamond(N(\Fin_*),\cC^\times)))_{\inert, \all}\longrightarrow \\
\nonumber&\Maps_{(\OneCat)^{\Delta^{op}}_{/\Seq_{\clubsuit}(N(\Fin_*))}}(\Seq_{\clubsuit}(N(\Fin_*)\Gray[\bullet]), \Seq_\clubsuit(\Corr(\cC^\times)^{\otimes, \Fin_*})).
\end{align}

\subsubsection{Step 1: the case for $\bullet=n$}
For any ordinary 2-category (including 1-categories) $K$, let $I_K\rightarrow N(\Delta)^{op}$ be the coCartesian fibration from the Grothendieck  construction of the functor $\Seq_\bullet: N(\Delta)^{op}\rightarrow \OneCat^{\ord}$.

For any $([k],(\alpha, (\beta_i), \lambda))\in I_{N(\Fin_*)\Gray [n]}$, let 
\begin{align}\label{eq
: pi_[k], alpha}
\pi_{([k],\alpha,(\beta_i), \lambda)}:\Gamma_{([k],\alpha, (\beta_i), \lambda)}\longrightarrow([k]\times [k]^{op})^{\geq \dgnl}
\end{align}
be the Cartesian fibration from the Grothendieck construction for the natural functor
\begin{align*}
(([k]\times [k]^{op})^{\geq \dgnl})^{op}&\longrightarrow \OneCat^{\ord}\\
(i,j)&\mapsto 
([\lambda(i)_-, (\lambda(j)-1)_+]\widetilde{\times} [\lambda(i),\lambda(j)]^{op})^{(0\leq y-x\leq \kappa_{\beta,\lambda}(x))}
\end{align*}
(the right-hand-side was introduced in (\ref{eq: domain beta, kappa})), where for each $x\in [\lambda(v)_-, (\lambda(v+1)-1)_+]$, we define $\kappa_{\beta,\lambda}(x)=1$ (resp. $0$) if $x=\lambda(v)_-$ (resp. $x=(\lambda(v+1)-1)_+$) and in the other cases $\kappa_{\beta,\lambda}(x)$ is the largest integer $r$ such that the restriction 
\begin{align*}
\beta^+_{[v,v+1]; N(\Fin_*)}: ([x, x+r-1]\times [x, x+r])^{0\leq y_2-y_1\leq 1}\rightarrow N(\Fin_*)
\end{align*}
is a constant functor ($\beta^+_{[v,v+1]}$ was introduced in (\ref{eq: beta_(i,j)+}) with $J=N(\Fin_*)$). For any morphism $(i_0,j_0)\rightarrow (i_1,j_1)$ in $(([k]\times [k]^{op})^{ \geq \dgnl})^{op}$, i.e. $i_0\geq i_1,\ j_0\leq j_1$, the corresponding functor  
is uniquely determined by the natural inclusion. For any $x\in [\lambda(i)_-, (\lambda(j)-1)_+]$, let 
\begin{align}\label{eq: def p_n (1)}
x^+= \begin{cases}x,\text{ if }x\neq \lambda(v)_-, (\lambda(v+1)-1)_+\text{ for any  }i\leq v<j;\\
\lambda(v), \text{ if }x=\lambda(v)_-;\\
\lambda(v+1), \text{ if }x=(\lambda(v+1)-1)_+.
\end{cases}
\end{align}
Define
\begin{align}\label{F_[k],alpha}
F_{([k],\alpha,(\beta_i),\lambda)}:& \Gamma_{([k],\alpha, (\beta_i), \lambda)}\longrightarrow ([n]\times[n]^{op})^{\geq \dgnl}\\
\nonumber&(i,j;x,y)\mapsto(x^+,y)
\end{align}
that takes the Cartesian arrows $(i_1,j_1; x,y)\rightarrow (i_0,j_0;x,y)$ to the identity morphisms. 

Let $T^\Comm$ be the ordinary 1-category defined by
\begin{itemize}
\item Objects: pairs $(\lng n\rng, j \in\lng n\rng^\circ), \lng n\rng\in N(\Fin_*)$,
\item Morphisms: a morphism $(\lng n\rng, j)\rightarrow (\lng m\rng, k)$ is given by a morphism $\alpha: \lng m\rng\rightarrow \lng n\rng$ in $N(\Fin_*)$ satisfying the condition $\alpha(k)=j$.
\end{itemize}
There is a natural projection $T^\Comm\rightarrow N(\Fin_*)^{op}$. See \cite[Subsection 2.2.1]{Jin} for the role of $T^\Comm$ (and its associative version $T$) in the definition of a Cartesian fibration $\cC^{\times, (\Fin_*)^{op}}\rightarrow N(\Fin_*)^{op}$ that exhibiting the Cartesian symmetric monoidal (and plain monoidal) structure of $\cC$. Recall the definition of a Cartesian fibration $\cT^\Comm\longrightarrow I_{N(\Fin_*)}$ in \emph{loc. cit.} determined by the functor
\begin{align*}
I_{N(\Fin_*)}^{op}&\longrightarrow (\OneCat)^{\ord}\\
([k], \alpha)&\mapsto ([k]\times[k]^{op})^{\geq \dgnl}\underset{N(\Fin_*)^{op}}{\times}T^\Comm,
\end{align*}
where on the right-hand-side the functor $([k]\times[k]^{op})^{\geq \dgnl}\rightarrow N(\Fin_*)^{op}$ is given by first projecting to $[k]^{op}$ and then applying $\alpha^{op}$. 

Consider the natural functor 
\begin{align*}
I_{N(\Fin_*)\Gray [n]}^{op}&\longrightarrow \OneCat^{\ord}\\
([k],\alpha,(\beta_i), \lambda))&\mapsto \Gamma_{([k],\alpha, (\beta_i), \lambda)}\underset{N(\Fin_*)^{op}}{\times} T^{\Comm},
\end{align*}
where the functor $ \Gamma_{([k],\alpha, (\beta_i), \lambda)}\rightarrow N(\Fin_*)^{op}$ is from the composition of the projection to $[k]^{op}$ and applying $\alpha^{op}$, 
whose Grothendieck construction gives a Cartesian fibration
\begin{align}\label{eq: def cT^+}
\pi_{\cT^+,n}: \cT^+_{N(\Fin_*)\Gray [n]}\longrightarrow I_{N(
\Fin_*)\Gray [n]}.
\end{align}
It is clear that the functors $F_{([k],\alpha,(\beta_i),\lambda)}$ (\ref{F_[k],alpha}) induces a functor 
\begin{align*}
F_{\cT^+,n}:\cT^+_{N(\Fin_*)\Gray [n]}\longrightarrow ([n]\times [n]^{op})^{\geq \dgnl}. 
\end{align*}

Now for any $[n]\in N(\Delta)^{op}$, we have well defined functors
\begin{align}\label{diagram: p_n, q_n}
\xymatrix{
\cT^+_{N(\Fin_*)\Gray [n]}\ar[r]^{p_{n}=(p_n^{(1)}=F_{\cT^+,n}, p_n^{(2)})\ \ \ \ \ \ \ \ \ } \ar[d]_{q_n}&([n]\times [n]^{op})^{\geq \dgnl}\times N(\Fin_*)\\
I_{N(\Fin_*)\Gray[n]}\underset{I_{N(\Fin_*)}}{\times}\cT^\Comm
},
\end{align}
where $p_n^{(2)}$ is the canonical functor that sends
\begin{align*}
&([k], \alpha,(\beta_i),\lambda; (i_0,j_0; x,y); u\in \alpha(j_0)^\circ)\mapsto (\beta_{[i_0, j_0];N(\Fin_*)}^+(x\rightarrow (\lambda(j_0)-1)_+))^{-1}(u)\sqcup \{*\}
\end{align*}
For any morphism in $([k],\alpha,(\beta_i),\lambda;\Gamma_{([k],\alpha,(\beta_i), \lambda)}\underset{N(\Fin_*)^{op}}{\times} T^\Comm)$ whose projection to $([k]\times [k]^{op})^{\geq \dgnl}$ is a horizontal (resp. vertical) morphism,  $p_n^{(2)}$ sends it to the obvious inert (resp. active) morphism. It is not hard to check (cf. \cite[Lemma 2.10]{Jin}) that $p_n^{(2)}$  canonically determines a functor from $\cT^+_{N(\Fin_*)\Gray [n]}$ to $N(\Fin_*). $ The functor $q_n$ is determined by the projections $\pi_{([k],\alpha,(\beta_i),\lambda)}$ (\ref{eq
: pi_[k], alpha}) and  it is a Cartesian fibration.

Diagram (\ref{diagram: p_n, q_n}) defines a sequence of canonical functors
\begin{align*}
&\Maps(([n]\times [n]^{op})^{\geq \dgnl}, \Fun(N(\Fin_*),\cC))\overset{\circ p_n}{\longrightarrow} \Maps(\cT^+_{N(\Fin_*)\Gray [n]}, \cC)\\
&\overset{\RKan_{q_n}}{\longrightarrow}\Maps(I_{N(\Fin_*)\Gray[n]}\underset{I_{N(\Fin_*)}}{\times}\cT^\Comm, \cC),
\end{align*}
whose essential image lies in the full subcategory 
\begin{align*}
&\Maps_{I_{N(\Fin_*)}}(I_{N(\Fin_*)\Gray[n]}^\natural\underset{I_{N(\Fin_*)}}{\times}(\cT^\Comm)^\natural, (\cC\times I_{N(\Fin_*)})^\natural)\\
\simeq &\Maps_{(\OneCat)^{\Delta^{op}}_{/\Seq_\clubsuit (N(\Fin_*))}}(\Seq_\clubsuit(N(\Fin_*)\Gray [n]), f_{\Delta^\clubsuit}^{\cC,\Comm}).
\end{align*}
Here $I_{N(\Fin_*)\Gray[n]}^\natural$ and $\cC\times I_{N(\Fin_*)}$ (resp. $(\cT^\Comm)^\natural$) are marked by the coCartesian (resp. Cartesian) edges over $I_{N(\Fin_*)}$, and $f_{\Delta^\clubsuit}^{\cC,\Comm}$ is defined in \cite[(2.3.2)]{Jin} with $\cC^{\times, \Delta}$ replaced by $\cC^{\times, \Fin_*^{op}}$.

Now we take the composite functor of the above and restrict it to the full subcategory 
\begin{align*}
\Seq_n(\Corr(\Fun^\diamond(N(\Fin_*),\cC))_{\inert, \all})\subset \Maps(([n]\times [n]^{op})^{\geq \dgnl}, \Fun(N(\Fin_*),\cC))\
\end{align*}
(see the beginning of this subsection). It is straightforward to check that the functor canonically factors through the full embedding
\begin{align*}
&\Maps_{(\OneCat)^{\Delta^{op}}_{/\Seq_\clubsuit (N(\Fin_*))}}(\Seq_\clubsuit(N(\Fin_*)\Gray [n]), \Seq_\clubsuit(\bCorr(\cC)^{\otimes, \Fin_*}))\\
&\hookrightarrow \Maps_{(\OneCat)^{\Delta^{op}}_{/\Seq_\clubsuit (N(\Fin_*))}}(\Seq_\clubsuit(N(\Fin_*)\Gray [n]), f_{\Delta^\clubsuit}^{\cC,\Comm}).
\end{align*}
Then we let 
\begin{align*}
P_{\cC^\times, n}: &\Seq_n(\Corr(\Fun^\diamond(N(\Fin_*),\cC))_{\inert, \all})\longrightarrow \\&\Maps_{(\OneCat)^{\Delta^{op}}_{/\Seq_\clubsuit (N(\Fin_*))}}(\Seq_\clubsuit(N(\Fin_*)\Gray [n]), \Seq_\clubsuit(\bCorr(\cC)^{\otimes, \Fin_*}))
\end{align*}
be the resulting functor. 

\subsubsection{Step 2: Definition of $P_{\cC^\times, \bullet}$}
We just need to show the functoriality of $P_{\cC^\times, n}$ with respect to $n$. 

In the following, with some abuse of notation, let 
\begin{align*}
\beta_{i;J}: [\lambda(i)_-,(\lambda(i+1)-1)_+]\rightarrow J
\end{align*}
replace for $\beta_{i;J}$ defined in (\ref{eq: beta_i, path}), which contains the same amount of information. 
For any $\sigma: [n]\rightarrow [m]$ and $\lambda: [k]\rightarrow [n]$  in $N(\Delta)$ and 
$\beta_{i,J}$ as above, define 
\begin{align*}
\sigma_*\beta_{i;J}: [(\sigma\circ \lambda(i))_-,(\sigma\circ \lambda(i+1)-1)_+]\rightarrow J
\end{align*}
 as follows. 
For each $s\in [(\sigma\circ \lambda(i))_-,(\sigma\circ \lambda(i+1)-1)_+]$, let
\begin{align*}
r_i(s)=\begin{cases}
&\lambda(i)_-,\text{ if }s=(\sigma\circ\lambda(i))_-,\\
&\max\{v\in [\lambda(i),\lambda(i+1)-1]: \sigma(v)\leq s\}, \text{ if } \sigma\circ \lambda(i)\leq s< \sigma\circ\lambda(i+1),\\
&(\lambda(i+1)-1)_+, \text{ if }s=(\sigma\circ \lambda(i+1)-1)_+.
\end{cases}
\end{align*}
Then for each $s_1\leq s_2$ in $[\sigma\circ \lambda(i)_-,(\sigma\circ \lambda(i+1)-1)_+]$, it 
 is sent via $\sigma_*\beta_{i;J}$ to the composite morphism of $\beta_{i;J}|_{[r_i(s_1), r_i(s_2)]}$.

Now we can give a formula for the natural morphism in $\OneCat^{\Delta^{op}}$ associated to $\sigma: [n]\rightarrow[m]$
\begin{align}\label{eq: Seq, sigma}
\nonumber\Seq_\bullet(N(\Fin_*)\Gray [n])&\longrightarrow \Seq_\bullet(N(\Fin_*)\Gray [m])\\
([k],\alpha, (\beta_i),\lambda)\mapsto &([k], \sigma_*\alpha=\alpha, (\sigma_*\beta_i), \sigma\circ \lambda). 
\end{align}
 The formula extends naturally to a commutative diagram of functors between ordinary 1-categories 
\begin{align}\label{diagram: q_n, f_T+}
\xymatrix{
\cT^+_{N(\Fin_*)\Gray[n]}\ar[r]^{f_{\cT^+; \sigma}}\ar[d]_{q_n}&\cT^+_{N(\Fin_*)\Gray[m]}\ar[d]^{q_m}\\
I_{N(\Fin_*)\Gray[n]}\underset{I_{N(\Fin_*)}}{\times}\cT^\Comm\ar[r]^{f_{I;\sigma}}&I_{N(\Fin_*)\Gray[m]}\underset{I_{N(\Fin_*)}}{\times}\cT^\Comm
},
\end{align}
where $f_{I;\sigma}$ is the combination of (\ref{eq: Seq, sigma}
) and the identity functor on $\cT^\Comm$, and $f_{\cT^+,\sigma}$ is defined as follows. 
\begin{itemize}
\item[(i)] 
The functor $f_{\cT^+,\sigma}$ sends  each fiber of $q_n$ over $(([k],\alpha,(\beta_\nu), \lambda; i,j; u\in \alpha(j)^\circ)$ to the fiber of $q_m$ over $([k],\sigma_*\alpha, (\sigma_*\beta_\nu), \sigma\circ \lambda; i,j;u\in \sigma_*\alpha(j)^\circ=\alpha(j)^\circ)$: 
\begin{align*}
([k],\alpha,(\beta_\nu), \lambda; i,j; x,y;u\in \alpha(j)^\circ)\mapsto ([k],\sigma_*\alpha, (\sigma_*\beta_\nu), \sigma\circ \lambda; i,j; \hat{\sigma}_{\lambda}(x),\sigma(y); u\in \sigma_*\alpha(j)^\circ=\alpha(j)^\circ),
\end{align*}
where 
\begin{align*}
\hat{\sigma}_\lambda(x)=\begin{cases}&\sigma(x), \text{ if }x\in (\lambda(v)_-, (\lambda(v+1)-1)_+)\text{ and }\sigma(x)< \sigma\circ\lambda(v+1)\\
&(\sigma\circ\lambda(v))_-, \text{ if }x=\lambda(v)_-,\\
&(\sigma\circ\lambda(v+1)-1)_+, \text{ if }x=(\lambda(v+1)-1)_+\text{ or }\sigma(x)= \sigma\circ\lambda(v+1).
\end{cases}
\end{align*}

\item[(ii)] For each morphism in $I_{N(\Fin_*)\Gray[n]}\underset{I_{N(\Fin_*)}}{\times} \cT^\Comm$ induced from $c: [\ell]\rightarrow [k]$ in $N(\Delta)$: 
\begin{align*}
([k],\alpha, (\beta_\nu),\lambda; i,j; u\in \alpha(j)^\circ)\rightarrow ([\ell], c^*\alpha, (\beta'_\mu), \lambda\circ c; i',j'; u'\in \alpha(c(j'))^\circ) 
\end{align*}
with a morphism $c^*(\beta_\nu)\rightarrow (\beta'_\mu)$, $i\leq c(i')\leq c(j')\leq j$ and $(\alpha(c(j')\rightarrow j))(u')=u$, any Cartesian morphism over it is of the form
\begin{align}\label{eq: Cart q_n}
([k],\alpha, (\beta_\nu),\lambda; i,j;x,y; u\in \alpha(j)^\circ)\rightarrow ([\ell], c^*\alpha, (\beta'_\mu), \lambda\circ c; i',j'; x,y; u'\in \alpha(c(j'))^\circ),
\end{align}
where $x\in [\lambda\circ c(i')_-, (\lambda\circ c(j')-1)_+]$ and $y\in [\lambda\circ c(i'),\lambda\circ c(j')]$, and 
if $x=\lambda\circ c(\gamma)_-$ or $(\lambda\circ c(\gamma+1)-1)_+$ on the right-hand-side, then it is understood as $x=\lambda(c(\gamma))_-$ or $(\lambda(c(\gamma+1)-1)_+$ respectively on the left-hand-side. The functor $f_{\cT_+,\sigma}$ sends the morphism (\ref{eq: Cart q_n}) to 
\begin{align}\label{eq: image of Cart}
&([k],\sigma_*\alpha, (\sigma_*\beta_\nu),\sigma\circ\lambda; i,j; \hat{\sigma}_\lambda(x),\sigma(y); u\in \alpha(j)^\circ)\rightarrow\\
\nonumber& ([\ell], \sigma_*(c^*\alpha), (\sigma_*\beta'_\mu), \sigma\circ\lambda\circ c; i',j';\hat{\sigma}_{\lambda\circ c}(x),\sigma(y); u'\in \alpha(c(j'))^\circ). 
\end{align}
Note that (\ref{eq: image of Cart}) is in general \emph{not} a  Cartesian morphism, for example if there exists $c(v)<q<c(v+1)$ such that $\lambda\circ c(v)\leq \lambda(q-1)< \lambda(q)<\lambda(q+1)\leq \lambda\circ c(v+1)$ and $\sigma(\lambda(q)-1)=\sigma(\lambda(q))$, then 
\begin{align*}
&\hat{\sigma}_\lambda(\lambda(q)-1)=\sigma\circ\lambda(q)_-,\\
&\hat{\sigma}_{\lambda\circ c}(\lambda(q)-1)= \sigma\circ \lambda(q),
\end{align*}
but there is always a unique arrow $\hat{\sigma}_{\lambda}(x)\rightarrow \hat{\sigma}_{\lambda\circ c}(x)$ that makes (\ref{eq: image of Cart}) well defined. The assignment (\ref{eq: image of Cart}) is clearly well behaved under compositions of Cartesian morphisms over $q_n$. 

\item[(iii)] According to \cite[Lemma 2.10]{Jin}, to see that the data (i) and (ii) give a well defined $f_{\cT_+, \sigma}$, we just need to check that for each commutative diagram
\begin{align*}
\xymatrix{(\substack{[k],\alpha, (\beta_\nu),\lambda; i,j;\\x,y; u\in \alpha(j)^\circ})\ar[r]\ar[d]& (\substack{[\ell], c^*\alpha, (\beta'_\mu), \lambda\circ c; i',j';\\
 x,y; u'\in \alpha(c(j'))^\circ})\ar[d]\\
 (\substack{[k],\alpha, (\beta_\nu),\lambda; i,j;\\z,w; u\in \alpha(j)^\circ})\ar[r]&(\substack{[\ell], c^*\alpha, (\beta'_\mu), \lambda\circ c; i',j';\\
 z,w; u'\in \alpha(c(j'))^\circ})
}
\end{align*}
with $x\leq z\leq w\leq y$, 
we have the following diagram commute
\begin{align*}
\xymatrix{(\substack{[k],\sigma_*\alpha, (\sigma_*\beta_\nu),\lambda; i,j; \\
\hat{\sigma}_\lambda(x),\sigma(y); u\in \alpha(j)^\circ})\ar[r] \ar[d]&(\substack{[\ell], \sigma_*(c^*\alpha), (\sigma_*\beta'_\mu), \lambda\circ c; i',j';\\
\hat{\sigma}_{\lambda\circ c}(x),\sigma(y); u'\in \alpha(c(j'))^\circ})\ar[d]\\
(\substack{[k],\sigma_*\alpha, (\sigma_*\beta_\nu),\lambda; i,j; \\
\hat{\sigma}_\lambda(z),\sigma(w); u\in \alpha(j)^\circ})\ar[r]&(\substack{[\ell], \sigma_*(c^*\alpha), (\sigma_*\beta'_\mu), \lambda\circ c; i',j';\\
\hat{\sigma}_{\lambda\circ c}(z),\sigma(w); u'\in \alpha(c(j'))^\circ})
},
\end{align*}
but this is obvious. 
\end{itemize}
The diagrams (\ref{diagram: q_n, f_T+}) for different morphisms in $N(\Delta)$ determines a functor
\begin{align}\label{eq: q_bullet}
q^\bullet: \cT_{N(\Fin_*)\Gray[\bullet]}\longrightarrow I_{N(\Fin_*)\Gray[\bullet]}\underset{I_{N(\Fin_*)}}{\times}\cT^\Comm
\end{align}
in $(\OneCat^\ord)^{\Delta}$.

Now we have the following diagrams of functors (between ordinary 1-categories) and natural transformations
\begin{equation}\label{diagram: sigma, NT}
\begin{tikzpicture}[baseline=(current  bounding  box.center)]
\node (a) at (0,0) {$\cT^+_{N(\Fin_*)\Gray[n]}$};
\node (b) at (10,0) {$([n]\times [n]^{op})^{\geq \dgnl}\times N(\Fin_*)$};
\node (c) at (0,-3) {$\cT^+_{N(\Fin_*)\Gray[m]}$};
\node (d) at (10, -3) {$([m]\times [m]^{op})^{\geq \dgnl}\times N(\Fin_*)$}; 
\draw[->] (a) to [] node (f) [above] {$p_n$} (b);
\draw[->] (c) to [] node (ff) [above] {$p_m$} (d);
\draw[->] (a) to [] node (g) [left] {$f_{\cT^+;\sigma}$} (c);
\draw[->] (b) to [] node (gg) [right] {$(\sigma, \sigma^{op};id)$} (d);  
\draw[-{Implies},double distance=2.5pt,shorten >=2pt,shorten <=2pt] (b) to node[] [above] {$\delta_\sigma$} (c);
\end{tikzpicture}
\end{equation}
where for each object $A=([k], \alpha, (\beta_\nu), \lambda; i,j; x,y;u\in\alpha(j)^\circ)$, the corresponding morphism
\begin{align*}
\delta_\sigma(A):& (\sigma, \sigma^{op};id)\circ p_n(A)=((\sigma,\sigma^{op})(F_{[k],\alpha, (\beta_\nu),\lambda}(x,y)); \beta^+_{[i,j]}(x\rightarrow (\lambda(j)-1)_+)^{-1}(u)\cup\{*\})\\
&\rightarrow p_m\circ f_{\cT^+;\sigma}(A)= (F_{[k],\alpha, (\sigma_*\beta_\nu),\sigma\circ\lambda}(\hat{\sigma}_\lambda(x),\sigma(y)); (\sigma_*\beta)_{[i,j]}^+(\hat{\sigma}_\lambda(x)\rightarrow (\sigma(\lambda(j))-1)_+)^{-1}(u)\cup\{*\})
\end{align*}
is the identity morphism in the first factor and is the obvious active map in the second factor. It is straightforward to check that (\ref{diagram: sigma, NT}) for different morphisms in $N(\Delta)$ assemble to be a functor 
\begin{align}\label{eq: predelta, Fin_*}
N(\Delta)\Gray [1]\longrightarrow \bOneCat^{\ord}.
\end{align}

For any $\infty$-category $\cC$ that admits finite products, the composition of (\ref{eq: predelta, Fin_*}) with $\Fun(-,\cC)$ gives a functor 
\begin{align}\label{eq: delta, Fin_*}
p_{N(\Delta)^{op};\cC}: [1]\Gray N(\Delta)^{op}\longrightarrow \bOneCat
\end{align}
 representing a left-lax natural transformation between two functors $N(\Delta)^{op}\rightarrow \bOneCat$
\begin{align*}
\Fun([\bullet]\times [\bullet]^{op})^{\geq \dgnl}, \cC)\xdashrightarrow{p_{\bullet;\cC}} \Fun(\cT^+_{N(\Fin_*)\Gray[\bullet]},\cC).
\end{align*}
Composing with the right Kan extension along $q^\bullet$ (\ref{eq: q_bullet}), we get another 
left-lax natural transformation
\begin{align}\label{eq: hat p_C}
\Fun(([\bullet]\times [\bullet]^{op})^{\geq \dgnl}, \cC)\xdashrightarrow{\hat{p}_{\bullet;\cC}} \Fun(I_{N(\Fin_*)\Gray[\bullet]}\underset{I_{N(\Fin_*)}}{\times} \cT^\Comm,\cC).
\end{align}

\begin{lemma}\label{lemma: hat p, genuine}
In fact $\hat{p}_{\bullet, \cC}$ (\ref{eq: hat p_C}) is a genuine natural transformation. 
\end{lemma}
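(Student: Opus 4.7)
\emph{Plan.} The goal is to show that the left-lax natural transformation $\hat{p}_{\bullet;\cC}$ constructed in (\ref{eq: hat p_C}) has invertible structural 2-cells, i.e.\ for every $\sigma\colon [n]\to [m]$ in $N(\Delta)$ the comparison morphism
\[
\hat{p}_{n;\cC}((\sigma,\sigma^{op})^*F)\Longrightarrow (f_{I;\sigma})^*\hat{p}_{m;\cC}(F),
\]
obtained from the 2-cell $\delta_\sigma$ of diagram~(\ref{diagram: sigma, NT}) together with the Beck--Chevalley base-change morphism for the square (\ref{diagram: q_n, f_T+}), is an equivalence whenever $F$ corresponds to an object of $\Seq_m(\Corr(\Fun^\diamond(N(\Fin_*),\cC))_{\inert,\all})$. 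Since $q_n,q_m$ are Cartesian fibrations and $\cC$ admits the relevant limits, each right Kan extension $(q_\bullet)_*$ is computed pointwise as a limit over the explicit poset fibers $\mathrm{Fib}_B$ described in Step~1. It therefore suffices to check, for each $B=([k],\alpha,(\beta_\nu),\lambda;i,j;u\in\alpha(j)^\circ)$ of the base, that the comparison of limits at $B$ is an equivalence.

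The first reduction is to observe that $\delta_\sigma$ is strictly the identity on the $([n]\times[n]^{op})^{\geq\dgnl}$-coordinate: the identity $\hat{\sigma}_\lambda(x)^+=\sigma(x^+)$, verified case-by-case from the definition of $\hat{\sigma}_\lambda$ (\ref{eq: def p_n (1)}), gives $(\sigma,\sigma^{op})\circ p_n^{(1)}=p_m^{(1)}\circ f_{\cT^+;\sigma}$ on the nose. Consequently the two $\cC$-valued diagrams being compared agree up to the $\Fin_*$-coordinate only, where they differ by an active map in $N(\Fin_*)$ that merges precisely those indices of $\alpha(j)^\circ$ whose $\beta^+$-paths are collapsed by $\sigma_*$. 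After applying $F$, the product decomposition
\[
F(a,b)(\lng s\rng)\simeq \prod_{k\in \lng s\rng^\circ} F(a,b)(\{k,*\})
\]
from \cite[Theorem 2.6 (ii)]{Jin}, valid because $F$ lands in $\Fun^\diamond(N(\Fin_*),\cC)$, turns this active map into the canonical projection onto the surviving product factors.

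To conclude one must match the projected-away factors with the ``extra'' intervals of $\mathrm{Fib}_B$ relative to $\mathrm{Fib}_{f_{I;\sigma}(B)}$. The definition of $\kappa_{\beta,\lambda}(x)$ — as the largest $r$ for which $\beta^+_{[v,v+1];N(\Fin_*)}$ is constant on $[x,x+r]$ — is tailored for this: the extra coordinates in $\mathrm{Fib}_B$ occur exactly where $\beta^+$ is constant, and over those coordinates $F$-values decompose as products with the extra factors, using \cite[Theorem~2.6 (ii)]{Jin} together with the Cartesian-square conditions (ii)--(iii) on $F$ imposed by membership in $\Seq_m(\Corr(\Fun^\diamond(N(\Fin_*),\cC))_{\inert,\all})$. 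Since right Kan extensions distribute over products, the $n$-fiber limit factors as (the $m$-fiber limit)~$\times$~(the extra product factors), and the $\delta_\sigma$-projection kills precisely these extras; combined with the Beck--Chevalley identification between the two fiber limits, this yields the required equivalence.

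The main obstacle is the combinatorial bookkeeping that pairs the extra intervals of $\mathrm{Fib}_B$ with the extra $\Fin_*$-product factors appearing on the $n$-side. I plan to handle this by factoring an arbitrary $\sigma\in N(\Delta)$ into elementary face and degeneracy maps and verifying the statement for each elementary case separately: for a face $d_i\colon [n]\hookrightarrow[m]$ the $n$-fiber embeds into the $m$-fiber and no extra factors appear, while for a degeneracy $s_i\colon[n]\twoheadrightarrow[m]$ a single extra interval appears, whose extra factor is manifestly cancelled by the active-map projection from $\delta_\sigma$. Assembling these elementary cases via the functoriality in $N(\Delta)$ of the natural transformation~(\ref{eq: delta, Fin_*}) then upgrades $\hat{p}_{\bullet;\cC}$ to a genuine natural transformation.
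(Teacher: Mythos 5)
Your high-level outline matches the paper's: reduce the question to showing that the comparison of pointwise limits over the fibers of $q_\bullet$ is an equivalence, factor $\sigma$ into elementary face and degeneracy maps, and check each elementary case. The observation that $\delta_\sigma$ is the identity on the $([n]\times[n]^{op})^{\geq\dgnl}$-coordinate and purely an active map on the $N(\Fin_*)$-coordinate is also explicitly in the paper's construction, so that part is fine.

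However, there is a genuine gap in your argument, and it is at the central step. You invoke the Segal product decomposition $F(a,b)(\lng s\rng)\simeq\prod_{k\in\lng s\rng^\circ}F(a,b)(\{k,*\})$ from \cite[Theorem 2.6(ii)]{Jin} and then assert that the active map in $\delta_\sigma$ ``turns into the canonical projection onto the surviving product factors.'' This is not what an active morphism of $\Fin_*$ does on a Segal ($\Fin_*$-monoid) object: under the Segal decomposition, it is the \emph{inert} morphisms that induce projections, while \emph{active} morphisms induce multiplication (for merging) or unit insertion (for padding) maps. So the active-map component of $\delta_\sigma$ is not a projection, and the cancellation of ``extra product factors'' against the ``extra intervals'' of the fiber, as you describe it, does not go through. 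Moreover, this reliance on the Segal condition restricts your claim to objects of $\Fun^\diamond$, whereas the lemma (and the paper's proof, which argues for an arbitrary $Y:([m]\times[m]^{op})^{\geq\dgnl}\times N(\Fin_*)\to\cC$) asserts the genuineness of $\hat p_{\bullet;\cC}$ on the whole functor category; this is why the paper never invokes the Segal condition here. The paper's route for the elementary face/degeneracy cases is a direct combinatorial check that the $\delta_\sigma$-induced map on the explicit poset limits is an isomorphism for all $Y$ — the posets $\Gamma_{[k],\alpha,(\beta_\nu),\lambda}|_{(i,j)}$ and the modulating function $\kappa_{\beta,\lambda}$ are designed precisely so that this comparison is invertible without any hypothesis on $Y$ — and this is a different (and more general) mechanism than the one you propose.
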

\begin{proof}
It suffices to check the following: given a functor 
\begin{align*}
Y: ([m]\times [m]^{op})^{\geq \dgnl}\times N(\Fin_*)\rightarrow \cC
\end{align*}
and $\sigma: [n]\rightarrow [m]$ in $N(\Delta)$, the induced morphism from $\delta_\sigma$ on limits
\begin{align*}
&\varprojlim\limits_{(x,y)\in \Gamma_{[k],\alpha,(\beta_\nu),\lambda}|_{(i,j)}}Y(\sigma(x^+), \sigma(y);\beta_{[i,j]}^+(x\rightarrow (\lambda(j)-1)_+)^{-1}(u)\cup\{*\})\longrightarrow \\
&\varprojlim\limits_{(x,y)\in \Gamma_{[k],\alpha,(\beta_\nu),\lambda}|_{(i,j)}}Y(\sigma(x^+), \sigma(y); \sigma_*\beta_{[i,j]}^+(\hat{\sigma}_\lambda(x)\rightarrow (\sigma\circ\lambda(j)-1)_+)^{-1}(u)\cup\{*\})
\end{align*}
is an isomorphism, for all $0\leq i\leq j\leq k$ and  $u\in \alpha(j)^\circ$. Due to the functoriality on $\sigma$, one can factor $\sigma$ as a composition of face maps and degeneracy maps, and reduce to the cases when $\sigma$ is either a face map or a degeneracy map, which can be checked directly. 
\end{proof}

Thanks to the establishment of Lemma \ref{lemma: hat p, genuine}, we can apply the same argument as in Step 1 to show that the restriction of $\hat{p}_{\bullet, \cC}$ to $\Seq_\bullet(\Corr(\Fun^\diamond(N(\Fin_*));\cC)_{\inert, \all})$ factors through $\Maps_{(\OneCat)^{\Delta^{op}}_{/\Seq_\clubsuit (N(\Fin_*))}}(\Seq_\clubsuit(N(\Fin_*)\Gray [\bullet]), \Seq_\clubsuit(\bCorr(\cC)^{\otimes, \Fin_*}))$, and this defines $P_{\cC^\times,\bullet}$ (\ref{eq: P_C, bullet}). 

\begin{thm}\label{thm: appendix}
There are  canonically defined functors 
\begin{align}
\label{eq: thm append lax}&\Corr(\Fun^\diamond(N(\Fin_*), \cC^\times))_{\inert, \all}\longrightarrow \CAlg(\bCorr(\cC^\times))^{\rightlax},\\
\label{eq: thm append}&\Corr(\Fun^\diamond(N(\Fin_*), \cC^\times))_{\inert, \act}\longrightarrow \CAlg(\Corr(\cC^\times)).
\end{align}
\end{thm}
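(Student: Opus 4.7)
The plan is to leverage the functor $P_{\cC^\times, \bullet}$ of (\ref{eq: P_C, bullet}) already constructed in Subsection \ref{subsec: def canonical}, and to reinterpret its target as $\Seq_\bullet$ of the $(\infty,1)$-category $\CAlg(\bCorr(\cC^\times))^{\rightlax}$. This reinterpretation rests on the defining adjunction for the Gray tensor product: for any $(\infty,2)$-category $\mathbf{K}$, a functor $N(\Fin_*) \Gray [n] \to \mathbf{K}$ is the same data as an $n$-simplex in $\Fun(N(\Fin_*), \mathbf{K})^{\rightlax}$. After passing to $\Seq_\clubsuit$ and working over the base $\Seq_\clubsuit(N(\Fin_*))$, the target of $P_{\cC^\times, n}$ is therefore $\Seq_n$ of the full subcategory of $\Fun(N(\Fin_*), \bCorr(\cC^\times))^{\rightlax}$ spanned by those $\Fin_*$-diagrams satisfying the Segal conditions that cut out a commutative algebra in the Cartesian symmetric monoidal structure on $\bCorr(\cC^\times)$. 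The Segal condition is built into $\Fun^\diamond(N(\Fin_*), \cC^\times)$ by definition (see \cite[Theorem 2.6]{Jin}), and the conditions (i)--(iii) cutting out $\Seq_n(\Corr(\Fun^\diamond)_{\inert,\all})$ inside $\Maps(([n]\times[n]^{op})^{\geq \dgnl}, \Fun(N(\Fin_*), \cC^\times))$ are precisely those that force the output of $P_{\cC^\times,n}$ to land in this subspace. This yields the first functor (\ref{eq: thm append lax}).

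For the second functor (\ref{eq: thm append}), I would restrict the input to $\Corr(\Fun^\diamond(N(\Fin_*), \cC^\times))_{\inert, \act}$, where horizontal morphisms come from active morphisms of $\Fin_*$-diagrams, and verify that the restriction of $P_{\cC^\times,\bullet}$ now lands in strict (non-lax) commutative algebra homomorphisms, i.e.\ in $\Seq_\bullet(\CAlg(\Corr(\cC^\times)))$. In the concrete model of Subsection \ref{subsec: def canonical}, the right-lax character of the output is measured by the compatibility 2-cells along the horizontal edges of diagram (\ref{diagram: p_n, q_n}) involving $p_n^{(2)}$, which reflect how active maps in $\Fin_*$ interact with the $\Fin_*$-object structure. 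When the horizontal class is restricted to $\act$, the Cartesian condition (2.4.11) of \cite{Jin} on the horizontal morphisms forces these 2-cells to become equivalences, so the right-lax structure trivializes and the resulting algebra homomorphisms descend to the strict $\CAlg(\Corr(\cC^\times))$. At the level of the construction in Step 1 of Subsection \ref{subsec: def canonical}, this amounts to the statement that the natural base change morphisms produced by $\RKan_{q_n}$ along the active horizontal arrows become invertible, which is a direct consequence of the Cartesian property in the definition of $\act$.

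The main obstacle will be the first step: carefully comparing the two descriptions of $\Seq_n(\CAlg(\bCorr(\cC^\times))^{\rightlax})$---one via the intrinsic Gray tensor product model of $(\infty,2)$-categorical right-lax natural transformations as in \cite{GaRo}, the other via the explicit simplicial model over $\Seq_\clubsuit(N(\Fin_*))$ used to define $P_{\cC^\times,\bullet}$. Both sides fiber over the common base $\Seq_\clubsuit(N(\Fin_*))$, and one needs to show that the resulting map of fibers agrees with the standard one. This ultimately reduces to the compatibility already verified in Lemma \ref{lemma: hat p, genuine} (that $\hat{p}_{\bullet,\cC}$ is a genuine, not merely left-lax, natural transformation in $[\bullet]$), combined with basic functoriality of the Grothendieck construction. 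The remaining delicate bookkeeping is to ensure that the Segal and Cartesian conditions propagate correctly through $\RKan_{q_n}$, so that the essential image lands in $\CAlg$ rather than in the larger category of all $\Fin_*$-diagrams---this being the $(\infty,2)$-categorical analogue, in the presence of right-lax morphisms, of the commutative-algebra structure results already established for single objects in \cite[Theorems 2.6 and 2.11]{Jin}.
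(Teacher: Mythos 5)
Your proposal is correct and takes essentially the same approach as the paper. The theorem is, as you observe, a direct consequence of the construction of $P_{\cC^\times,\bullet}$: the paper devotes all of Section A.2 to building that functor, after which Theorem~A.3 is merely a restatement of the fact that its target, $\Maps_{(\OneCat)^{\Delta^{op}}_{/\Seq_\clubsuit(N(\Fin_*))}}(\Seq_\clubsuit(N(\Fin_*)\Gray[\bullet]), \Seq_\clubsuit(\bCorr(\cC^\times)^{\otimes,\Fin_*}))$, is precisely $\Seq_\bullet$ of $\CAlg(\bCorr(\cC^\times))^{\rightlax}$ via the Gray tensor product adjunction. Your identification of Lemma~\ref{lemma: hat p, genuine} as the crux—that $\hat{p}_{\bullet,\cC}$ is a genuine rather than merely left-lax natural transformation, which is what assembles the level-$n$ constructions into a simplicial object and hence a functor in $\OneCat$—matches the paper's logic. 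Your argument for the second functor (\ref{eq: thm append}), that restricting horizontal arrows to $\act$ forces the 2-cells to invertibility via the Cartesian condition (2.4.11) of \cite{Jin}, is not spelled out in the paper (which passes over the second functor without comment), but it is the right mechanism and is consistent with how the analogous strictification works in \cite[Theorem 2.11]{Jin}.
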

For any $c\in \cC$, let $\Seg(c)$ denote for the category of Segal objects $C^\bullet$ in $\cC$ with $C^0\simeq c$. An immediate corollary of an associative version of the above theorem is 
\begin{cor}[Chapter 9, Corollary 4.4.5\cite{GaRo}]
There is a canonically defined functor
\begin{align*}
\Seg(c)\longrightarrow \Alg(\Corr(\cC)).
\end{align*}
\end{cor}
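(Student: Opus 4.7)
The plan is to read off the two functors directly from the canonically defined $P_{\cC^\times,\bullet}$ constructed in Subsection \ref{subsec: def canonical}. Recall that by general principles (following the conventions laid out in \cite{GaRo} for $(\infty,2)$-categories via the Gray tensor product), an $n$-simplex of $\CAlg(\bCorr(\cC^\times))^{\rightlax}$ is the same datum as a morphism $\Seg(N(\Fin_*) \Gray [n]) \longrightarrow \Seg(\bCorr(\cC^\times)^{\otimes,\Fin_*})$ in $(\OneCat)^{\Delta^{op}}_{/\Seq_\clubsuit(N(\Fin_*))}$, once one imposes the Segal-type conditions on the target that characterize commutative algebra objects together with their right-lax homomorphisms. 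This is precisely the form of the codomain of $P_{\cC^\times,n}$.

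First I would unpack this identification, verifying that the target $\Seq_\clubsuit(\bCorr(\cC^\times)^{\otimes,\Fin_*})$ encoding the category of commutative algebras of $\bCorr(\cC^\times)$ with right-lax homomorphisms is precisely the one appearing in the codomain of $P_{\cC^\times,n}$. With this in hand, we would post-compose $P_{\cC^\times,\bullet}$ with the obvious inclusion of the relevant $\Seg$-subobject to obtain
\begin{align*}
\Seq_\bullet(\Corr(\Fun^\diamond(N(\Fin_*),\cC^\times))_{\inert,\all}) \longrightarrow \Seq_\bullet(\CAlg(\bCorr(\cC^\times))^{\rightlax}).
\end{align*}
The Segal conditions on the target hold automatically: the underlying commutative algebra structure at each vertex follows from \cite[Theorem 2.6]{Jin} (guaranteed by membership in $\Fun^\diamond$), while the compositional coherence of right-lax homomorphisms along each edge follows from condition (i) at the top of Subsection \ref{subsec: def canonical} together with the construction of $F_{([k],\alpha,(\beta_i),\lambda)}$ in \eqref{F_[k],alpha} and the right Kan extension along $q_n$. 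This yields the functor \eqref{eq: thm append lax}.

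Second, to obtain \eqref{eq: thm append}, I would restrict to the subspace $\Corr(\Fun^\diamond(N(\Fin_*),\cC^\times))_{\inert,\act}$ and show that the resulting functor takes values in the subcategory of strict (non-lax) commutative algebra homomorphisms. The point is that the obstruction for a right-lax morphism in $\CAlg(\bCorr(\cC^\times))^{\rightlax}$ to be strict is controlled by the failure of certain diagrams to be Cartesian; these are exactly the diagrams (2.4.11) in \cite{Jin} whose Cartesianness defines $\act$. Concretely, under the $\act$ assumption, the natural transformation $\delta_\sigma$ in \eqref{diagram: sigma, NT}, when passed through the right Kan extension along $q^\bullet$ as in \eqref{eq: hat p_C}, becomes an equivalence. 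This upgrade from a genuine natural transformation (Lemma \ref{lemma: hat p, genuine}) to an equivalence of natural transformations is what promotes lax to strict, producing the functor into $\CAlg(\Corr(\cC^\times))$.

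The main technical obstacle will be the bookkeeping needed to check that the functors $P_{\cC^\times,n}$, once corestricted to the Segal subobjects, do assemble into a well-defined simplicial map, which amounts to verifying compatibility of the joint-node-removal convention \eqref{F_[k],alpha} with both inert and active morphisms in $N(\Delta)$, and upgrading Lemma \ref{lemma: hat p, genuine} so that the limits taken at each $(i,j)$ indeed compute compositions of correspondences rather than a larger colimit. Once these compatibilities are established at the level of $1$-categories, the passage to $\infty$-categorical statements is formal, and the associative version (and hence the corollary on Segal objects) follows by the identical argument with $N(\Fin_*)$ replaced by its associative analogue.
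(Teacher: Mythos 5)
Your overall plan aligns with the paper: the corollary is stated to follow immediately from an associative version of Theorem \ref{thm: appendix}, and your first and last paragraphs trace the construction of $P_{\cC^\times,\bullet}$ from Subsection \ref{subsec: def canonical} essentially as the paper does (modulo a recurring typo writing $\Seg$ where $\Seq_\clubsuit$ is meant). However, your second paragraph --- which is what produces the \emph{strict} functor \eqref{eq: thm append} that the corollary actually uses, since the target $\Alg(\Corr(\cC))$ is the $1$-categorical one --- conflates two independent layers of lax-structure in a way that constitutes a genuine gap.

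You claim that under the $\act$ hypothesis the natural transformation $\delta_\sigma$ from \eqref{diagram: sigma, NT}, after pushing through the right Kan extension along $q^\bullet$, becomes an equivalence, and that this is what promotes lax to strict. But Lemma \ref{lemma: hat p, genuine} already establishes that $\hat{p}_{\bullet,\cC}$ is a genuine (not merely left-lax) natural transformation, with \emph{no} $\act$ hypothesis whatsoever; this is required just to make $P_{\cC^\times,\bullet}$ into a well-defined simplicial map over the $(\inert,\all)$ domain, and it feeds into \eqref{eq: thm append lax} as much as \eqref{eq: thm append}. The laxness measured by $\delta_\sigma$ lives in the simplicial direction $[n]\in N(\Delta)^{op}$. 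The laxness separating $\CAlg(\bCorr(\cC^\times))^{\rightlax}$ from $\CAlg(\Corr(\cC^\times))$ lives instead in the target $(\infty,2)$-category: it is about whether the $2$-morphisms in $\bCorr(\cC^\times)$ appearing in the image algebra-homomorphism data are invertible, and this is forced precisely by the Cartesianness condition (2.4.11) defining $\act$ on the horizontal legs of the correspondences in the domain. These two notions of laxness have nothing to do with one another, so the step as written does not produce \eqref{eq: thm append}. As a secondary point, the corollary also relies on the (small but necessary) observation that $\Seg(c)$ embeds into the associative analogue of $\Corr(\Fun^\diamond(-,\cC))_{\inert,\act}$; your sketch leaves this step implicit.
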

It is natural to expect that the canonical functors (\ref{eq: thm append lax}) and (\ref{eq: thm append}) are  equivalences. Since we will not use this in this paper, we will pursue it in another note. 

Recall the definition $N(\Fin_{*,\dagg})$ in \cite{Jin} as the ordinary $1$-category whose objects are finite sets of the form $\lng n\rng_\dagg=\lng n\rng\sqcup\{\dagg\}$ and whose morphisms $\lng n\rng_\dagg\rightarrow \lng m\rng_\dagg$ are those maps between finite sets that send $*$ to $*$ and $\dagg$ to $\dagg$. There is a natural functor $\pi_\dagg: N(\Fin_{*,\dagg})\rightarrow N(\Fin_*)$, and let  $I_{\pi_\dagg}$ be the coCartesian fibration over $\Delta^1$ by viewing $\pi_\dagg$ as a functor from $\Delta^1$ to $\OneCat^\ord$. 
By the almost same proof for Theorem \ref{thm: appendix}, one gets the following theorem. 
\begin{thm}\label{thm: appendix mod}
There are canonically defined functors 
\begin{align}
\label{eq: thm append lax mod}&\Corr(\Fun^\diamond(I_{\pi_\dagg}, \cC^\times))_{\inert, \all}\longrightarrow \cMod^{N(\Fin_*)}(\bCorr(\cC^\times))^{\rightlax},\\
\label{eq: thm append mod}&\Corr(\Fun^\diamond(I_{\pi_\dagg}, \cC^\times))_{\inert, \act}\longrightarrow \cMod^{N(\Fin_*)}(\Corr(\cC^\times)).
\end{align}
\end{thm}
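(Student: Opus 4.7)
The plan is to run a module-theoretic variant of the construction of $P_{\cC^\times, \bullet}$ in Subsection \ref{subsec: def canonical}, with $N(\Fin_*)$ systematically replaced by the coCartesian fibration $I_{\pi_\dagg}\rightarrow\Delta^1$. Concretely, I will build a functor
\begin{align*}
P^{\mathrm{mod}}_{\cC^\times,\bullet}:\ \Seq_\bullet(\Corr(\Fun^\diamond(I_{\pi_\dagg},\cC^\times))_{\inert,\all})\longrightarrow \Maps_{(\OneCat)^{\Delta^{op}}_{/\Seq_\clubsuit(I_{\pi_\dagg})}}(\Seq_\clubsuit(I_{\pi_\dagg}\Gray[\bullet]),\ \Seq_\clubsuit(\bCorr(\cC^\times)^{\otimes,\Fin_{*,\dagg}})),
\end{align*}
whose adjoint form gives (\ref{eq: thm append lax mod}); restricting to the active class of horizontal arrows and noting that the right-lax structure becomes strict will yield (\ref{eq: thm append mod}). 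The input data at a fixed $n$-simplex of $\Corr(\Fun^\diamond(I_{\pi_\dagg},\cC^\times))_{\inert,\all}$ is a functor $([n]\times[n]^{op})^{\geq\dgnl}\rightarrow \Fun(I_{\pi_\dagg},\cC)$ satisfying pair-versions of conditions (i)--(iii) from Subsection \ref{subsec: def canonical}: Cartesianness of the squares, the module/algebra conditions of \cite[Theorem 2.21]{Jin} at each vertex (applied to the restrictions to $\{0\},\{1\}\subset\Delta^1$), and inertness of the vertical arrows in both the algebra and module strands.

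The combinatorial bookkeeping proceeds as follows. For any $1$-category $J$ and $([k],\alpha,(\beta_i),\lambda)\in I_{J\Gray[n]}$, the cell $\Gamma_{([k],\alpha,(\beta_i),\lambda)}$ of (\ref{eq
: pi_[k], alpha}) is defined purely in terms of the poset structure of $[n]$ and hence applies verbatim with $J=I_{\pi_\dagg}$. The ordinary category $T^\Comm$ must be replaced by the module analog $T^{\Comm,\dagg}$ with objects $(\lng n\rng_\dagg, j)$ where $j\in\lng n\rng^\circ\cup\{\dagg\}$, and a morphism $(\lng n\rng_\dagg,j)\rightarrow(\lng m\rng_\dagg,k)$ is a map $\alpha:\lng m\rng_\dagg\rightarrow\lng n\rng_\dagg$ in $\Fin_{*,\dagg}$ with $\alpha(k)=j$; this sits over $N(\Fin_{*,\dagg})^{op}$ and, combined with the canonical projection $I_{\pi_\dagg}\rightarrow\Delta^1$, lets me build a Cartesian fibration $\pi_{\cT^{+,\dagg},n}:\cT^{+,\dagg}_{I_{\pi_\dagg}\Gray[n]}\rightarrow I_{I_{\pi_\dagg}\Gray[n]}$ and the pair $(p_n,q_n)$ exactly as in (\ref{diagram: p_n, q_n}). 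The formula for $p_n^{(2)}$ on the algebra strand over $\{0\}$ reproduces the commutative algebra version, while on the module strand over $\{1\}$ it encodes the $\Fin_{*,\dagg}$-module structure by sending morphisms projecting to a vertical arrow to the active morphism in $\Fin_{*,\dagg}$ that acts on the module component and sending morphisms projecting to horizontal arrows to the inert projections (cf.\ the proof of \cite[Theorem 2.21]{Jin}). Composition with $\circ\, p_n$ followed by $\RKan_{q_n}$, together with an easy check that the output lands in $\Seq_\clubsuit(\bCorr(\cC^\times)^{\otimes,\Fin_{*,\dagg}})$, furnishes $P^{\mathrm{mod}}_{\cC^\times,n}$.

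Functoriality in $[n]$ is settled by the module analog of Step 2. For $\sigma:[n]\rightarrow[m]$ in $N(\Delta)$, the pushforward $\sigma_*\beta_{i;I_{\pi_\dagg}}$ is defined by the same formula with $r_i(s)$ unchanged (this uses only the target's $1$-categorical structure, not that it is $N(\Fin_*)$), giving the map (\ref{eq: Seq, sigma}) for $J=I_{\pi_\dagg}$. Diagram (\ref{diagram: q_n, f_T+}) then produces $f_{\cT^{+,\dagg};\sigma}$ and the left-lax natural transformation $p_{N(\Delta)^{op};\cC}^{\mathrm{mod}}$ of (\ref{eq: delta, Fin_*}). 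The key verification is the module analog of Lemma \ref{lemma: hat p, genuine}, namely that after passing to right Kan extensions along the $q^\bullet$'s, the induced comparison morphisms on limits are isomorphisms; this reduces by functoriality to the cases where $\sigma$ is an elementary face or degeneracy map, and in each such case the argument is the same straightforward computation as in the commutative case, with the $\dagg$-strand behaving identically because it merely adds a distinguished basepoint $\dagg$ that is preserved throughout.

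The main potential obstacle is managing the bookkeeping at the interface between the algebra strand and the module strand of $I_{\pi_\dagg}$: one must verify that the inert/active splitting of morphisms in $N(\Fin_{*,\dagg})$ interacts correctly with both the Cartesian fibration $\pi_{\cT^{+,\dagg},n}$ and the $(p_n^{(1)},p_n^{(2)})$ pair, and that the right Kan extensions respect the module-over-algebra structure, yielding values in $\cMod^{N(\Fin_*)}(\bCorr(\cC^\times))^{\rightlax}$ rather than a merely pair-valued functor. This amounts to reproducing, at the level of the present $P_{\cC^\times,\bullet}$ formalism, the concrete module-versus-algebra check of \cite[Theorem 2.21 and Proposition 2.22]{Jin}, which in our setup becomes an essentially formal consequence of the analog of \cite[Lemma 2.10]{Jin}. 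Once this is established, specializing to morphisms whose horizontal component lies in $\act$ shows that the right-lax structure strictifies, giving (\ref{eq: thm append mod}).
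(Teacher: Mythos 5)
Your proposal is correct and takes essentially the same approach as the paper: the paper's own justification for Theorem \ref{thm: appendix mod} is the single sentence ``By the almost same proof for Theorem \ref{thm: appendix}, one gets the following theorem,'' and your write-up is a faithful elaboration of precisely that replacement $N(\Fin_*)\leadsto I_{\pi_\dagg}$, $T^\Comm\leadsto T^{\Comm,\dagg}$, and \cite[Theorem 2.6]{Jin} $\leadsto$ \cite[Theorem 2.21]{Jin}, including the correct identification of where the inert/active interface between the algebra and module strands needs checking.
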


\section{A list of notations and conventions for categories}\label{sec: notations}
We list a few notations of categories that we use, mostly following the literatures \cite{GaRo} and \cite{higher-algebra}. 
\begin{itemize}
\item A category in boldface means it is an $(\infty,2)$-category, and its underlying $(\infty,1)$-category (by removing all non-invertible 2-morphisms) is denoted by the same notation but not in boldface. 

\item $\OneCat$ (resp. $\TwoCat$): the $(\infty,1)$-category of $(\infty,1)$-categories (resp. $(\infty,2)$-categories); $\bOneCat$: the $(\infty,2)$-category of $(\infty,1)$-categories; $\bOneCat^\ord$: the full subcategory of $\bOneCat$ consisting of ordinary 1-categories, i.e. the morphism spaces are weakly homotopy equivalent to discrete sets; $\bPrstL$ (resp. $\bPrstR$): the $(\infty, 2)$-category of presentable stable $(\infty,1)$-categories with colimit-preserving (resp. limit-preserving) functors.  

\item $\bCorr(\cC)_{vert, horiz}^{adm}$: the $(\infty,2)$-category of correspondences for an $\infty$-category $\cC$ with the vertical (resp. horizontal) arrow in a correspondence (i.e. a 1-morphism) in the class $vert$ (resp. $horiz$) of morphisms in $\cC$, and 2-morphisms in the class $adm$ of morphisms in $\cC$. 

\item $\Slch$: the ordinary 1-category of locally compact Hausdorff spaces; $\fib$ (resp. $\propmap$, $\openmap$): the class of morphisms in $\Slch$ of locally trivial fibrations (resp. proper maps, open embeddings); $\Spc$: the $(\infty,1)$-category of topological spaces (equivalently $\infty$-groupoids). 

\item $\CAlg(\mathbf{C}^\otimes)^{\rightlax}$ (resp. $\CAlg(C^\otimes)$
): the $(\infty,1)$-category of commutative algebra objects in a symmetric monoidal $(\infty,2)$-category $\bC$ (resp. $(\infty,1)$-category $C$), with right-lax (resp. genuine) homomorphisms; $\cMod^{N(\Fin_*)}(\mathbf{C}^\otimes)^{\rightlax}$ (resp. $\cMod^{N(\Fin_*)}(C^\otimes)$): similar as before but with commutative algebras replaced by pairs of a commutative algebra and its module. 

\item $\Seq_\bullet: \TwoCat\rightarrow (\OneCat)^{\Delta^{op}}$: the full embedding of $\TwoCat$ into simplicial $\infty$-categories; $\Delta^n$ and $[n]$ both mean the ordinary 1-category $0\rightarrow 1\cdots\rightarrow n$ (though $\Delta^n$ usually means the nerve of $[n]$, but we do not distinguish them here). 

\item $\Shv(X;\Sp)=\Shv(X;\bS)$: both mean the category of sheaves on $X$ valued in spectra. 

\end{itemize}


\begin{thebibliography}{s1}

\bibitem[AbKr]{AbKr} M. Abouzaid and T. Kragh, ``On the immersion classes of nearby Lagrangians," J. Topol. {\bf 9} (2016)

\bibitem[CJS]{CJS} R. Cohen, J. Jones, G. Segal, ``Floer's infinite-dimensional Morse theory and homotopy theory,'' in The Floer memorial volume, 297-325 (1995)

\bibitem[GaRo]{GaRo} D. Gaitsgory and N. Rozenblyum, ``A study in derived algebraic geometry, Vol. I. Correspondences and duality." Mathematical Surveys and Monographs, {\bf 221}. American Mathematical Society (2017).

\bibitem[GMac]{stratified-morse-theory} M. Goresky and R. MacPherson, ``Stratified Morse theory,'' Ergebnisse der Mathematik und ihrer Grenzgebiete (3) [Results in Mathematics and Related Areas (3)], {\bf 14}. Springer-Verlag, Berlin, (1988)

\bibitem[Gui]{Guillermou} S. Guillermou, ``Qunatization of conic Lagrangian submanifolds of cotangent bundles.''  {\sf arXiv:1212.5818}

\bibitem[GKS]{GKS} S. Guillermou, M. Kashiwara, and P. Schapira, ``Sheaf quantization of Hamiltonian isotopies and applications to non displaceability problems," Duke Math. Journal Volume 161, Number {\bf 2} (2012), 201-245

\bibitem[Har]{Harris} B. Harris, ``Bott periodicity via simplicial spaces," {\it J. Algebra} 62 (1980), no. {\bf 2}, 450--454

\bibitem[Jin]{Jin} X. Jin, ``A Hamiltonian $\coprod\limits_n BO(n)$-action, stratified Morse theory and the $J$-homomorphism, " {\sf arXiv: 1902.06708}

\bibitem[JiTr]{JiTr} X. Jin and D. Treumann, ``Brane structures in microlocal sheaf theory," {\sf arXiv: 1704.04291}

\bibitem[KaSc]{Kashiwara} M. Kashiwara, P. Schapira. 
Sheaves on manifolds.
{\it Grundlehren der mathematischen Wissenschaften} {\bf 292}, Springer-Verlag,  Berlin, 1990.


\bibitem[Lu1]{higher-topos} J. Lurie, ``Higher topos theory," Annals of Mathematics Studies, {\bf 170}. Princeton University Press (2009)

\bibitem[Lu2]{higher-algebra} J. Lurie, ``Higher algebra,'' {\sf http://www.math.harvard.edu/~lurie/papers/HA.pdf}


\bibitem[Nad]{Nadler2} D. Nadler.
Microlocal branes are constructible sheaves.
Selecta Math. (N.S.) {\bf 15} (2009), no. 4, 563--619.

\bibitem[NaSh]{NaSh} D. Nadler, V. Shende. ``Sheaf quantization in Weinstein symplectic manifolds," {\sf arXiv:2007.10154}

\bibitem[NaZa]{NZ} D. Nadler, E. Zaslow.
Constructible sheaves and the Fukaya category.
 J. Amer. Math. Soc. {\bf 22} (2009), 233--286.


\bibitem[Qui]{Quillen} D. Quillen, ``On the group completion of a simplicial monoid," Mem. Amer. Math. Soc. {\bf 110} (1994), no. 529, 89--105

\bibitem[Seg68]{Segal} G. Segal, ``Classifying spaces and spectral sequences," Inst. Hautes $\acute{\text{E}}$tudes Sci. Publ. Math. No. {\bf 34}, 1968, 105--112. 

\bibitem[Seg74]{Segal2} G. Segal, ``Categories and cohomology theories," Topology Vol. {\bf 13} pp. 293-312. 

\bibitem[Sei]{Seidel} P. Seidel. ``Fukaya categories and Picard-Lefschetz Theory,"
Zurich Lectures in Advanced Mathematics, 
EMS (2008).

\bibitem[She]{Shende} V. Shende, ``Microlocal category for Weinstein manifolds via h-principle," {\sf arXiv:1707.07663}

\bibitem[Tam1]{Tamarkin1} D. Tamarkin ``Microlocal criterion for nondisplaceability,'' in Algebraic and analytic microlocal analysis, edited by M. Hitrik et al., Springer Proceedings in Mathematics and Statistics 269, 99--223 (2018) {\sf arXiv:0809.1584}


































\end{thebibliography}
\end{document}